\theoremstyle{article}%
\newtheorem{theorem}{Theorem}[section]
\newtheorem{proposition}[theorem]{Proposition}%
\theoremstyle{thmstyletwo}%
\theoremstyle{thmstylethree}%
\newtheorem{corollary}{Corollary}[section]
\newtheorem{lemma}{Lemma}[section]
\begin{document}

\title[On certain harmonic zeta functions]{On certain harmonic zeta functions}


\author[1]{\fnm{M\"{u}m\"{u}n} \sur{Can}}\email{mcan@akdeniz.edu.tr}

\author[1]{\fnm{Levent} \sur{Karg\i n}} \email{lkargin@akdeniz.edu.tr}

\author[1]{\fnm{Mehmet} \sur{Cenkci}} \email{cenkci@akdeniz.edu.tr}

\author[1]{\fnm{Ayhan} \sur{Dil}} \email{adil@akdeniz.edu.tr}

\affil[1]{\orgdiv{Department of Mathematics}, \orgname{Akdeniz University}, \orgaddress{\city{Antalya}, \postcode{07058}, \country{Turkey}}}


\abstract{This study deals with certain harmonic zeta functions, one of them occurs in
the study of the multiplication property of the harmonic Hurwitz zeta
function. The values at the negative even integers are found and Laurent
expansions at poles are described. Closed-form expressions are derived for the
Stieltjes constants that occur in Laurent expansions in a neighborhood of
$s=1$. Moreover, as a bonus, it is obtained that the values at the positive
odd integers of three harmonic zeta functions can be expressed in closed-form
evaluations in terms of zeta values and log-sine integrals.}

\keywords{Stieltjes constant, Euler sum, Zeta values, Harmonic numbers, Laurent expansion, Bernoulli numbers}


\pacs[MSC Classification]{11Y60, 11M41, 11M06, 11B83, 30B40, 11B68}

\maketitle

\section{Introduction}

The Hurwitz zeta function $\zeta\left(  s,a\right)  $ defined by the series%
\[
\zeta\left(  s,a\right)  =\sum_{n=0}^{\infty}\frac{1}{\left(  n+a\right)
^{s}},\text{ }\operatorname{Re}\left(  s\right)  >1,
\]
$a\in\mathbb{C}\setminus\left\{  0,-1,-2,\ldots\right\}  $ has a meromorphic
continuation to the complex $s$-plane with simple pole at $s=1$. It is well
known that around this simple pole, $\zeta\left(  s,a\right)  $ has the
following Laurent expansion:
\[
\zeta\left(  s,a\right)  =\frac{1}{s-1}+\sum_{m=0}^{\infty}\left(  -1\right)
^{m}\frac{\gamma\left(  m,a\right)  }{m!}\left(  s-1\right)  ^{m},
\]
where the coefficients $\gamma\left(  m,a\right)  $ are called generalized
Stieltjes constants. In particular, for $a=1$, $\zeta\left(  s,1\right)
=\zeta\left(  s\right)  $ is the Riemann zeta function and $\gamma\left(
m,1\right)  =\gamma\left(  m\right)  $ are the Stieltjes constants (see for
example \cite{BCh,LiT}). The constant $\gamma\left(  0\right)  $ is the
famous Euler-Mascheroni constant: $\gamma=\gamma\left(  0\right)=0.577\,215\,664\,9\ldots$. It is shown that these constants can be presented
by the limit%
\[
\gamma\left(  m,a\right)  =\lim_{N\rightarrow\infty}\left(  \sum_{n=1}%
^{N}\frac{\log^{m}\left(  n+a\right)  }{n+a}-\frac{\log^{m+1}\left(
N+a\right)  }{m+1}\right)
\]
(see \cite{Be}). There is comprehensive literature on deriving series and
integral representations, which usually allow a more accurate estimation, for
the Stieltjes constants and their extensions (see for example
\cite{Ad,Be,Bl1,Bl2,CDK,CDKCG,Che,Ch,Cof1,Cof2,C,FB,KDCC,SK}).

The harmonic zeta function, the Dirichlet series associated with the harmonic
numbers $H_{n}=1+1/2+\cdots+1/n$, is defined by
\[
\zeta_{H}\left(  s\right)  =\sum_{k=1}^{\infty}\frac{H_{k}}{k^{s}},\text{
}\operatorname{Re}\left(  s\right)  >1,
\]
and subject to many studies. Euler \cite[pp. 217--264]{E} has shown that
special values of the harmonic zeta function have relations to those of the
Riemann zeta values. Apostol and Vu \cite{AV}, and Matsuoka \cite{Ma} have
shown that the function $\zeta_{H}$ can be holomorphically continued to the
whole $s$-plane except for the poles at $s=1$, $s=0$ and $s=1-2j,$
$j\in\mathbb{N}$. Later, Boyadzhiev et al. \cite{BGP1} and Candelpergher and
Coppo \cite{CC} deal with the Laurent expansion of the form\textbf{ }%
\begin{equation}
\zeta_{H}\left(  s\right)  =\frac{a_{-1}}{s-b}+a_{0}+O\left(  s-b\right)
,\label{zhL}%
\end{equation}
and give explicitly $a_{0}$ when $b=0$ and $b=1-2j$, $j\in\mathbb{N}$.
Besides, in \cite{CC} and \cite{CDK}, it has been recorded that the harmonic
Stieltjes constants $\gamma_{H}\left(  m\right)  $ can be expressed as%
\[
\gamma_{H}\left(  m\right)  =\lim_{x\rightarrow\infty}\left(  \sum_{n\leq
x}\frac{H_{n}\ln^{m}n}{n}-\frac{\ln^{m+2}x}{m+2}-\gamma\frac{\ln^{m+1}x}%
{m+1}\right)
\]
and $\gamma_{H}\left(  0\right)  =\left(  \zeta\left(  2\right)  +\gamma
^{2}\right)  /2$, where $\gamma_{H}\left(  m\right)  $ are defined by
\[
\zeta_{H}\left(  s\right)  =\frac{1}{\left(  s-1\right)  ^{2}}+\frac{\gamma
}{s-1}+\sum_{m=0}^{\infty}\left(  -1\right)  ^{m}\frac{\gamma_{H}\left(
m\right)  }{m!}\left(  s-1\right)  ^{m},\text{ }0<\left\vert s-1\right\vert
<1.
\]
It is worth mentioning that, in Alkan \cite{Al}, certain real numbers and
log-sine integrals have been shown as a combination of special values of the
harmonic zeta function and the Riemann zeta function, which provided strong
approximations for them. Recently, Alzer and Choi \cite{AC} have introduced
four types of parametric Euler sums, namely%
\begin{align*}
S_{z,s}^{++}\left(  a,b\right)   &  =\sum_{n=1}^{\infty}\frac{\mathcal{H}%
_{n}^{\left(  z\right)  }\left(  a\right)  }{\left(  n+b\right)  ^{s}},\text{
}S_{z,s}^{+-}\left(  a,b\right)  =\sum_{n=1}^{\infty}\left(  -1\right)
^{n+1}\frac{\mathcal{H}_{n}^{\left(  z\right)  }\left(  a\right)  }{\left(
n+b\right)  ^{s}},\\
S_{z,s}^{-+}\left(  a,b\right)   &  =\sum_{n=1}^{\infty}\frac{\mathcal{A}%
_{n}^{\left(  z\right)  }\left(  a\right)  }{\left(  n+b\right)  ^{s}},\text{
}S_{z,s}^{--}\left(  a,b\right)  =\sum_{n=1}^{\infty}\left(  -1\right)
^{n+1}\frac{\mathcal{A}_{n}^{\left(  z\right)  }\left(  a\right)  }{\left(
n+b\right)  ^{s}},
\end{align*}
where%
\[
\mathcal{H}_{n}^{\left(  z\right)  }\left(  a\right)  =\sum_{k=1}^{n}\frac
{1}{\left(  k+a\right)  ^{z}}\text{ and }\mathcal{A}_{n}^{\left(  z\right)
}\left(  a\right)  =\sum_{k=1}^{n}\frac{\left(  -1\right)  ^{k-1}}{\left(
k+a\right)  ^{z}},
\]
$a,b\in\mathbb{C}\backslash\left\{  -1,-2,-3,\ldots\right\}  $ and $s,$
$z\in\mathbb{C}$ are adjusted so that the involved defining series converge.
They have investigated analytic continuations via summation formulas and given
shuffle relations. Very recently, in \cite{KDCC}, we have studied the
harmonic Hurwitz zeta function
\begin{equation}
\zeta_{H}\left(  s,a\right)  :=S_{1,s}^{++}\left(  a-1,a-1\right)  =\sum
_{n=0}^{\infty}\frac{H_{n}\left(  a\right)  }{\left(  n+a\right)  ^{s}%
},\label{hhz}%
\end{equation}
where $a\in\mathbb{C}\backslash\left\{  0,-1,-2,-3,\ldots\right\}  $,
$\operatorname{Re}(s)>1$ and $H_{n}\left(  a\right)  =\sum_{k=0}^{n}1/(k+a)$.
Among others, they have investigated analytic continuation via contour
integral and introduced a limit presentation for the constants $\gamma
_{H}\left(  m,a\right)  $ defined by
\begin{equation}
\zeta_{H}\left(  s,a\right)  =\frac{1}{\left(  s-1\right)  ^{2}}+\frac
{\gamma\left(  0,a\right)  }{s-1}+\sum\limits_{n=0}^{\infty}\left(  -1\right)
^{n}\frac{\gamma_{H}\left(  n,a\right)  }{n!}\left(  s-1\right)  ^{n},\text{
}0<\left\vert s-1\right\vert <1\label{hzetaL}%
\end{equation}
and closed-form expressions for $\gamma_{H}\left(  m,1\right)  $ and
$\gamma_{H}\left(  m,1/2\right)  $ in terms of certain constants and integrals.

We note that $H_{n}=H_{n-1}\left(  1\right)  =\mathcal{H}_{n}^{\left(
1\right)  }\left(  0\right)  $, $\mathcal{H}_{n}^{\left(  1\right)  }\left(
-\frac{1}{2}\right)  =2O\left(  n\right)  =2\sum_{k=1}^{n}\frac{1}{2k-1}$,
$\mathcal{A}_{n}^{\left(  1\right)  }\left(  0\right)  =H_{n}^{-}=\sum
_{k=1}^{n}\frac{\left(  -1\right)  ^{k-1}}{k}$, the skew harmonic numbers, and
$H_{n-1}\left(  a\right)  =\mathcal{H}_{n}^{\left(  1\right)  }\left(
a-1\right)  $. It is obvious that $S_{1,s}^{++}\left(  0,0\right)  $ is the
harmonic zeta function $\zeta_{H}\left(  s\right)  $, and refer to
\cite{Ba,BoBB,CKDS,EL,J,KCDC,LiQ,NKS,QSL,So,SoC,WX} for studies on several types of Euler sums. In particular, analytic continuations of $S_{1,s}^{+-}\left(
0,0\right)  =\eta_{H}\left(  s\right)  $, $S_{1,s}^{--}\left(  0,0\right)
=\eta_{H^{-}}\left(  s\right)  $ and $S_{1,s}^{-+}\left(  0,0\right)
=\zeta_{H^{-}}\left(  s\right)  $ have been investigated by Boyadzhiev et al.
in \cite{BGP2}. It is known that the values $\zeta_{H}\left(  m\right)  $ (cf.
\cite{E}), $\eta_{H}\left(  2m\right)  $, $\eta_{H^{-}}\left(  2m\right)  $
(cf. \cite{Si} or \cite[Theorem 7.1]{FlS}), $S_{1,m}^{++}\left(  0,\frac
{1}{2}\right)  $ (cf. \cite[Theorem 1]{SoCv}), $\zeta_{O}\left(  2m\right)  $
and $\zeta_{H}\left(  2m,1/2\right)  $ (cf. \cite[Eqs. (8a) and (9a)]{J}) have
closed-form evaluation formulas in terms of zeta values. In contrast to that
for integers $m\geq1$, the values $\eta_{H}\left(  2m+1\right)  $,
$\eta_{H^{-}}\left(  2m+1\right)  $, $\zeta_{O}\left(  2m+1\right)  $ and
$\zeta_{H}\left(  2m+1,1/2\right)  $ do not directly admit any closed-form
evaluations in terms of other well-known constants. With this one, Alkan
\cite{Al} shows that $\zeta_{O}\left(  m\right)  $ can always be written as a
combination of log-sine integrals over $\left[  0,2\pi\right]  $ and $\left[
0,\pi\right]  $:
\begin{equation}
\hspace{-0.13in}%
\begin{array}
[c]{l}%
\zeta_{O}\left(  m\right)  =\mathbb{Q}\pi^{m+1}+\sum\limits_{v=1}^{\left(
m-1\right)  /2}\mathbb{Q}\left(  \pi\right)  J_{2v+1,2\pi}+\sum\limits_{v=1}%
^{m-1}\mathbb{Q}\left(  \pi\right)  J_{v,\pi}\text{, }m\in2\mathbb{N}%
+1\text{,}\\
\zeta_{O}\left(  m\right)  =\mathbb{Q}\pi^{m+1}+\sum\limits_{v=1}^{\left(
m-1\right)  /2}\mathbb{Q}\left(  \pi\right)  I_{2v+1,2\pi}+\sum\limits_{v=1}%
^{m-1}\mathbb{Q}\left(  \pi\right)  I_{v,\pi}\text{, }m\in2\mathbb{N}\text{,}%
\end{array}
\label{ozetam}%
\end{equation}
where $\mathbb{Q}\left(  \pi\right)  $ denotes the totally transcendental
extension obtained by adjoining $\pi$ to $\mathbb{Q}$, and%
\[
I_{m,x}=\int\limits_{0}^{x}t^{m}\log\left(  2\sin\frac{t}{2}\right)  dt\text{
and }J_{m,x}=\int\limits_{0}^{x}t^{m}\log^{2}\left(  2\sin\frac{t}{2}\right)
dt.
\]

\subsection{Outline of the study}

In this study, we first consider the harmonic-type zeta function%
\[
\zeta_{A\left(  k\right)  }\left(  s\right)  =\sum\limits_{n=1}^{\infty}%
\frac{A_{n}\left(  k\right)  }{n^{s}}\text{,}%
\]
where the numbers $A_{n}\left(  k\right)  $ are defined by the generating
function%
\begin{equation}
-\frac{\ln\left(  1-x\right)  }{1-x^{k}}=\sum\limits_{n=1}^{\infty}%
A_{n}\left(  k\right)  x^{n}. \label{gAnk}%
\end{equation}
The function $\zeta_{A\left(  k\right)  }$ occurs in the study of the
multiplication property of the harmonic Hurwitz zeta function $\zeta
_{H}\left(  s,a\right)  $:

\begin{proposition}
\label{prop-raabe}For $k\in\mathbb{N}$, we have%
\begin{equation}
\sum\limits_{a=1}^{k}\zeta_{H}\left(  s,\frac{a}{k}\right)  =k^{s+1}%
\zeta_{A\left(  k\right)  }\left(  s\right)  . \label{Raabe}%
\end{equation}

\end{proposition}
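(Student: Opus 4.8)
The plan is to unfold both sides into their defining series and reduce the identity to a combinatorial coincidence of index sets, sealed off by the generating function \eqref{gAnk}. Working first in the region $\operatorname{Re}(s)>1$, I would rewrite a single Hurwitz term. Since $(n+a/k)^{s}=(nk+a)^{s}/k^{s}$ and $H_{n}(a/k)=\sum_{j=0}^{n}(j+a/k)^{-1}=k\sum_{j=0}^{n}(jk+a)^{-1}$, the factor $k^{s+1}$ is pulled out uniformly:
\[
\zeta_{H}\!\left(s,\frac{a}{k}\right)=k^{s+1}\sum_{n=0}^{\infty}\frac{1}{(nk+a)^{s}}\sum_{j=0}^{n}\frac{1}{jk+a},
\]
so that, summing over $a=1,\dots,k$,
\[
\sum_{a=1}^{k}\zeta_{H}\!\left(s,\frac{a}{k}\right)=k^{s+1}\sum_{a=1}^{k}\sum_{n=0}^{\infty}\frac{1}{(nk+a)^{s}}\sum_{j=0}^{n}\frac{1}{jk+a}.
\]
It then remains only to identify the triple sum with $\zeta_{A(k)}(s)$.

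Next I would reindex by $m=nk+a$. As $(a,n)$ runs over $\{1,\dots,k\}\times\{0,1,2,\dots\}$, each integer $m\geq1$ arises exactly once, via the unique representation $m=nk+a$ with $n\geq0$ and $1\leq a\leq k$, and the outer weight is precisely $m^{-s}$. For the block attached to a fixed $m$, the arguments $jk+a$ with $0\leq j\leq n$ are exactly the positive integers $\leq m$ in the residue class $a\equiv m\pmod{k}$, whence
\[
\sum_{j=0}^{n}\frac{1}{jk+a}=\sum_{\substack{1\leq \ell\leq m\\ \ell\equiv m\,(\mathrm{mod}\,k)}}\frac{1}{\ell}.
\]
Because $\operatorname{Re}(s)>1$ gives absolute convergence of the nonnegative-term double series, the rearrangement into a single sum over $m$ is legitimate.

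Finally I would match this inner weight against $A_{m}(k)$. Writing \eqref{gAnk} as the Cauchy product $-\ln(1-x)\cdot(1-x^{k})^{-1}=\big(\sum_{\ell\geq1}x^{\ell}/\ell\big)\big(\sum_{j\geq0}x^{jk}\big)$ and reading off the coefficient of $x^{m}$ gives exactly $A_{m}(k)=\sum_{\ell+jk=m}\ell^{-1}$, which is the residue-class harmonic sum above. Substituting back yields $\sum_{a,n}(nk+a)^{-s}\sum_{j}(jk+a)^{-1}=\sum_{m\geq1}A_{m}(k)\,m^{-s}=\zeta_{A(k)}(s)$, and the prefactor $k^{s+1}$ finishes the computation; the identity then extends to all $s$ by meromorphic continuation. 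The only genuine content is the bookkeeping in the reindexing step, namely recognizing that the triangular inner sum over $j$ reassembles, across all pairs $(a,n)$ feeding a common $m$, into precisely the single residue-class harmonic sum produced by the generating function. Everything else is formal manipulation of absolutely convergent series.
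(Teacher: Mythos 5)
Your proof is correct, but it takes a genuinely different route from the paper. The paper starts from the integral representation $\zeta_{H}(s,a)=\frac{1}{\Gamma(s)}\int_{0}^{\infty}\frac{x^{s-1}}{1-e^{-x}}e^{-xa}\Phi(e^{-x},1;a)\,dx$ of \cite{KDCC}, sums over $a$ under the integral sign, reindexes $a+nk=j$ to collapse the Lerch sums into $-\ln(1-e^{-t})/(1-e^{-kt})$ after the substitution $x=kt$, and only then invokes the generating function \eqref{gAnk} to integrate term by term against $\Gamma(s)n^{-s}$. You instead stay entirely at the level of Dirichlet series: you pull out $k^{s+1}$ from each term, reindex by $m=nk+a$, and recognize the inner triangular sum as the residue-class harmonic sum $\sum_{\ell\le m,\ \ell\equiv m\,(\mathrm{mod}\,k)}1/\ell$, which you then identify with $A_{m}(k)$ by reading off the Cauchy product $-\ln(1-x)\cdot(1-x^{k})^{-1}$ — exactly the closed form for $A_{n}(k)$ that the paper derives separately inside the proof of Theorem \ref{mteo1}. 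Your argument is more elementary (no integral representation, no interchange of sum and integral, only rearrangement of an absolutely convergent double series for $\operatorname{Re}(s)>1$ followed by meromorphic continuation), whereas the paper's version reuses the Mellin-transform machinery it needs anyway for the contour-integral continuation in Theorem \ref{mteo1}. Both are complete; the key combinatorial fact — that the unique representation $m=nk+a$ makes the inner sums reassemble into $A_{m}(k)$ — is the same coincidence that the paper's change of variables $a+nk=j$ exploits in integral form.
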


Using contour integral we prove the following theorem, which shows that
$\zeta_{A\left(  k\right)  }\left(  s\right)  $ has an analytic continuation
to the whole complex $s$-plane except for the poles $s=1$, $s=0$ and $s=1-2j,$
$j\in\mathbb{N}$.

\begin{theorem}
\label{mteo1}We have%
\begin{equation}
\zeta_{A\left(  k\right)  }\left(  s\right)  =\Gamma\left(  1-s\right)
I\left(  s,k\right)  +\zeta\left(  s+1\right)  -\frac{1}{k^{s}}\left(
\psi\left(  s\right)  \zeta\left(  s\right)  +\zeta^{\prime}\left(  s\right)
-\zeta\left(  s\right)  \log k\right)  . \label{2}%
\end{equation}
where $I\left(  s,k\right)  $ is defined by (\ref{Isk}).
\end{theorem}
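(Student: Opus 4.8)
The plan is to begin from the Gamma-function integral $n^{-s}=\frac{1}{\Gamma(s)}\int_0^\infty t^{s-1}e^{-nt}\,dt$ and the generating function (\ref{gAnk}). Inserting the former into $\zeta_{A(k)}(s)=\sum_{n\geq1}A_n(k)n^{-s}$ and interchanging sum and integral (legitimate for $\operatorname{Re}(s)>1$ since $A_n(k)=O(\log n)$), the series $\sum_n A_n(k)e^{-nt}$ collapses by (\ref{gAnk}) with $x=e^{-t}$, giving
\[
\Gamma(s)\zeta_{A(k)}(s)=\int_0^\infty t^{s-1}\,\frac{-\ln(1-e^{-t})}{1-e^{-kt}}\,dt,\qquad \operatorname{Re}(s)>1.
\]
I would then peel off the three elementary terms of (\ref{2}) by two successive splittings. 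Writing $1/(1-e^{-kt})=1+1/(e^{kt}-1)$ separates the integral into $\int_0^\infty t^{s-1}(-\ln(1-e^{-t}))\,dt$ and $\int_0^\infty t^{s-1}(-\ln(1-e^{-t}))/(e^{kt}-1)\,dt$; expanding $-\ln(1-e^{-t})=\sum_{m\geq1}e^{-mt}/m$ in the first shows it equals $\Gamma(s)\zeta(s+1)$, producing the term $\zeta(s+1)$.

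In the remaining integral I would substitute $-\ln(1-e^{-t})=-\ln t+g(t)$, where $g(t):=\ln\big(t/(1-e^{-t})\big)$ is analytic at the origin with a simple zero there. The $-\ln t$ part is evaluated by differentiating the standard Mellin integral $\int_0^\infty t^{s-1}(e^{kt}-1)^{-1}\,dt=\Gamma(s)\zeta(s)k^{-s}$ in $s$:
\[
\int_0^\infty \frac{t^{s-1}(-\ln t)}{e^{kt}-1}\,dt=-\frac{d}{ds}\!\left[\frac{\Gamma(s)\zeta(s)}{k^s}\right]=-\frac{\Gamma(s)}{k^s}\big(\psi(s)\zeta(s)+\zeta'(s)-\zeta(s)\log k\big),
\]
which after division by $\Gamma(s)$ is exactly the bracketed term in (\ref{2}) (here I use $\Gamma'=\Gamma\psi$).

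The loop integral $I(s,k)$ of (\ref{Isk}) is what remains from the $g(t)$ part, namely $\frac{1}{\Gamma(s)}\int_0^\infty t^{s-1}g(t)(e^{kt}-1)^{-1}\,dt$. The crucial observation is that although $\ln z$ and $\ln(1-e^{-z})$ are each multivalued about $z=0$, their difference $g(z)=\ln z-\ln(1-e^{-z})$ has trivial monodromy there, so $g(z)/(e^{kz}-1)$ is single valued and in fact analytic at the origin (the simple zero of $g$ absorbs the simple pole of $(e^{kz}-1)^{-1}$). Hence the integrand $(-z)^{s-1}g(z)/(e^{kz}-1)$, with $(-z)^{s-1}$ cut along the positive real axis, behaves on a Hankel loop $\mathcal{C}$ encircling that axis exactly as in Riemann's treatment of $\zeta$: the two edges differ only by the phase of $(-z)^{s-1}$, the small circle contributes nothing as its radius tends to $0$ (for $\operatorname{Re}(s)>0$), and after $\Gamma(s)\Gamma(1-s)=\pi/\sin\pi s$ one gets $\int_0^\infty t^{s-1}g(t)(e^{kt}-1)^{-1}\,dt=\Gamma(s)\Gamma(1-s)\,I(s,k)$ with $I(s,k)=\frac{1}{2\pi i}\oint_{\mathcal{C}}(-z)^{s-1}\frac{g(z)}{e^{kz}-1}\,dz$. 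Dividing by $\Gamma(s)$ yields $\Gamma(1-s)I(s,k)$; assembling the three pieces gives (\ref{2}), and since $I(s,k)$ is entire in $s$ the continuation to the whole plane follows, the poles being those forced by $\zeta(s+1)$, the bracketed term, and $\Gamma(1-s)$.

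The \emph{main obstacle} is the analytic bookkeeping around $z=0$: one must verify that the logarithmic monodromies genuinely cancel in $g$, that $g(z)/(e^{kz}-1)$ is analytic at the origin so the small-circle contribution vanishes, and that all splittings together with the sum–integral interchange are valid on the initial strip $\operatorname{Re}(s)>1$ before the contour representation assumes responsibility for the continuation. The remaining ingredients are the now-standard Hankel-loop computation and the elementary Mellin evaluations above.
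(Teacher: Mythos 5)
Your proposal is correct and follows essentially the same route as the paper: the same three-way decomposition (the $\zeta(s+1)$ piece, obtained in the paper via $A_{n-k}(k)=A_n(k)-1/n$ and in your version via $1/(1-e^{-kt})=1+1/(e^{kt}-1)$, which are the same manipulation; the $-\log t$ piece evaluated as $-\frac{d}{ds}[\Gamma(s)\zeta(s)k^{-s}]$; and the analytic remainder $g(t)/(e^{kt}-1)$ packaged as a Hankel-loop integral whose small-circle contribution vanishes). The only cosmetic differences are the direction of the argument (the paper collapses its contour integral (\ref{Isk}) onto the real axis and recovers the series, while you build the contour integral from the Mellin representation) and your use of $(-z)^{s-1}$ around the positive real axis, which is the paper's $I(s,k)$ after the substitution $z\mapsto -z$.
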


The proofs of Proposition \ref{prop-raabe} and Theorem \ref{mteo1} are given
in Section \ref{sec2}. With the help of (\ref{2}), in Section \ref{sec3}, we
determine the values $\zeta_{A\left(  k\right)  }\left(  -2n\right)  $ and
describe the Laurent expansion of $\zeta_{A\left(  k\right)  }\left(
s\right)  $ at the poles $s=1$, $s=0$ and $s=1-2j,$ $j\in\mathbb{N}$. In
addition, for $\gamma_{A\left(  k\right)  }\left(  m\right)  $ (the Laurent
coefficients of $\zeta_{A\left(  k\right)  }\left(  s\right)  $ in a
neighborhood of $s=1$) we give the closed-form expression (\ref{ev1}) and the
limit representation (\ref{lim}), by modifying the method of Briggs and
Buschman \cite{BB}.

Secondly, in Section \ref{sec5}, we offer two closed-form evaluation formulas for
$\gamma_{H}\left(  m,1/2\right)  $, the first one depends on (\ref{ev1}) and
the second one depends on the relationship between $\zeta_{H}\left(
s,1/2\right)  $, $\zeta_{H}\left(  s\right)  $ and $\eta_{H^{-}}\left(
s\right)  $. These two formulas provide a contribution to the evaluation of
$\gamma_{H^{-}}\left(  m\right)  $, the Laurent coefficient of $\zeta_{H^{-}%
}\left(  s\right)  $ in a neighborhood of $s=1$.

Thirdly, we express $\zeta_{O}\left(  s\right)  $ and $S_{1,s}^{++}\left(
0,1/2\right)  $ in terms of $\zeta_{H}\left(  s\right)  $, $\eta_{H}\left(
s\right)  $ and $\zeta_{H}\left(  s,1/2\right)  $. More precisely, in Section
\ref{sec6}, we prove the following.

\begin{theorem}
\label{mteo3}We have%
\begin{equation}
S_{1,s}^{++}\left(  0,\frac{1}{2}\right)  =2^{s}\left(  \eta_{H}\left(
s\right)  +\zeta_{H}\left(  s\right)  \right)  -\zeta_{H}\left(  s,\frac{1}%
{2}\right)  \label{19}%
\end{equation}
and%
\begin{equation}
\zeta_{O}\left(  s\right)  =\frac{1}{2}\left(  2^{s}-1\right)  \zeta
_{H}\left(  s\right)  -2^{s-1}\eta_{H}\left(  s\right)  . \label{ozeta}%
\end{equation}

\end{theorem}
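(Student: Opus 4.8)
The plan is to establish both identities first on the half-plane $\operatorname{Re}\left(s\right)>1$, where every series involved converges absolutely (the partial sums $H_{n}$ and $O\left(n\right)=\sum_{k=1}^{n}1/(2k-1)$ grow only logarithmically, so all termwise rearrangements below are legitimate), and then to propagate them to the full $s$-plane by analytic continuation, since all four functions are meromorphic. The single engine behind both formulas is the splitting of a harmonic Dirichlet series according to the parity of its index, combined with the elementary arithmetic identities
\[
H_{2n}=\tfrac{1}{2}H_{n}+O\left(n\right),\qquad H_{2n+1}=\tfrac{1}{2}H_{n}+O\left(n+1\right),
\]
obtained by separating $\sum_{k=1}^{2n}1/k$ (respectively $\sum_{k=1}^{2n+1}1/k$) into its even-denominator part $\tfrac{1}{2}H_{n}$ and its odd-denominator part $O\left(n\right)$ (respectively $O\left(n+1\right)$).

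The key observation I would record at the outset is that adding and subtracting the defining series of $\zeta_{H}$ and $\eta_{H}$ retains exactly the odd- and even-index terms:
\[
\zeta_{H}\left(s\right)+\eta_{H}\left(s\right)=2\sum_{n=0}^{\infty}\frac{H_{2n+1}}{\left(2n+1\right)^{s}},\qquad \zeta_{H}\left(s\right)-\eta_{H}\left(s\right)=2\sum_{n=1}^{\infty}\frac{H_{2n}}{\left(2n\right)^{s}}.
\]
From the second relation and $\left(2n\right)^{-s}=2^{-s}n^{-s}$ I obtain $\sum_{n\geq1}H_{2n}\,n^{-s}=2^{s-1}\bigl(\zeta_{H}\left(s\right)-\eta_{H}\left(s\right)\bigr)$. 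Substituting $O\left(n\right)=H_{2n}-\tfrac{1}{2}H_{n}$ into $\zeta_{O}\left(s\right)=\sum_{n\geq1}O\left(n\right)n^{-s}$ then gives
\[
\zeta_{O}\left(s\right)=2^{s-1}\bigl(\zeta_{H}\left(s\right)-\eta_{H}\left(s\right)\bigr)-\tfrac{1}{2}\zeta_{H}\left(s\right),
\]
which is precisely (\ref{ozeta}).

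For (\ref{19}) I would first rewrite the two left-hand objects over odd denominators. Using $n+\tfrac{1}{2}=\left(2n+1\right)/2$ gives $S_{1,s}^{++}\left(0,\tfrac{1}{2}\right)=2^{s}\sum_{n\geq1}H_{n}\left(2n+1\right)^{-s}$, while $H_{n}\left(\tfrac{1}{2}\right)=2O\left(n+1\right)$ turns the definition of $\zeta_{H}\left(s,\tfrac{1}{2}\right)$ into $2^{s+1}\sum_{n\geq0}O\left(n+1\right)\left(2n+1\right)^{-s}$. Adding the two and collecting the coefficient of $\left(2n+1\right)^{-s}$ produces $H_{n}+2O\left(n+1\right)$ for $n\geq1$ and $2O\left(1\right)=2$ for $n=0$; by the identity $H_{2n+1}=\tfrac{1}{2}H_{n}+O\left(n+1\right)$ each such coefficient equals $2H_{2n+1}$, so the combined series collapses to $2^{s+1}\sum_{n\geq0}H_{2n+1}\left(2n+1\right)^{-s}=2^{s}\bigl(\zeta_{H}\left(s\right)+\eta_{H}\left(s\right)\bigr)$ by the parity identity above, yielding (\ref{19}).

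All of the computations are elementary, and I do not anticipate a genuinely hard step. The only points that will require care are the index shift $n\mapsto n+1$ hidden in $H_{n}\left(\tfrac{1}{2}\right)=2O\left(n+1\right)$ and the isolated $n=0$ term, where the $S$-series contributes nothing but the $\zeta_{H}\left(s,\tfrac{1}{2}\right)$-series does. The main (mild) obstacle is therefore purely bookkeeping: aligning the three differently indexed odd-denominator series so that the arithmetic identities can be applied termwise, after which the parity-splitting of $\zeta_{H}\pm\eta_{H}$ finishes both cases.
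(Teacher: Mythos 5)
Your proposal is correct, and it reaches both identities by a genuinely different and more elementary route than the paper. You verify everything directly for $\operatorname{Re}\left(s\right)>1$ by parity-splitting the Dirichlet series: the identities $\zeta_{H}\left(s\right)\pm\eta_{H}\left(s\right)=2\sum H_{2n+1}\left(2n+1\right)^{-s}$ (resp. $2\sum H_{2n}\left(2n\right)^{-s}$), together with $H_{2n}=\tfrac{1}{2}H_{n}+O\left(n\right)$, $H_{2n+1}=\tfrac{1}{2}H_{n}+O\left(n+1\right)$ and $H_{n}\left(\tfrac{1}{2}\right)=2O\left(n+1\right)$, do all the work, and your bookkeeping of the index shift and the isolated $n=0$ term (where the coefficient $2O\left(1\right)=2=2H_{1}$ still fits the pattern) is accurate; analytic continuation then extends both formulas, exactly as the paper also implicitly relies on it. The paper instead works at the level of the parametric functions $\eta_{H}\left(s,a\right)$ and $\zeta_{H}\left(s,a\right)$: it splits $\eta_{H}\left(s,a\right)$ over even indices, invokes the external Sofo--Choi relation $\sum_{n\geq1}\mathcal{H}_{2n}\left(a\right)\left(n+b\right)^{-s}=\tfrac{1}{2}S_{1,s}^{++}\left(\tfrac{a-1}{2},b\right)+\tfrac{1}{2}S_{1,s}^{++}\left(\tfrac{a}{2},b\right)$, and derives the general identity (\ref{18}), from which (\ref{19}) and (\ref{ozeta}) follow by setting $a=1$ and $a=2$. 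What your argument buys is self-containedness and transparency --- no appeal to \cite{SoC} and only elementary splittings of harmonic numbers by parity; what the paper's argument buys is generality, since (\ref{18}) is a one-parameter family of such relations of which the theorem records only two instances.
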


Equations (\ref{19}) and (\ref{ozeta}) lead closed-form expressions for
$d_{m}$ and $\gamma_{O}\left(  m\right)  $, the Laurent coefficients of
$S_{1,s}^{++}\left(  0,1/2\right)  $ and $\zeta_{O}\left(  s\right)  $ in a
neighborhood of $s=1$, respectively. Besides, we reach at the values
$S_{1,-2m}^{++}\left(  0,1/2\right)  $ and $\zeta_{O}\left(  -2m\right)  $,
where $m\in\mathbb{N}\cup\left\{  0\right\}  $. As a bonus, the equations
(\ref{19}) and (\ref{ozeta}) with (\ref{3}) allow us to write the values
$\eta_{H}\left(  m\right)  $, $\eta_{H^{-}}\left(  m\right)  $ and $\zeta
_{H}\left(  m,1/2\right)  $ as%
\begin{align}
2^{m-1}\eta_{H}\left(  m\right)   &  =\frac{1}{2}\left(  2^{m}-1\right)
\zeta_{H}\left(  m\right)  -\zeta_{O}\left(  m\right)  ,\label{20a}\\
\zeta_{H}\left(  m,\frac{1}{2}\right)   &  =\left(  2^{m+1}-1\right)
\zeta_{H}\left(  m\right)  -2\zeta_{O}\left(  m\right)  -S_{1,m}^{++}\left(
0,\frac{1}{2}\right)  ,\label{20b}\\
2^{m}\eta_{H^{-}}\left(  m\right)   &  =2^{m}\zeta_{H}\left(  m\right)
-2\zeta_{O}\left(  m\right)  -S_{1,m}^{++}\left(  0,\frac{1}{2}\right)
\label{20c}%
\end{align}
for $2\leq m\in\mathbb{N}$. Thus, (\ref{20a})--(\ref{20c}) and (\ref{ozetam})
show that the values $\eta_{H}\left(  m\right)  $, $\eta_{H^{-}}\left(
m\right)  $ and $\zeta_{H}\left(  m,1/2\right)  $ can also be expressed in
closed-form evaluations in terms of zeta values and log-sine integrals over
$\left[  0,2\pi\right]  $ and $\left[  0,\pi\right]  $.

As a demonstration of the formulas (\ref{ev1}), (\ref{ev2a}) or (\ref{ev2b}),
(\ref{8b}), (\ref{gammao}) and (\ref{dn}) we present the first few values of
the corresponding Stieltjes constants:
\medskip
\begin{center}
$%
\begin{tabular}
[c]{lcc}\hline
$\gamma_{A\left(  k\right)  }\left(  n\right)  $ & $k=2$ & $k=3$%
\begin{tabular}
[c]{l}%
\smallskip\\
\end{tabular}
\\\hline
$n=0$ & \multicolumn{1}{l}{$1.02587476785559324\ldots$} &
\multicolumn{1}{l}{$1.06996099339579407$\ldots$%
\begin{tabular}
[c]{l}%
\smallskip\\
\end{tabular}
\ $}\\
$n=1$ & \multicolumn{1}{l}{$0.35594914036775009$\ldots} &
\multicolumn{1}{l}{$0.33314014622807110$\ldots}\\
$n=2$ & \multicolumn{1}{l}{$0.93150345624786643$\ldots} &
\multicolumn{1}{l}{$0.88997380163684287$\ldots$%
\begin{tabular}
[c]{l}%
\smallskip\\
\end{tabular}
\ $}\\
$n=3$ & \multicolumn{1}{l}{$2.93741964148769000$\ldots} &
\multicolumn{1}{l}{$2.88842478274497700$\ldots}\\
$n=4$ & \multicolumn{1}{l}{$11.8842517416686066$\ldots} &
\multicolumn{1}{l}{$11.8175214586523864$\ldots$%
\begin{tabular}
[c]{l}%
\smallskip\\
\end{tabular}
\ $}\\
$n=5$ & \multicolumn{1}{l}{$59.6981596440678285$\ldots} &
\multicolumn{1}{l}{$59.5558093930507495$\ldots}\\
& $k=4$ & $k=5$%
\begin{tabular}
[c]{l}%
\smallskip\\
\end{tabular}
\\\hline
$n=0$ & \multicolumn{1}{l}{$1.10841178277676204$\ldots} &
\multicolumn{1}{l}{$1.14109065573389757$\ldots%
\begin{tabular}
[c]{l}%
\smallskip\\
\end{tabular}
}\\
$n=1$ & \multicolumn{1}{l}{$0.32209830233514756$\ldots} &
\multicolumn{1}{l}{$0.31774187945874161$\ldots}\\
$n=2$ & \multicolumn{1}{l}{$0.85300634735541063$\ldots} &
\multicolumn{1}{l}{$0.82164783744427303$\ldots$%
\begin{tabular}
[c]{l}%
\smallskip\\
\end{tabular}
\ $}\\
$n=3$ & \multicolumn{1}{l}{$2.83187968374836900$\ldots} &
\multicolumn{1}{l}{$2.77288405256252639$\ldots}\\
$n=4$ & \multicolumn{1}{l}{$11.7351946009035882$\ldots} &
\multicolumn{1}{l}{$11.6393361732958488$\ldots%
\begin{tabular}
[c]{l}%
\smallskip\\
\end{tabular}
}\\
$n=5$ & \multicolumn{1}{l}{$59.4016636386555705$\ldots} &
\multicolumn{1}{l}{$59.2271261821701563$\ldots}%
\end{tabular}
\ \ $\medskip

$%
\begin{tabular}
[c]{lcc}\hline
& $\gamma_{H}\left(  n,1/2\right)  $ & $\gamma_{H^{^{-}}}\left(  n\right)  $%
\begin{tabular}
[c]{l}%
\smallskip\\
\end{tabular}
\\\hline
$n=0$
\begin{tabular}
[c]{l}%
\smallskip\\
\end{tabular}
& \multicolumn{1}{l}{$+4.395086911415665\ldots$} &
\multicolumn{1}{l}{$+0.42762775101889\ldots$
\begin{tabular}
[c]{l}%
\smallskip\\
\end{tabular}
}\\
$n=1$ & \multicolumn{1}{l}{$-2.209626703485579\ldots$} &
\multicolumn{1}{l}{$-0.16151097065250\ldots$}\\
$n=2$ & \multicolumn{1}{l}{$+2.919070396332389\ldots$} &
\multicolumn{1}{l}{$-0.05863891452811\ldots$%
\begin{tabular}
[c]{l}%
\smallskip\\
\end{tabular}
}\\
$n=3$ & \multicolumn{1}{l}{$+1.632080580126146\ldots$} &
\multicolumn{1}{l}{$-0.02184763116814\ldots$}\\
$n=4$ & \multicolumn{1}{l}{$+12.85545704209538\ldots$} &
\multicolumn{1}{l}{$-0.00679716118626\ldots$%
\begin{tabular}
[c]{l}%
\smallskip\\
\end{tabular}
}\\
$n=5$ & \multicolumn{1}{l}{$+59.20224186218603\ldots$} &
\multicolumn{1}{l}{$-0.00072943328309\ldots$}\\
$n=6$ & \multicolumn{1}{l}{$+359.9761038476681\ldots$} &
\multicolumn{1}{l}{$+0.00089596703007\ldots$%
\begin{tabular}
[c]{l}%
\smallskip\\
\end{tabular}
}\\
$n=7$ & \multicolumn{1}{l}{$+2518.053983773233\ldots$} &
\multicolumn{1}{l}{$+0.00006691948659\ldots$}\\
$n=8$ & \multicolumn{1}{l}{$+20153.65275669922\ldots$} &
\multicolumn{1}{l}{$-0.00201181883640\ldots$%
\begin{tabular}
[c]{l}%
\medskip\\
\end{tabular}
}\\
$n=9$ & \multicolumn{1}{l}{$+181410.3238030236\ldots$} &
\multicolumn{1}{l}{$-0.00437336373504\ldots$}\\\hline
& $\gamma_{O}\left(  n\right)  $ & $d_{n}$%
\begin{tabular}
[c]{l}%
\smallskip\\
\end{tabular}
\\\hline
$n=0$
\begin{tabular}
[c]{l}%
\smallskip\\
\end{tabular}
& \multicolumn{1}{l}{$+0.55260938885958\ldots$} &
\multicolumn{1}{l}{$+0.02814996749151\ldots$
\begin{tabular}
[c]{l}%
\smallskip\\
\end{tabular}
}\\
$n=1$ & \multicolumn{1}{l}{$-0.47923813171300\ldots$} &
\multicolumn{1}{l}{$+0.85147471204914\ldots$}\\
$n=2$ & \multicolumn{1}{l}{$+0.56056563855360\ldots$} &
\multicolumn{1}{l}{$-1.11455232720134\ldots$%
\begin{tabular}
[c]{l}%
\smallskip\\
\end{tabular}
}\\
$n=3$ & \multicolumn{1}{l}{$-0.56582227536904\ldots$} &
\multicolumn{1}{l}{$+1.16603391454342\ldots$}\\
$n=4$ & \multicolumn{1}{l}{$+0.53307981193217\ldots$} &
\multicolumn{1}{l}{$-1.00064761848714\ldots$%
\begin{tabular}
[c]{l}%
\smallskip\\
\end{tabular}
}\\
$n=5$ & \multicolumn{1}{l}{$-0.31150731317173\ldots$} &
\multicolumn{1}{l}{$+0.77991738610518\ldots$}\\
$n=6$ & \multicolumn{1}{l}{$+0.14509282444820\ldots$} &
\multicolumn{1}{l}{$+0.17758339430635\ldots$%
\begin{tabular}
[c]{l}%
\smallskip\\
\end{tabular}
}\\
$n=7$ & \multicolumn{1}{l}{$+0.86693551972323\ldots$} &
\multicolumn{1}{l}{$-0.09508032673120\ldots$}\\
$n=8$ & \multicolumn{1}{l}{$+0.61277165952460\ldots$} &
\multicolumn{1}{l}{$+5.33425919693367\ldots$%
\begin{tabular}
[c]{l}%
\smallskip\\
\end{tabular}
}\\
$n=9$ & \multicolumn{1}{l}{$+8.81674734666844\ldots$} &
\multicolumn{1}{l}{$+11.8927079050673\ldots$}%
\end{tabular}
\ \ $\medskip
\end{center}

\noindent(see \cite[p. 3 and p. 15]{KDCC} for the numerical values of
$\gamma_{H}\left(  n\right)  $ and $\eta_{H}^{\left(  v\right)  }\left(
1\right)  =\gamma_{\widetilde{H}}\left(  n\right)  $ used in
calculations.)$\ $

Throughout this study, a sum that is empty is regarded as zero, and thus
$H_{0}=0$.

\section{Proofs of Proposition \ref{prop-raabe} and Theorem \ref{mteo1}%
\label{sec2}}

\subsection{Proof of Proposition \ref{prop-raabe}}
We make use of
\[
\zeta_{H}\left(  s,a\right)  =\frac{1}{\Gamma\left(  s\right)  }\int%
_{0}^{\infty}\frac{x^{s-1}}{1-e^{-x}}e^{-xa}\Phi\left(  e^{-x},1;a\right)  dx
\]
\cite[Eq. (14)]{KDCC} to deduce that%
\begin{align*}
\sum\limits_{a=1}^{k}\zeta_{H}\left(  s,\frac{a}{k}\right)   &  =\frac
{1}{\Gamma\left(  s\right)  }\sum\limits_{a=1}^{k}\int_{0}^{\infty}%
\frac{x^{s-1}}{1-e^{-x}}e^{-x\frac{a}{k}}\Phi\left(  e^{-x},1;\frac{a}%
{k}\right)  dx\\
&  =\frac{k}{\Gamma\left(  s\right)  }\int_{0}^{\infty}\frac{x^{s-1}}%
{1-e^{-x}}\sum\limits_{n=0}^{\infty}\sum\limits_{a=1}^{k}\frac{e^{-\left(
kn+a\right)  x/k}}{kn+a}dx\\
&  \overset{a+nk=j}{=}\frac{k}{\Gamma\left(  s\right)  }\int_{0}^{\infty}%
\frac{x^{s-1}}{1-e^{-x}}\sum\limits_{j=1}^{\infty}\frac{e^{-j\left(
x/k\right)  }}{j}dx\\
&  =-\frac{k^{s+1}}{\Gamma\left(  s\right)  }\int_{0}^{\infty}t^{s-1}\frac
{\ln\left(  1-e^{-t}\right)  }{1-e^{-kt}}dt.
\end{align*}
We now exploit (\ref{gAnk}) and see that\
\begin{align*}
\sum\limits_{a=1}^{k}\zeta_{H}\left(  s,\frac{a}{k}\right)   &  =\frac
{k^{s+1}}{\Gamma\left(  s\right)  }\sum\limits_{n=1}^{\infty}A_{n}\left(
k\right)  \int_{0}^{\infty}t^{s-1}e^{-nt}dt\\
&  =k^{s+1}\sum\limits_{n=1}^{\infty}\frac{A_{n}\left(  k\right)  }{n^{s}}.
\end{align*}

\subsection{Proof of Theorem \ref{mteo1}}
Let%
\begin{equation}
I\left(  s,k\right)  =\frac{1}{2\pi i}\int_{C}\frac{z^{s-1}e^{kz}}{e^{kz}%
-1}\log\left(  \frac{e^{z}-1}{z}\right)  dz. \label{Isk}%
\end{equation}
Here $C$ denotes the Hankel contour which starts from $-\infty$ along the
lower side of the negative real axis, encircles the origin once in the
positive (counter-clockwise) direction and then returns to $-\infty$ along the
upper side of the negative real axis. The loop $C$ consists of the parts
$C=C_{-}\cup C_{+}\cup C_{\varepsilon}$, where $C_{\varepsilon}$ is a
positively-oriented circle of radius $\varepsilon$ about the origin, and
$C_{-}$ and $C_{+}$ are the lower and upper edges of a cut in the complex
$z$-plane along the negative real axis.

We focus on the integral $I\left(
s,k\right)  .$ Since $z=xe^{-\pi i}$ on $C_{-}$, $z=xe^{\pi i}$ on $C_{+}$ and
$z=\varepsilon e^{i\theta}$ on $C_{\varepsilon}$ we find that%
\begin{align*}
I\left(  s,k\right)   &  =\frac{1}{2\pi i}\left(  \int_{C_{-}}+\int_{C_{+}%
}+\int_{C_{\varepsilon}}\right)  \frac{z^{s-1}e^{kz}}{e^{kz}-1}\log\left(
\frac{e^{z}-1}{z}\right)  dz\\
&  =\frac{\sin\left(  s\pi\right)  }{\pi}\int_{\varepsilon}^{\infty}%
\frac{x^{s-1}e^{-kx}\log\left(  \frac{1-e^{-x}}{x}\right)  }{e^{-kx}%
-1}dx+\frac{1}{2\pi i}\int_{C_{\varepsilon}}\frac{z^{s-1}e^{kz}\log\left(
\frac{e^{z}-1}{z}\right)  }{e^{kz}-1}dz.
\end{align*}
For $\operatorname{Re}\left(  s\right)  >1$ the function $\frac{z^{s-1}e^{kz}%
}{e^{kz}-1}\log\left(  \frac{e^{z}-1}{z}\right)  $\ is analytic in a
neighborhood of $z=0$. So the integral over $C_{\varepsilon}$ vanishes.
Letting $\varepsilon\rightarrow0$ we reach at%
\begin{align}
I\left(  s,k\right)   &  =\frac{\sin\left(  s\pi\right)  }{\pi}\int%
_{0}^{\infty}\frac{x^{s-1}e^{-kx}}{e^{-kx}-1}\log\left(  \frac{1-e^{-x}}%
{x}\right)  dx\label{10}\\
&  =\frac{\sin\left(  s\pi\right)  }{\pi}\left(  \int_{0}^{\infty}%
\frac{x^{s-1}e^{-kx}}{e^{-kx}-1}\log\left(  1-e^{-x}\right)  dx+\int%
_{0}^{\infty}\frac{x^{s-1}e^{-kx}}{1-e^{-kx}}\log xdx\right)  .\nonumber
\end{align}

To calculate the first integral on the RHS we use (\ref{gAnk}):%
\[
\int_{0}^{\infty}x^{s-1}\frac{e^{-kx}\log\left(  1-e^{-x}\right)  }{e^{-xk}%
-1}dx=\Gamma\left(  s\right)  \sum_{n=k+1}^{\infty}\frac{A_{n-k}\left(
k\right)  }{n^{s}}.
\]

It can be seen from (\ref{gAnk}) that%
\begin{align*}
\sum\limits_{n=1}^{\infty}A_{n}\left(  k\right)  x^{n}  &  =-\frac{\ln\left(
1-x\right)  }{1-x^{k}}\\
&  =\sum\limits_{\substack{j\geq0,n\geq1\\n+kj=m}}^{\infty}\frac{x^{m}}%
{m-kj}=\sum\limits_{m=1}^{\infty}\sum\limits_{j=0}^{\left\lfloor \frac{m-1}%
{k}\right\rfloor }\frac{1}{m-kj}x^{m},
\end{align*}
i.e.
\[
A_{n}\left(  k\right)  =\sum\limits_{j=0}^{\left\lfloor \left(  n-1\right)
/k\right\rfloor }\frac{1}{n-kj}=\sum\limits_{\substack{v=1\\v\equiv n\left(
\operatorname{mod}k\right)  }}^{n}\frac{1}{v},
\]
where $\left\lfloor x\right\rfloor $ stands for the integer part of
$x\in\mathbb{R}$. Moreover the numbers $A_{n}\left(  k\right)  $ may be
presented as
\begin{equation}
A_{n}\left(  k\right)  =\sum\limits_{j=1}^{n}\frac{f_{n}\left(  j\right)  }%
{j}, \label{Ank}%
\end{equation}
where
\[
f_{n}\left(  j\right)  =\left\{
\begin{array}
[c]{cc}%
1, & j\equiv n\left(  \operatorname{mod}k\right) \\
0, & j\not \equiv n\left(  \operatorname{mod}k\right)  .
\end{array}
\right.
\]
We then find that
\[
A_{n-k}\left(  k\right)  =A_{n}\left(  k\right)  -\frac{1}{n}%
\]
and%
\[
A_{n}\left(  k\right)  =\sum\limits_{j=1}^{n}\frac{f_{n}\left(  j\right)  }%
{j}=\frac{f_{n}\left(  n\right)  }{n}=\frac{1}{n},\text{ }1\leq n\leq k.
\]
Thus,
\[
\int_{0}^{\infty}\frac{x^{s-1}e^{-kx}}{e^{-xk}-1}\log\left(  1-e^{-x}\right)
dx=\Gamma\left(  s\right)  \zeta_{A\left(  k\right)  }\left(  s\right)
-\Gamma\left(  s\right)  \zeta\left(  s+1\right)  .
\]

For the second integral we take advantage of
\[
\Gamma\left(  s\right)  \zeta\left(  s\right)  =\int_{0}^{\infty}\frac
{t^{s-1}e^{-t}}{1-e^{-t}}dt
\]
and find that
\begin{align*}
\int_{0}^{\infty}\frac{x^{s-1}e^{-kx}}{1-e^{-kx}}\log xdx  &  =\frac{1}{k^{s}%
}\int_{0}^{\infty}\frac{t^{s-1}e^{-t}}{1-e^{-t}}\log tdt-\frac{\log k}{k^{s}%
}\int_{0}^{\infty}\frac{t^{s-1}e^{-t}}{1-e^{-t}}dt\\
&  =\frac{1}{k^{s}}\Gamma^{\prime}\left(  s\right)  \zeta\left(  s\right)
+\frac{1}{k^{s}}\Gamma\left(  s\right)  \zeta^{\prime}\left(  s\right)
-\frac{\log k}{k^{s}}\Gamma\left(  s\right)  \zeta\left(  s\right)  .
\end{align*}
Combining the results gives (\ref{2}).

Since $I\left(  n,k\right)  =0$ for $n\in\mathbb{N}$, the singularities of
$\zeta_{A\left(  k\right)  }\left(  s\right)  $ arise from the poles of
$\zeta\left(  s+1\right)  $, $\psi\left(  s\right)  \zeta\left(  s\right)  $,
$\zeta^{\prime}\left(  s\right)  $ and $\zeta\left(  s\right)  $. There is a
second-order pole at $s=1$ arising from the term $\zeta^{\prime}\left(
s\right)  /k^{s}$ with
\begin{equation}
\text{\textrm{Res}}\left(  \zeta_{A\left(  k\right)  },s=1\right)
=\lim_{s\rightarrow1}\frac{d}{ds}\left\{  \left(  s-1\right)  ^{2}%
\zeta_{A\left(  k\right)  }\left(  s\right)  \right\}  =\frac{\gamma}{k},
\label{res1}%
\end{equation}
where $\mathrm{Res}\left(  f\left(  s\right)  ,s=a\right)  $ stands for the
residue of $f\left(  s\right)  $ at $s=a$.

Moreover, there are simple poles at $s=0$ and $s=1-2j$, $j\in\mathbb{N}$,
arising from $\zeta\left(  s+1\right)  $ and/or $\psi\left(  s\right)
\zeta\left(  s\right)  $ with
\begin{align*}
\text{\textrm{Res}}\left(  \zeta_{A\left(  k\right)  },s=0\right)   &
=1+\zeta\left(  0\right)  ,\\
\text{\textrm{Res}}\left(  \zeta_{A\left(  k\right)  },s=1-2j\right)   &
=k^{2j-1}\zeta\left(  1-2j\right)  =-k^{2j-1}\frac{B_{2j}}{2j}.
\end{align*}

\section{Laurent coefficients and the values $\zeta_{A\left(  k\right)
}\left(  -2n\right)  $\label{sec3}}

In this section we study the special values $\zeta_{A\left(  k\right)
}\left(  -m\right)  $ ($k$, $m\in\mathbb{N}$), and Laurent series expansions
at its poles via integral representation stated in Theorem \ref{mteo1}.

Theorem \ref{mteo1} and (\ref{res1}) make it obvious that the Laurent series
of the function $\zeta_{A\left(  k\right)  }\left(  s\right)  $ in a
neighborhood of $s=1$ can be written as
\begin{equation}
\zeta_{A\left(  k\right)  }\left(  s\right)  =\frac{1}{\left(  s-1\right)
^{2}}\frac{1}{k}+\frac{1}{s-1}\frac{\gamma}{k}+\sum\limits_{m=0}^{\infty
}\left(  -1\right)  ^{m}\frac{\gamma_{A\left(  k\right)  }\left(  m\right)
}{m!}\left(  s-1\right)  ^{m}. \label{azetaL}%
\end{equation}
Expanding both sides of (\ref{Raabe}) in a Laurent series at $s=1$ gives
\begin{align*}
&  \frac{k}{\left(  s-1\right)  ^{2}}+\frac{1}{s-1}\sum\limits_{a=1}^{k}%
\gamma\left(  0,\frac{a}{k}\right)  +\sum\limits_{n=0}^{\infty}\left(
-1\right)  ^{n}\frac{\left(  s-1\right)  ^{n}}{n!}\sum\limits_{a=1}^{k}%
\gamma_{H}\left(  n,\frac{a}{k}\right) \\
&  =\frac{k}{\left(  s-1\right)  ^{2}}+k\frac{\ln k+\gamma}{s-1}%
+k\sum\limits_{n=0}^{\infty}\left(  \frac{\ln^{n+2}k}{\left(  n+2\right)
!}+\gamma\frac{\ln^{n+1}k}{\left(  n+1\right)  !}\right)  \left(  s-1\right)
^{n}\\
&  +k^{2}\sum\limits_{n=0}^{\infty}\sum\limits_{j=0}^{n}\binom{n}{j}\left(
-1\right)  ^{j}\gamma_{A\left(  k\right)  }\left(  j\right)  \ln^{n-j}\left(
k\right)  \frac{\left(  s-1\right)  ^{n}}{n!}%
\end{align*}
upon the use of (\ref{hzetaL}) and (\ref{azetaL}). This yields the following
multiplication property for $\gamma_{H}\left(  n,b\right)  $:

\begin{corollary}
For all $n\in\mathbb{N}\cup\left\{  0\right\}  $ we have%
\begin{align}
\sum\limits_{a=1}^{k}\gamma_{H}\left(  n,\frac{a}{k}\right)   &  =\left(
-1\right)  ^{n}\left(  \frac{\ln k}{n+2}+\gamma\right)  \frac{k\ln^{n+1}%
k}{n+1}\nonumber\\
&  +k^{2}\sum\limits_{j=0}^{n}\binom{n}{j}\left(  -1\right)  ^{n-j}%
\gamma_{A\left(  k\right)  }\left(  j\right)  \ln^{n-j}\left(  k\right)  .
\label{hgRaabe}%
\end{align}

\end{corollary}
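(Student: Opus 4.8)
The plan is to extract the identity by Laurent-expanding both sides of (\ref{Raabe}) about $s=1$ and comparing the coefficient of $(s-1)^n$ for each $n\ge 0$; there is no analytic subtlety, since (\ref{hzetaL}) and (\ref{azetaL}) already furnish genuine Laurent series for the two sides in a punctured neighborhood of $s=1$, so term-by-term comparison is legitimate.

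First I would expand the left-hand side. Replacing $a$ by $a/k$ in (\ref{hzetaL}) and summing over $a=1,\dots,k$, the principal part assembles into $k/(s-1)^2+\bigl(\sum_{a=1}^{k}\gamma(0,a/k)\bigr)/(s-1)$, while the coefficient of $(s-1)^n$ in the holomorphic part is $(-1)^n\frac{1}{n!}\sum_{a=1}^{k}\gamma_H(n,a/k)$. I note in passing that matching the simple-pole coefficients on the two sides forces $\sum_{a=1}^{k}\gamma(0,a/k)=k(\ln k+\gamma)$, which is just the Gauss multiplication formula for the digamma function (via $\gamma(0,a)=-\psi(a)$); but this is a byproduct and is not needed for the corollary, which concerns only the regular coefficients.

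Next I would expand the right-hand side as a Cauchy product. Writing $k^{s+1}=k^2e^{(s-1)\ln k}=k^2\sum_{i\ge 0}\frac{\ln^i k}{i!}(s-1)^i$ and multiplying by (\ref{azetaL}), the key bookkeeping is that the two singular terms $\frac{1}{k(s-1)^2}$ and $\frac{\gamma}{k(s-1)}$ of (\ref{azetaL}), once multiplied by the whole Taylor factor, shift indices and feed into the holomorphic part, contributing exactly $k\frac{\ln^{n+2}k}{(n+2)!}$ and $k\gamma\frac{\ln^{n+1}k}{(n+1)!}$ to the coefficient of $(s-1)^n$. Multiplying the Taylor factor by the regular part of (\ref{azetaL}) gives the binomial convolution $\frac{k^2}{n!}\sum_{j=0}^{n}\binom{n}{j}(-1)^j\gamma_{A(k)}(j)\ln^{n-j}k$ as the remaining part of the $(s-1)^n$-coefficient.

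Finally I would equate the two coefficients of $(s-1)^n$ and clear the common factor $(-1)^n/n!$. The factorial ratios $n!/(n+2)!=1/((n+1)(n+2))$ and $n!/(n+1)!=1/(n+1)$ factor as $\frac{k\ln^{n+1}k}{n+1}\bigl(\frac{\ln k}{n+2}+\gamma\bigr)$, and the sign identity $(-1)^{n+j}=(-1)^{n-j}$ turns the convolution into $k^2\sum_{j=0}^{n}\binom{n}{j}(-1)^{n-j}\gamma_{A(k)}(j)\ln^{n-j}k$, yielding (\ref{hgRaabe}). The only real obstacle is this index-shifting and factorial/sign bookkeeping; everything else is mechanical.
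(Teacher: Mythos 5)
Your proof is correct and is essentially identical to the paper's: both expand the two sides of (\ref{Raabe}) about $s=1$ via (\ref{hzetaL}) and (\ref{azetaL}) together with $k^{s+1}=k^{2}e^{(s-1)\ln k}$, and compare the coefficients of $(s-1)^{n}$. Your bookkeeping of the index shifts, factorials, and signs matches the paper's displayed expansion exactly (including the simple-pole byproduct $\sum_{a=1}^{k}\gamma(0,a/k)=k(\ln k+\gamma)$).
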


The next proposition presents an evaluation formula for the constants
$\gamma_{A\left(  k\right)  }\left(  m\right)  $.

\begin{proposition}
For $m\in\mathbb{N}\cup\left\{  0\right\}  $, we have%
\begin{align}
\gamma_{A\left(  k\right)  }\left(  m\right)   &  =\left(  -1\right)
^{m}\zeta^{\left(  m\right)  }\left(  2\right)  -\frac{1}{k}\sum
\limits_{v=0}^{m}\binom{m}{v}\frac{\psi^{\left(  v+1\right)  }\left(
1\right)  }{v+1}\left(  -1\right)  ^{v}\log^{m-v}k\nonumber\\
&  +\left(  -1\right)  ^{m}\sum\limits_{v=0}^{m}\binom{m}{v}g_{m-v}%
i_{v,k}+\left(  \psi\left(  1\right)  -\frac{m+1}{m+2}\log k\right)
\frac{\log^{m+1}k}{k\left(  m+1\right)  }\nonumber\\
&  +\frac{1}{k}\sum\limits_{v=0}^{m}\binom{m}{v}\left(  \gamma\left(
v\right)  \log k+\gamma\left(  v+1\right)  \right)  \log^{m-v}k\nonumber\\
&  -\frac{1}{k}\sum\limits_{j=0}^{m}\binom{m}{j}\left(  -1\right)  ^{j}\left(
\log^{m-j}k\right)  \sum\limits_{v=0}^{j}\binom{j}{v}\left(  -1\right)
^{v}\psi^{\left(  j-v\right)  }\left(  1\right)  \gamma\left(  v\right)  ,
\label{ev1}%
\end{align}
where%
\begin{equation}
g_{m}=\left.  \frac{d^{m}}{ds^{m}}\frac{1}{\Gamma\left(  s\right)
}\right\vert _{s=1}\text{ and }i_{m,k}=\int_{0}^{\infty}\frac{e^{-kx}}%
{e^{-kx}-1}\log\left(  \frac{1-e^{-x}}{x}\right)  \log^{m}xdx. \label{gm-im}
\end{equation}

\end{proposition}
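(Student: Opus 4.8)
The plan is to read off $\gamma_{A(k)}(m)$ directly from (\ref{2}) as a Taylor coefficient at $s=1$. By the Laurent expansion (\ref{azetaL}), $\gamma_{A(k)}(m)$ equals $(-1)^{m}m!$ times the coefficient of $(s-1)^{m}$ in the holomorphic part of $\zeta_{A(k)}(s)$ near $s=1$. Accordingly I would split the right-hand side of (\ref{2}) into
\[
\Gamma(1-s)I(s,k),\qquad \zeta(s+1),\qquad -\frac{1}{k^{s}}\bigl(\psi(s)\zeta(s)+\zeta'(s)-\zeta(s)\log k\bigr),
\]
expand each about $s=1$, and sum the three coefficients of $(s-1)^{m}$. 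The middle piece is holomorphic at $s=1$, so it contributes $\zeta^{(m)}(2)/m!$, that is, the summand $(-1)^{m}\zeta^{(m)}(2)$ of (\ref{ev1}).

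For the first piece, although $\Gamma(1-s)$ has a pole at $s=1$ while $I(1,k)=0$, I would make analyticity manifest by combining (\ref{10}) with the reflection formula $\Gamma(1-s)\sin(\pi s)/\pi=1/\Gamma(s)$ to write
\[
\Gamma(1-s)I(s,k)=\frac{1}{\Gamma(s)}\int_{0}^{\infty}\frac{x^{s-1}e^{-kx}}{e^{-kx}-1}\log\!\left(\frac{1-e^{-x}}{x}\right)dx.
\]
Differentiation in $s$ only brings down powers of $\log x$, so the $v$-th derivative of the integral at $s=1$ is precisely $i_{v,k}$ of (\ref{gm-im}); pairing this with the Taylor coefficients $g_{m-v}$ of $1/\Gamma(s)$ through Leibniz's rule yields the coefficient $\frac{1}{m!}\sum_{v=0}^{m}\binom{m}{v}g_{m-v}i_{v,k}$, hence the summand $(-1)^{m}\sum_{v=0}^{m}\binom{m}{v}g_{m-v}i_{v,k}$.

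The remaining work, and the main obstacle, lies in the third piece. I would substitute the Laurent series $\zeta(s)=(s-1)^{-1}+\sum_{n\geq0}(-1)^{n}\gamma(n)(s-1)^{n}/n!$, its derivative, the Taylor series $\psi(s)=\sum_{n\geq0}\psi^{(n)}(1)(s-1)^{n}/n!$, and $k^{-s}=k^{-1}\sum_{n\geq0}(-\log k)^{n}(s-1)^{n}/n!$, and multiply out. As a consistency check, the singular part of this piece, $-k^{-s}\bigl(-(s-1)^{-2}+(\psi(1)-\log k)(s-1)^{-1}\bigr)$, reproduces (using $\psi(1)=-\gamma$) the known singular part $\tfrac1k(s-1)^{-2}+\tfrac{\gamma}{k}(s-1)^{-1}$ of (\ref{res1})--(\ref{azetaL}). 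The regular part then delivers the four remaining summands of (\ref{ev1}): multiplying $-k^{-s}$ against the two polar terms of $\psi(s)\zeta(s)+\zeta'(s)-\zeta(s)\log k$ and reading off the $(s-1)^{m}$ coefficient gives the closed expression $\bigl(\psi(1)-\tfrac{m+1}{m+2}\log k\bigr)\tfrac{\log^{m+1}k}{k(m+1)}$; the part of $\psi(s)\zeta(s)$ in which the simple pole of $\zeta$ meets the Taylor terms of $\psi$ gives the $\psi^{(v+1)}(1)/(v+1)$ sum; the holomorphic parts of $-\zeta(s)\log k$ and $\zeta'(s)$ combine into the $\gamma(v)\log k+\gamma(v+1)$ sum; and the genuine convolution $\sum_{j}\psi^{(j)}(1)\gamma(N-j)$ yields the final double sum. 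The only delicate matters are the reindexing and sign bookkeeping of these convolutions: the double sum appears after using $\binom{m}{N}\binom{N}{j}=m!/\bigl((m-N)!\,j!\,(N-j)!\bigr)$ and checking that the resulting sign $(-1)^{j}$ equals $(-1)^{J+V}$ under the substitution $(J,V)=(N,N-j)$. Collecting the $(s-1)^{m}$ coefficient, multiplying by $(-1)^{m}m!$, and adding the three contributions then gives (\ref{ev1}).
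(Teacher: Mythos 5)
Your proposal is correct and follows essentially the same route as the paper: both read off $\gamma_{A(k)}(m)$ from Theorem \ref{mteo1} by rewriting $\Gamma(1-s)I(s,k)$ via the reflection formula as $\frac{1}{\Gamma(s)}$ times an analytic integral (yielding the $g_{m-v}i_{v,k}$ convolution) and then collecting the $(s-1)^{m}$ coefficients of $\zeta(s+1)$ and $-k^{-s}\left(\psi(s)\zeta(s)+\zeta'(s)-\zeta(s)\log k\right)$ from the standard Laurent expansions. The only cosmetic difference is that the paper quotes the Laurent expansion of $\psi(s)\zeta(s)$ as known, whereas you rederive it by multiplying the series for $\psi$ and $\zeta$.
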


\begin{proof}
We focus on the RHS of (\ref{2}). To expand $\Gamma(1-s)I(s,k)$ into a series
we use (\ref{10}) and write $\Gamma(1-s)I(s,k)$ as%
\[
\Gamma(1-s)I\left(  s,k\right)  =\frac{1}{\Gamma(s)}\int_{0}^{\infty}%
\frac{x^{s-1}e^{-kx}}{e^{-kx}-1}\log\left(  \frac{1-e^{-x}}{x}\right)  dx.
\]
Hence we can write
\begin{align*}
\Gamma(1-s)I\left(  s,k\right)   &  =\sum\limits_{m=0}^{\infty}\frac{g_{m}%
}{m!}\left(  s-1\right)  ^{m}\sum\limits_{m=0}^{\infty}\frac{i_{m,k}}%
{m!}\left(  s-1\right)  ^{m}\\
&  =\sum\limits_{m=0}^{\infty}\sum\limits_{v=0}^{m}\binom{m}{v}g_{m-v}%
i_{v,k}\frac{\left(  s-1\right)  ^{m}}{m!}
\end{align*}
since the integral on the RHS and the function $1/\Gamma(s)$ are analytic at $s=1$. Notwithstanding, we employ the known Laurent expansions:
\begin{align*}
\psi\left(  s\right)  \zeta\left(  s\right)   &  =\frac{\psi\left(  1\right)
}{s-1}+\sum_{n=0}^{\infty}\frac{\psi^{\left(  n+1\right)  }\left(  1\right)
}{n+1}\frac{\left(  s-1\right)  ^{n}}{n!}\\
&  \quad+\sum_{n=0}^{\infty}\sum_{k=0}^{n}\dbinom{n}{k}\left(  -1\right)
^{n-k}\gamma\left(  n-k\right)  \psi^{\left(  k\right)  }\left(  1\right)
\frac{\left(  s-1\right)  ^{n}}{n!}%
\end{align*}
and%
\[
\zeta\left(  s\right)  =\frac{1}{s-1}+\sum_{n=0}^{\infty}\frac{\left(
-1\right)  ^{n}\gamma\left(  n\right)  }{n!}\left(  s-1\right)  ^{n}.
\]
Therefore, we find from (\ref{2}) and (\ref{azetaL}) that
\begin{align*}
&  \sum\limits_{m=0}^{\infty}\frac{\left(  -1\right)  ^{m}\gamma_{A\left(
k\right)  }\left(  m\right)  }{m!}\left(  s-1\right)  ^{m}\\
&  \ =\sum\limits_{m=0}^{\infty}\left(  \sum\limits_{v=0}^{m}\binom{m}%
{v}g_{m-v}i_{v,k}+\zeta^{\left(  m\right)  }\left(  2\right)  \right)
\frac{\left(  s-1\right)  ^{m}}{m!}\\
&  +\sum\limits_{m=0}^{\infty}\left(  -1\right)  ^{m}\left(  \frac{1}{k}%
\sum\limits_{v=0}^{m}\binom{m}{v}\gamma\left(  v\right)  \log^{m-v+1}%
k-\frac{\log k-\psi\left(  1\right)  }{k}\frac{\log^{m+1}k}{m+1}\right)
\frac{\left(  s-1\right)  ^{m}}{m!}\\
&  -\frac{1}{k}\sum\limits_{m=0}^{\infty}\sum\limits_{v=0}^{m}\binom{m}%
{v}\left(  -1\right)  ^{m-v}\frac{\psi^{\left(  v+1\right)  }\left(  1\right)
\left(  \log^{m-v}k\right)  }{v+1}\frac{\left(  s-1\right)  ^{m}}{m!}\\
&  +\sum\limits_{m=0}^{\infty}\frac{\left(  -1\right)  ^{m}}{k}\left(
\frac{\log^{m+2}k}{\left(  m+2\right)  \left(  m+1\right)  }+\sum
\limits_{v=0}^{m}\binom{m}{v}\gamma\left(  v+1\right)  \log^{m-v}k\right)
\frac{\left(  s-1\right)  ^{m}}{m!}\\
&  -\frac{1}{k}\sum\limits_{m=0}^{\infty}\sum\limits_{j=0}^{m}\binom{m}%
{j}\left(  -1\right)  ^{m-j}\left(  \log^{m-j}k\right)  \sum\limits_{v=0}%
^{j}\binom{j}{v}\left(  -1\right)  ^{v}\psi^{\left(  j-v\right)  }\left(
1\right)  \gamma\left(  v\right)  \frac{\left(  s-1\right)  ^{m}}{m!},
\end{align*}
which yields the desired result.
\end{proof}

In particular, for $k=1$, (\ref{ev1}) coincides with the evaluation formula of
$\gamma_{H}\left(  m\right)  $ recorded in \cite{KDCC}:

\begin{corollary}
\label{ghm}(\cite[Proposition 13]{KDCC}) For $m\in\mathbb{N}\cup\left\{
0\right\}  $, we have%
\begin{align*}
\gamma_{H}\left(  m\right)   &  =\left(  -1\right)  ^{m}\zeta^{\left(
m\right)  }\left(  2\right)  +\gamma\left(  m+1\right)  \frac{\left(
-1\right)  ^{m}\psi^{\left(  m+1\right)  }\left(  1\right)  }{m+1}\\
&  +\left(  -1\right)  ^{m}\sum\limits_{v=0}^{m}\binom{m}{v}\left(
g_{m-v}i_{v,1}-\left(  -1\right)  ^{m-v}\psi^{\left(  m-v\right)  }\left(
1\right)  \gamma\left(  v\right)  \right)  .
\end{align*}
where $g_{m}$ and $i_{m,1}$ are given by (\ref{gm-im}).
\end{corollary}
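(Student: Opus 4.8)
The plan is to recognize the corollary as the single special case $k=1$ of the preceding proposition, so that the entire task reduces to substituting $k=1$ into (\ref{ev1}) and simplifying. First I would verify that the two left-hand sides agree. From the closed form (\ref{Ank}) the indicator $f_n(j)$ is identically $1$ when $k=1$, since every integer is congruent to every other modulo $1$; hence $A_n(1)=\sum_{j=1}^{n}1/j=H_n$, and therefore $\zeta_{A(1)}(s)=\zeta_H(s)$. Matching the Laurent expansion (\ref{azetaL}) at $k=1$ against the expansion of $\zeta_H$ recalled in the introduction, coefficient by coefficient in $(s-1)^m$, yields $\gamma_{A(1)}(m)=\gamma_H(m)$, so both sides of the asserted identity refer to the same constant.

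Second, I would set $k=1$ throughout the right-hand side of (\ref{ev1}). The single mechanism driving every simplification is $\log k=\log 1=0$, used together with the constant-term convention $\log^{0}k=1$. Consequently every factor of the form $\log^{m-v}k$ (resp. $\log^{m-j}k$) vanishes unless its exponent is zero, i.e. unless $v=m$ (resp. $j=m$), while every factor $\log^{m+1}k$ vanishes outright because $m+1\geq 1$. I would then pass through the terms of (\ref{ev1}) in order: the leading $(-1)^m\zeta^{(m)}(2)$ is untouched; in the $\psi^{(v+1)}$-sum only $v=m$ survives; the $g_{m-v}\,i_{v,k}$-sum survives in full, since by (\ref{gm-im}) the integral $i_{v,1}$ carries no factor of $\log k$; the isolated term built on $\log^{m+1}k$ disappears entirely; in the sum over $\gamma(v)\log k+\gamma(v+1)$ the first piece dies and only $v=m$ remains in the second, leaving $\gamma(m+1)$; and in the final double sum only $j=m$ survives, collapsing the $j$-summation and leaving one inner sum over $v$.

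Finally, I would collect the surviving pieces and tidy the signs. The only manipulation required is the elementary identity $(-1)^{m-v}=(-1)^{m}(-1)^{v}$, which lets me fold the residual $\psi^{(m-v)}(1)\gamma(v)$ terms together with the $g_{m-v}\,i_{v,1}$ terms into the single compact sum $(-1)^m\sum_{v=0}^{m}\binom{m}{v}(g_{m-v}\,i_{v,1}-(-1)^{m-v}\psi^{(m-v)}(1)\gamma(v))$ of the statement, the two isolated leftover terms accounting for the remaining summands. There is no genuine analytic obstacle here; the content is purely bookkeeping. The only points demanding care are the systematic use of the $\log^{0}k=1$ convention to decide exactly which index of each binomial sum survives the collapse, and the sign accounting in the last step, so that the $\zeta^{(m)}(2)$, $\gamma(m+1)$ and $\psi^{(m+1)}(1)$ contributions land with the correct signs to match the formula of \cite{KDCC}.
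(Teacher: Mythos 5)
Your proposal is correct and follows exactly the route the paper intends: the paper offers no separate proof of Corollary \ref{ghm}, merely asserting that it is the $k=1$ specialization of (\ref{ev1}), and your substitution-and-collapse (including the preliminary check that $A_{n}\left(1\right)=H_{n}$, hence $\zeta_{A\left(1\right)}=\zeta_{H}$ and $\gamma_{A\left(1\right)}\left(m\right)=\gamma_{H}\left(m\right)$, together with the observation that only the $\log^{0}k$ terms survive) is precisely that argument. One caveat worth recording: a literal substitution of $k=1$ into (\ref{ev1}) leaves the $\psi^{\left(m-v\right)}\left(1\right)\gamma\left(v\right)$ group with the prefactor $-\left(-1\right)^{m}\left(-1\right)^{v}$, whereas the printed corollary carries $-\left(-1\right)^{m}\left(-1\right)^{m-v}=-\left(-1\right)^{v}$, a residual discrepancy of $\left(-1\right)^{m}$ that your identity $\left(-1\right)^{m-v}=\left(-1\right)^{m}\left(-1\right)^{v}$ does not remove (and the missing operator between $\gamma\left(m+1\right)$ and the $\psi^{\left(m+1\right)}\left(1\right)$ fraction is evidently a misprint), so this points to a typographical slip in one of the paper's two displayed formulas rather than to a flaw in your approach.
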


While considering Theorem \ref{mteo1} for non-positive integer values of $s$,
we need the coefficients $D_{n,k}$ defined by the generating function%
\begin{equation}
\frac{ze^{kz}}{e^{kz}-1}\log\left(  \frac{e^{z}-1}{z}\right)  =\sum
_{n=1}^{\infty}D_{n,k}z^{n}.\label{Dnk}%
\end{equation}
The case $k=1$ is nothing but Eq. (6) of \cite{BGP1}. Since
\[
\log\left(  \frac{e^{z}-1}{z}\right)  =\sum_{n=1}^{\infty}\frac{B_{n}\left(
1\right)  }{n!n}z^{n},
\]
(cf. \cite{BGP1}) where $B_{n}\left(  x\right)  $ are the Bernoulli
polynomials defined by%
\begin{equation}
\frac{ze^{xz}}{e^{z}-1}=\sum_{n=0}^{\infty}\frac{B_{n}\left(  x\right)  }%
{n!}z^{n},\label{Bn}%
\end{equation}
$D_{n,k}$ can be written as%
\[
D_{n,k}=\frac{\left(  -1\right)  ^{n}}{n!}\sum_{j=1}^{n}\binom{n}{j}%
\frac{k^{n-1-j}}{j}B_{n-j}B_{j},
\]
where $B_{j}=B_{j}\left(  0\right)  $ are the Bernoulli numbers.

\begin{corollary}
For $m\in\mathbb{N}$, we have%
\[
\zeta_{A\left(  k\right)  }\left(  -2m\right)  =\left(  \frac{k^{2m-1}}%
{2}-\frac{1}{4m}\right)  B_{2m}.
\]

\end{corollary}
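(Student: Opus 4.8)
The plan is to evaluate $\zeta_{A(k)}(-2m)$ directly from the integral representation in Theorem~\ref{mteo1}. The key observation is that at $s=-2m$ the explicit correction terms are controlled: $\zeta(s+1)=\zeta(1-2m)=-B_{2m}/(2m)$, while $\zeta(s)=\zeta(-2m)=0$ kills both the $\psi(s)\zeta(s)$ term and the $\zeta(s)\log k$ term, leaving from the bracketed expression only the contribution of $\zeta'(s)$ at a trivial zero. The main work is therefore to compute $\Gamma(1-s)I(s,k)$ at $s=-2m$, where $\Gamma(1-s)$ is finite and $I(s,k)$ must be extracted from its contour-integral definition~(\ref{Isk}).

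First I would handle the contour integral. Since $\Gamma(1-s)=(-1)^{?}$-type factors appear, the standard device is to relate $I(s,k)$ to the Taylor coefficients $D_{n,k}$ from~(\ref{Dnk}): writing $z^{s-1}e^{kz}\log((e^{z}-1)/z)/(e^{kz}-1)=z^{s-2}\sum_{n\ge1}D_{n,k}z^{n}$, the Hankel integral picks out, by the Hankel representation of $1/\Gamma$, the residue-type coefficient. Concretely one expects $\Gamma(1-s)I(s,k)$ to reduce at $s=-2m$ to a single coefficient $D_{n,k}$ with $n$ determined by $s-2+n=0$, i.e.\ $n=2m+2$, up to a factorial/sign factor coming from $\Gamma(1-s)$ evaluated at the negative even integer. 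Thus I would establish a clean formula $\Gamma(1-s)I(s,k)\big|_{s=-2m}=(2m)!\,D_{2m+2,k}$ or the analogous normalization, and then substitute the explicit expression for $D_{n,k}$ in terms of Bernoulli numbers.

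Second I would handle the $\zeta'$ remnant. Because $\zeta(-2m)=0$ is a simple zero, the combination $-k^{-s}(\psi(s)\zeta(s)+\zeta'(s)-\zeta(s)\log k)$ evaluated at $s=-2m$ collapses to $-k^{2m}\zeta'(-2m)$; and $\zeta'(-2m)$ has the classical closed form $\zeta'(-2m)=(-1)^{m}(2m)!\,\zeta(2m+1)/(2^{2m+1}\pi^{2m})$. I would then assemble the three pieces---the $D_{2m+2,k}$ term, the constant $\zeta(1-2m)=-B_{2m}/(2m)$, and the $\zeta'(-2m)$ term---and expect the transcendental $\zeta(2m+1)$ contributions to cancel, since the final answer is rational in $B_{2m}$ and $k$. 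Tracking this cancellation is where I would be most careful, because the explicit $D_{n,k}$ sum contains products $B_{n-j}B_{j}$ that must telescope against the $\zeta'$ contribution to leave only the clean term $(k^{2m-1}/2-1/(4m))B_{2m}$.

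The hard part will be the bookkeeping in the contour-integral evaluation: correctly normalizing the Hankel integral so that $I(-2m,k)$ (which is \emph{not} one of the vanishing values $I(n,k)=0$ used for positive $n$) maps to the right Taylor coefficient, and matching the sign and factorial factors supplied by $\Gamma(1-s)$ at $s=-2m$. Once that normalization is pinned down, substituting the Bernoulli expression for $D_{2m+2,k}$ and simplifying against $\zeta(1-2m)$ and $\zeta'(-2m)$ should be a routine, if somewhat lengthy, reduction. I expect that only the $j=2m+1$ and $j=1$ boundary terms in the $D_{2m+2,k}$ sum survive after cancellation, producing the two summands in the stated result.
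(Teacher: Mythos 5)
Your overall strategy is the paper's: evaluate (\ref{2}) at $s=-2m$, compute the Hankel integral by the residue theorem via the coefficients $D_{n,k}$ of (\ref{Dnk}), and clean up the zeta terms. However, there are two genuine errors in the execution, and the first one would make the proof fail. You claim that $\zeta(-2m)=0$ ``kills'' the $\psi(s)\zeta(s)$ term, leaving only $-k^{2m}\zeta'(-2m)$ from the bracket. But $\psi(s)$ has a simple pole at $s=-2m$ with residue $-1$, so $\psi(s)\zeta(s)$ is a $0\cdot\infty$ form there; the correct value is
\[
\lim_{s\rightarrow-2m}\psi\left(  s\right)  \zeta\left(  s\right)
=\lim_{s\rightarrow-2m}\left(  s+2m\right)  \psi\left(  s\right)  \cdot
\frac{\zeta\left(  s\right)  }{s+2m}=-\zeta^{\prime}\left(  -2m\right)  ,
\]
which cancels the explicit $+\zeta'(-2m)$ inside the bracket, so the entire $-k^{-s}(\cdots)$ term vanishes. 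Under your reading, you are left with an uncancelled transcendental $-k^{2m}\zeta'(-2m)$ and you then hope it cancels against the contour-integral contribution; it cannot, because $D_{n,k}$ is a rational combination of Bernoulli numbers. The cancellation you anticipate happens entirely inside the bracket, not between the bracket and the $D$-coefficients.

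The second error is an off-by-one in the residue extraction. The integrand of (\ref{Isk}) is $z^{s-1}$ times $\sum_{n\geq1}D_{n,k}z^{n-1}$, i.e.\ $\sum_{n}D_{n,k}z^{s+n-2}$, and the residue at the origin picks out the exponent $-1$, not $0$; at $s=-2m$ this gives $n=2m+1$, so $I(-2m,k)=D_{2m+1,k}$, not $D_{2m+2,k}$. This matters: in $D_{2m+1,k}$ the indices $j$ and $2m+1-j$ have opposite parity, so the only surviving products are $B_{1}B_{2m}$ (from $j=1$ and $j=2m$), yielding exactly the two-term answer $-\frac{B_{2m}B_{1}}{(2m)!}\left(k^{2m-1}+\frac{1}{2m}\right)$, whereas $D_{2m+2,k}$ has both indices of equal parity and many surviving even--even products $B_{2a}B_{2b}$, which would not reduce to the stated formula. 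Your guess that the $j=1$ and $j=2m+1$ terms survive is also inconsistent with this, since $B_{2m+1}=0$ for $m\geq1$. With the index corrected to $2m+1$ and the $\psi\zeta$ limit evaluated properly, the rest of your plan goes through as in the paper.
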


\begin{proof}
Theorem \ref{mteo1} yields to%
\begin{align*}
\zeta_{A\left(  k\right)  }\left(  -2m\right)   &  =\Gamma\left(  2m+1\right)
I\left(  -2m,k\right)  +\zeta\left(  1-2m\right)  \\
&  -k^{2m}\left(  \lim_{s\rightarrow-2m}\psi\left(  s\right)  \zeta\left(
s\right)  +\zeta^{\prime}\left(  -2m\right)  -\zeta\left(  -2m\right)  \log
k\right)  .
\end{align*}
The values $I\left(  -2m,k\right)  $ can be evaluated by using the residue
theorem with (\ref{Dnk}) as%
\begin{align*}
I\left(  -2m,k\right)   &  =\frac{1}{2\pi i}\int_{C_{\epsilon}}\frac{ze^{kz}%
}{e^{kz}-1}\log\left(  \frac{e^{z}-1}{z}\right)  z^{-2m-1}\frac{dz}{z}\\
&  =D_{2m+1,k}\\
&  =-\frac{B_{2m}B_{1}}{\left(  2m\right)  !}\left(  k^{2m-1}+\frac{1}%
{2m}\right)  ,
\end{align*}
where we have used that $B_{2j+1}=0$ for $j\geq1$. Utilizing the facts
\[
\lim_{s\rightarrow-2m}\frac{\zeta\left(  s\right)  }{s+2m}=\zeta^{\prime
}\left(  -2m\right)  \text{ and }\lim_{s\rightarrow-2m}\left(  s+2m\right)
\psi\left(  s\right)  =-1
\]
gives%
\[
\lim_{s\rightarrow-2m}\psi\left(  s\right)  \zeta\left(  s\right)
=-\zeta^{\prime}\left(  -2m\right)  .
\]
Therefore, we deduce that%
\[
\zeta_{A\left(  k\right)  }\left(  -2m\right)  =B_{2m}\left(  \frac{k^{2m-1}%
}{2}-\frac{1}{4m}\right)  ,
\]
upon the use of $\zeta\left(  -2m\right)  =0$, $m\in\mathbb{N}$.
\end{proof}

We now consider the cases $s=-m$ when $m=0$ and $m=2j-1,$\ $j\in\mathbb{N}%
,$\ in Theorem \ref{mteo1} to obtain Laurent series expansions of
$\zeta_{A\left(  k\right)  }\left(  s\right)  $ in the form\
\[
\zeta_{A\left(  k\right)  }\left(  s\right)  =\frac{a_{-1}}{s+m}%
+a_{0}+O\left(  s+m\right)  .
\]

\begin{corollary}
In a neighborhood of zero we have%
\[
\zeta_{A\left(  k\right)  }\left(  s\right)  =\frac{1}{2s}+\gamma_{A\left(
k\right)  ,0}\left(  0\right)  +O\left(  s\right)  ,
\]
where%
\[
\gamma_{A\left(  k\right)  ,0}\left(  0\right)  =\frac{1}{2k}+\frac{\gamma}%
{2}.
\]

\end{corollary}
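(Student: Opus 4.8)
The plan is to read off the Laurent expansion near $s=0$ directly from the closed form (\ref{2}) of Theorem \ref{mteo1}, expanding each of its five constituent terms through order $s^{0}$ and collecting the principal and constant parts. Four of the five terms are elementary once standard local data for $\zeta$, $\psi$ and $\Gamma$ are recalled; the only term needing separate attention is $\Gamma(1-s)I(s,k)$, whose contribution is governed by the single value $I(0,k)$.

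First I would evaluate $I(0,k)$. Since $s=0$ is a non-positive integer, $z^{s-1}=z^{-1}$ is single-valued, so the Hankel contour collapses to the circle $C_{\varepsilon}$ and the integral equals the residue at the origin, exactly as in the computation of $I(-2m,k)$ carried out above. Writing $\frac{e^{kz}}{e^{kz}-1}\log\left(\frac{e^{z}-1}{z}\right)=\frac{1}{z}\sum_{n\ge1}D_{n,k}z^{n}$ via (\ref{Dnk}), the coefficient of $z^{-1}$ in $z^{-1}\cdot\frac1z\sum_{n\ge1}D_{n,k}z^{n}$ is $D_{1,k}$. Hence $I(0,k)=D_{1,k}=\frac{1}{2k}$ (using $B_{0}=1$, $B_{1}=-1/2$). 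Because $\Gamma(1-s)=1+\gamma s+O(s^{2})$ is analytic and nonvanishing at $s=0$, the term $\Gamma(1-s)I(s,k)$ has no pole there and its constant part is $I(0,k)=\frac{1}{2k}$.

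Next I would expand the remaining terms using $\zeta(s+1)=\frac1s+\gamma+O(s)$, $\psi(s)=-\frac1s-\gamma+O(s)$, $\zeta(s)=-\frac12+\zeta'(0)s+O(s^{2})$ with $\zeta'(0)=-\frac12\log(2\pi)$, and $k^{-s}=1-s\log k+O(s^{2})$. The product $\psi(s)\zeta(s)$ carries the simple pole $\frac{1}{2s}$ with constant part $\frac12\log(2\pi)+\frac\gamma2$; multiplying by $-k^{-s}$ injects the extra $+\frac12\log k$ into its constant term. Summing principal parts, $\zeta(s+1)$ gives $\frac1s$ and $-k^{-s}\psi(s)\zeta(s)$ gives $-\frac{1}{2s}$, yielding the claimed $\frac{1}{2s}$; this is consistent with $\mathrm{Res}(\zeta_{A(k)},s=0)=1+\zeta(0)=\frac12$ recorded after Theorem \ref{mteo1}.

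Finally I would add the constant terms: $\frac{1}{2k}$ from $\Gamma(1-s)I(s,k)$, $\gamma$ from $\zeta(s+1)$, $-\frac12\log(2\pi)-\frac\gamma2+\frac12\log k$ from $-k^{-s}\psi(s)\zeta(s)$, $-\zeta'(0)=\frac12\log(2\pi)$ from $-k^{-s}\zeta'(s)$, and $-\frac12\log k$ from $k^{-s}\zeta(s)\log k$. The $\log(2\pi)$ and $\log k$ contributions cancel in pairs, leaving $\frac{1}{2k}+\frac\gamma2$, as asserted. The main bookkeeping obstacle is precisely this cancellation: because $\psi(s)\zeta(s)$ has a genuine pole at $s=0$, the factor $k^{-s}$ feeds a $\log k$ term into the constant, and one must retain the $O(s)$ coefficient of $\zeta(s)$ (that is, $\zeta'(0)$) to produce the $\log(2\pi)$ terms; verifying that these transcendental pieces annihilate against the fourth and fifth terms is the one step to carry out with care.
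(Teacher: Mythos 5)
Your proposal is correct and follows essentially the same route as the paper: expand each term of (\ref{2}) around $s=0$ using the local data for $\zeta(s+1)$, $\psi(s)$, $\zeta(s)$ and $k^{-s}$, and evaluate $I(0,k)=D_{1,k}=\frac{1}{2k}$ by collapsing the Hankel contour and reading off the residue from (\ref{Dnk}). Your bookkeeping with the explicit $\log(2\pi)$ and $\log k$ cancellations is just a more granular version of the paper's observation that the $\zeta'(0)$ and $\log k$ contributions annihilate, leaving $\frac{1+\zeta(0)}{s}+\frac{1}{2k}+\gamma\left(1+\zeta(0)\right)$.
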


\begin{proof}
From Theorem \ref{mteo1} and the expansions%
\begin{align*}
\zeta(s+1) &  =\frac{1}{s}+\gamma\left(  0\right)  +O\left(  s\right)  ,\text{
}\zeta(s)=\zeta(0)+\zeta^{\prime}(0)s+O\left(  s^{2}\right)  ,\\
\psi(s) &  =-\frac{1}{s}+\psi(1)+O(s)
\end{align*}
we have
\[
\zeta_{A\left(  k\right)  }\left(  s\right)  =I\left(  0,k\right)
+\frac{1+\zeta(0)}{s}+\gamma-\psi(1)\zeta(0)+O\left(  s\right)  .
\]
Using the residue theorem with (\ref{Dnk}) we obtain%
\begin{align*}
I\left(  0,k\right)   &  =\frac{1}{2\pi i}\int_{C}\frac{ze^{kz}}{e^{kz}-1}%
\log\left(  \frac{e^{z}-1}{z}\right)  z^{-1}\frac{dz}{z}\\
&  =D_{1,k}=\frac{1}{2k},
\end{align*}
which gives%
\[
\zeta_{A\left(  k\right)  }\left(  s\right)  =\frac{1+\zeta(0)}{s}+\frac
{1}{2k}+\gamma\left(  1+\zeta(0)\right)  +O\left(  s\right)  .
\]

\end{proof}

\begin{corollary}
Let $j\geq1$ be an integer. In a neighborhood of $s=1-2j$ we have
\[
\zeta_{A\left(  k\right)  }\left(  s\right)  =\frac{\zeta(1-2j)k^{2j-1}%
}{s+2j-1}+\gamma_{A\left(  k\right)  ,1-2j}\left(  0\right)  +O(s+2j-1),
\]
where%
\[
\gamma_{A\left(  k\right)  ,1-2j}\left(  0\right)  =\left\{
\begin{array}
[c]{lc}%
\dfrac{1}{24k}-\dfrac{1}{4}-k\psi(2)\zeta(-1), & j=1,\smallskip\\
\dfrac{k^{2j-1}}{4j}%
{\displaystyle\sum\limits_{v=1}^{j}}
\binom{2j}{2v}\dfrac{B_{2j-2v}B_{2v}}{vk^{2v}}-k^{2j-1}\psi(2j)\zeta(1-2j), &
j>1.
\end{array}
\right.
\]

\end{corollary}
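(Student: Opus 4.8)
The plan is to insert the representation (\ref{2}) of Theorem \ref{mteo1} and expand each of its terms in a Laurent series about the point $s=1-2j$, retaining only the principal part and the constant term. Writing $w=s+2j-1$, I first locate the singularity. The factor $\Gamma(1-s)$ is analytic at $s=1-2j$ (there $1-s=2j$, away from the poles of $\Gamma$), while $\zeta(s+1)$, $\zeta'(s)$ and $\zeta(s)\log k$ are all analytic there since $\zeta$ has its only pole at $s=1$ and $2-2j\neq1$ for integer $j\ge1$. Hence the sole pole at $s=1-2j$ is produced by $-k^{-s}\psi(s)\zeta(s)$, coming from the simple pole of $\psi$ at the negative integer $1-2j=-(2j-1)$.

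Next I would evaluate the analytic pieces at $s=1-2j$. Since $\sin\big((1-2j)\pi\big)=0$, the representation (\ref{10}) shows the real-axis part of $I(s,k)$ drops out, so by (\ref{Isk}) and (\ref{Dnk}) the value $I(1-2j,k)$ is the residue at $z=0$ of $z^{-2j-1}\sum_{n\ge1}D_{n,k}z^{n}$, namely the coefficient $D_{2j,k}$; thus $\Gamma(1-s)I(s,k)$ contributes $\Gamma(2j)D_{2j,k}=(2j-1)!\,D_{2j,k}$ to the constant term, exactly as in the evaluation of $I(-2m,k)$ above. Likewise $\zeta(s+1)$ contributes $\zeta(2-2j)$, which vanishes for $j>1$ (it equals $\zeta(-2(j-1))=0$) and equals $\zeta(0)=-1/2$ when $j=1$.

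For the singular term I would use the pole expansion $\psi(s)=-\dfrac{1}{s+2j-1}+\psi(2j)+O(s+2j-1)$ together with $k^{-s}=k^{2j-1}(1-w\log k+\cdots)$ and $\zeta(s)=\zeta(1-2j)+\zeta'(1-2j)w+\cdots$. Multiplying out, the principal part of $-k^{-s}\psi(s)\zeta(s)$ is $\dfrac{k^{2j-1}\zeta(1-2j)}{s+2j-1}$, which is the stated residue, and its constant term is $k^{2j-1}\big(\zeta'(1-2j)-\psi(2j)\zeta(1-2j)-\zeta(1-2j)\log k\big)$. Adding the constant terms $-k^{2j-1}\zeta'(1-2j)$ from $-k^{-s}\zeta'(s)$ and $k^{2j-1}\zeta(1-2j)\log k$ from $k^{-s}\zeta(s)\log k$, the $\zeta'(1-2j)$ and $\log k$ contributions cancel, leaving $-k^{2j-1}\psi(2j)\zeta(1-2j)$. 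Collecting everything gives the clean intermediate formula $\gamma_{A(k),1-2j}(0)=(2j-1)!\,D_{2j,k}+\zeta(2-2j)-k^{2j-1}\psi(2j)\zeta(1-2j)$.

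Finally I would simplify $(2j-1)!\,D_{2j,k}$ using the closed form for $D_{n,k}$ and the vanishing of the odd-index Bernoulli numbers. Setting $l=2v$ in the sum for $D_{2j,k}$ collapses it to $\frac{k^{2j-1}}{4j}\sum_{v=1}^{j}\binom{2j}{2v}\frac{B_{2j-2v}B_{2v}}{v\,k^{2v}}$ for $j>1$, which combined with $\zeta(2-2j)=0$ yields the second case. For $j=1$ the exceptional index $l=1$ survives (because $B_{2j-1}=B_1\neq0$ only then), giving $D_{2,k}=\frac14+\frac{1}{24k}$, and with $\zeta(0)=-1/2$ one obtains the first case. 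The point demanding care is the constant $\psi(2j)$ in the digamma pole expansion and the resulting two-fold cancellation; verifying that the $\zeta'(1-2j)$ and $\log k$ terms really drop out is where the computation must be carried out accurately, while the Bernoulli-number reduction distinguishing $j=1$ from $j>1$ is the only genuinely case-dependent step.
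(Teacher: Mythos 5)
Your proposal is correct and follows essentially the same route as the paper: expand (\ref{2}) at $s=1-2j$, isolate the pole coming from $\psi(s)\zeta(s)$, evaluate $I(1-2j,k)=D_{2j,k}$ by the residue theorem with (\ref{Dnk}), and split the Bernoulli sum into the $j=1$ and $j>1$ cases. You are in fact slightly more explicit than the paper about the cancellation of the $\zeta'(1-2j)$ and $\log k$ contributions, which the paper absorbs silently into its second displayed line.
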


\begin{proof}
From Theorem \ref{mteo1} and%
\[
\psi(s)=-\frac{1}{s+m}+\psi(m+1)+O(s+m)
\]
we see that%
\begin{align*}
\zeta_{A\left(  k\right)  }\left(  s\right)   &  =\Gamma\left(  2j\right)
I\left(  1-2j,k\right)  +\zeta\left(  2-2j\right)  \\
&  -\frac{1}{k^{s}}\left(  -\frac{\zeta(1-2j)}{s+2j-1}+\psi(2j)\zeta
(1-2j)-\zeta\left(  1-2j\right)  \log k\right)  +O(s+2j-1)\\
&  =\Gamma\left(  2j\right)  I\left(  1-2j,k\right)  +\zeta\left(
2-2j\right)  \\
&  -k^{2j-1}\left(  -\frac{\zeta(1-2j)}{s+2j-1}+\psi(2j)\zeta(1-2j)\right)
+O(s+2j-1).
\end{align*}
Again, from residue theorem with (\ref{Dnk}) it follows that
\begin{align*}
I\left(  1-2j,k\right)   &  =\frac{1}{2\pi i}\int_{C}\frac{ze^{kz}}{e^{kz}%
-1}\log\left(  \frac{e^{z}-1}{z}\right)  z^{-2j}\frac{dz}{z}\\
&  =D_{2j,k}=\frac{1}{\left(  2j\right)  !}\sum_{v=1}^{2j}\binom{2j}{v}%
\frac{k^{2j-1-v}}{v}B_{2j-v}B_{v}.
\end{align*}
If $j=1$, then
\[
I\left(  -1,k\right)  =\frac{1}{2}\left(  2B_{1}B_{1}+\frac{1}{2k}%
B_{2}\right)  =\frac{1}{4}+\frac{1}{24k}%
\]
and if $j>1$, then%
\[
I\left(  1-2j,k\right)  =\frac{1}{\left(  2j\right)  !}\sum_{v=1}^{j}%
\binom{2j}{2v}\frac{k^{2j-1-2v}}{2v}B_{2j-2v}B_{2v}.
\]
Combining the above results, with the use of $\zeta\left(  0\right)  =-1/2$
and $\zeta\left(  2-2j\right)  =0$ for $j>1$, we arrive at%
\[
\zeta_{A\left(  k\right)  }\left(  s\right)  =\frac{\zeta(1-2j)k^{2j-1}%
}{s+2j-1}+\gamma_{A\left(  k\right)  ,1-2j}\left(  0\right)  +O(s+2j-1).
\]

\end{proof}

\subsection{A limit representation for $\gamma_{A\left(  k\right)  }\left(
m\right)  $}

In this section we prove the following limit representation for $\gamma
_{A\left(  k\right)  }\left(  m\right)  $.

\begin{theorem}
\label{mteo2}The coefficients $\gamma_{A\left(  k\right)  }\left(  m\right)  $
can be determined by%
\begin{equation}
\gamma_{A\left(  k\right)  }\left(  m\right)  =\lim_{x\rightarrow\infty
}\left(  \sum_{n\leq x}\frac{A_{n}\left(  k\right)  \log^{m}n}{n}-\frac{1}%
{k}\frac{\log^{m+2}x}{m+2}-\frac{\gamma}{k}\frac{\log^{m+1}x}{m+1}\right)  .
\label{lim}%
\end{equation}

\end{theorem}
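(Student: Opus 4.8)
The plan is to isolate the part of $\zeta_{A\left(k\right)}\left(s\right)$ that is analytic at $s=1$ and to recover its Taylor coefficients from regularized partial sums, following the Briggs--Buschman scheme. Writing $\epsilon=s-1$, I set
\[
G\left(s\right)=\zeta_{A\left(k\right)}\left(s\right)-\frac{1}{k\left(s-1\right)^{2}}-\frac{\gamma}{k\left(s-1\right)},
\]
which by (\ref{azetaL}) is analytic near $s=1$ with $G\left(s\right)=\sum_{m=0}^{\infty}\frac{\left(-1\right)^{m}\gamma_{A\left(k\right)}\left(m\right)}{m!}\epsilon^{m}$. Thus $\gamma_{A\left(k\right)}\left(m\right)$ is, up to the factor $\left(-1\right)^{m}m!$, the $\epsilon^{m}$-coefficient of $G$, and it suffices to produce an explicit sequence $\Phi_{N}$ with $\Phi_{N}\to G$ whose coefficients are the bracketed expressions in (\ref{lim}).

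The key analytic step is a uniform asymptotic for the truncated Dirichlet series. For real $s>1$ I would estimate the tail $\sum_{n>N}A_{n}\left(k\right)n^{-s}$ by Abel summation against the summatory function, using the averaged behaviour $\sum_{n\leq x}A_{n}\left(k\right)=\frac{x\log x}{k}+\frac{\left(\gamma-1\right)x}{k}+O\left(\log x\right)$, which is what actually feeds the estimate (pointwise, $A_{n}\left(k\right)=\frac{1}{k}\left(\log n+\gamma\right)+O\left(1\right)$), to obtain
\[
\sum_{n\leq N}\frac{A_{n}\left(k\right)}{n^{s}}=\zeta_{A\left(k\right)}\left(s\right)-\frac{1}{k}\frac{N^{1-s}\log N}{s-1}-\frac{1}{k}\frac{N^{1-s}}{\left(s-1\right)^{2}}-\frac{\gamma}{k}\frac{N^{1-s}}{s-1}+o\left(1\right),
\]
the error tending to $0$ as $N\to\infty$ uniformly for $s$ on a small circle about $s=1$. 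The three explicit terms are exactly the principal parts produced by the double pole of coefficient $1/k$ and the simple pole of coefficient $\gamma/k$; their appearance mirrors the pole structure in (\ref{azetaL}), and indeed integrating the summatory asymptotic against $x^{-s-1}$ reproduces the same double and simple poles.

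Subtracting the pole terms of $G$ gives $G\left(s\right)=\lim_{N\to\infty}\Phi_{N}\left(s\right)$, where $\Phi_{N}$ is the left side above together with $-\frac{1}{k\left(s-1\right)^{2}}-\frac{\gamma}{k\left(s-1\right)}$. Expanding every ingredient in powers of $\epsilon$, using $n^{-s}=n^{-1}\sum_{m}\frac{\left(-\epsilon\right)^{m}}{m!}\log^{m}n$ and $N^{1-s}=e^{-\epsilon\log N}$, the partial sum contributes $\sum_{m}\frac{\left(-\epsilon\right)^{m}}{m!}\sum_{n\leq N}\frac{A_{n}\left(k\right)\log^{m}n}{n}$, while the $N$-dependent poles combine so that the $1/\epsilon$ singularities cancel (as they must, $G$ being regular) and the regular part yields precisely $-\frac{1}{k}\frac{\left(-1\right)^{m}\log^{m+2}N}{\left(m+2\right)m!}$ and $-\frac{\gamma}{k}\frac{\left(-1\right)^{m}\log^{m+1}N}{\left(m+1\right)!}$. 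Matching the $\epsilon^{m}$-coefficient of $G$ with the limit of these coefficients and cancelling $\left(-1\right)^{m}/m!$ (using $m!/\left(m+1\right)!=1/\left(m+1\right)$) produces (\ref{lim}) with $N$ in place of $x$; since $A_{n}\left(k\right)$ jumps only at integers, replacing $\log^{m+2}N,\log^{m+1}N$ by $\log^{m+2}x,\log^{m+1}x$ alters the bracket by $O\left(\log^{m+1}x/x\right)\to0$, so the continuous limit agrees.

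The main obstacle is making the second paragraph rigorous: controlling the residue-class fluctuations of $A_{n}\left(k\right)$ and proving the error is $o\left(1\right)$ \emph{uniformly} in a neighborhood of $s=1$, so that term-by-term coefficient extraction is legitimate. I would secure this uniformity through the Abel summation together with the summatory asymptotic above, and then justify interchanging $\lim_{N}$ with the extraction of Taylor coefficients via Cauchy's coefficient formula on a fixed small circle $\left\vert s-1\right\vert=\rho$, on which $\Phi_{N}\to G$ uniformly; this converts convergence of functions into convergence of each coefficient and completes the argument.
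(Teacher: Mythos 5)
Your argument is correct and rests on the same two pillars as the paper's proof --- the summatory asymptotic $\sum_{n\le x}A_n(k)=\frac{x}{k}\log x+\frac{x}{k}(\gamma-1)+O(\log x)$ (the paper's Lemma \ref{lemAnk}, which you assert rather than derive; it is the one substantive computation in the paper's argument, though it follows in a few lines from $A_n(k)=\sum_{v\le n,\,v\equiv n\,(k)}1/v$) and Abel summation against that summatory function. Where you genuinely diverge is in how the $m$-th coefficient is extracted. The paper stays entirely in real variables: it applies Abel summation with $v(x)=x^u\log^m x$ for $u<0$, uses the identity $\frac{d}{dt}(t^u\log^m t)=\frac{d^m}{du^m}(ut^{u-1})$ to pull out $m$ derivatives of the regularized integral $f(s,k)=s\int_1^\infty t^{-s-1}E(t,k)\,dt$, and then sets $u=-1$, identifying $f^{(m)}(1,k)=(-1)^m\gamma_{A(k)}(m)$ from the Laurent data (with a small extra constant bookkeeping at $m=0$ that cancels $C_k$). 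You instead apply Abel summation once, as a function of complex $s$, to get the uniform statement
\begin{equation*}
\sum_{n\le N}\frac{A_n(k)}{n^s}=\zeta_{A(k)}(s)-\frac{1}{k}\frac{N^{1-s}\log N}{s-1}-\frac{1}{k}\frac{N^{1-s}}{(s-1)^2}-\frac{\gamma}{k}\frac{N^{1-s}}{s-1}+o(1)
\end{equation*}
on a circle about $s=1$, and then harvest all $m$ at once via Cauchy's coefficient formula; I checked that your pole terms and the resulting $\epsilon^m$-coefficients ($-\frac{(-1)^m}{m!}\frac{\log^{m+2}N}{k(m+2)}$ and $-\frac{(-1)^m}{m!}\frac{\gamma\log^{m+1}N}{k(m+1)}$) are exactly right. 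The uniformity you flag as the main obstacle is indeed securable: the error is $-s\int_N^\infty t^{-s-1}E(t,k)\,dt+O(N^{-\operatorname{Re}s}\log N)=O(N^{-\sigma_0}\log N)$ uniformly on $|s-1|=\rho<1$, together with an analytic-continuation remark to move the identity from $\operatorname{Re}s>1$ onto the whole circle. Your route buys a cleaner, one-shot treatment of all $m$ and avoids the paper's term-by-term bookkeeping of $C_k$ and the $\frac{d^m}{du^m}1$ case distinction; the paper's route buys elementarity, needing no uniform estimates on a complex circle and no interchange of limit and coefficient extraction.
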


For the proof, we shall need the following lemmas.

\begin{lemma}
[Abel summation formula](see \cite[Theorem 4.2]{Ap}) \label{lem1}If $\left\{
b_{1},b_{2},b_{3},\ldots\right\}  $ is a sequence of complex numbers and
$v\left(  x\right)  $ is a function with a continuous derivative for
$x>\alpha$, then%
\[
\sum_{n\leq x}b_{n}v\left(  n\right)  =\left(  \sum_{n\leq x}b_{n}\right)
v\left(  x\right)  -\int\limits_{\alpha}^{x}\left(  \sum_{n\leq t}%
b_{n}\right)  v^{\prime}\left(  t\right)  dt.
\]

\end{lemma}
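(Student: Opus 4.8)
The plan is to reduce the identity to a finite summation-by-parts combined with a piecewise evaluation of the integral, exploiting the fact that the partial-sum function is a step function. Write $A(t)=\sum_{n\le t}b_{n}$, so that $A$ is constant on each half-open interval $[m,m+1)$ between consecutive integers, with $A(t)=0$ for $t<1$; in particular, the fact that the left-hand sum runs over \emph{all} $n\le x$ (rather than $\alpha<n\le x$) tacitly requires $\alpha<1$, so that $A(\alpha)=0$ and no boundary term at $\alpha$ survives. Setting $N=\lfloor x\rfloor$, the first step is to record the discrete Abel identity obtained by writing $b_{n}=A(n)-A(n-1)$ (with $A(0)=0$) and shifting the index of summation:
\[
\sum_{n\le x}b_{n}v(n)=\sum_{n=1}^{N}\bigl(A(n)-A(n-1)\bigr)v(n)=A(N)v(N)+\sum_{m=1}^{N-1}A(m)\bigl(v(m)-v(m+1)\bigr).
\]

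Second, I would evaluate the integral on the right-hand side of the claimed formula by splitting $[\alpha,x]$ at the integers. Since $A$ is constant on each subinterval and $v$ has a continuous derivative, the fundamental theorem of calculus applies on each piece and gives
\[
\int_{\alpha}^{x}A(t)v^{\prime}(t)\,dt=\sum_{m=1}^{N-1}A(m)\bigl(v(m+1)-v(m)\bigr)+A(N)\bigl(v(x)-v(N)\bigr),
\]
where the contribution of $[\alpha,1)$ vanishes because $A\equiv0$ there. Using $A(x)=A(N)$ (valid since $N\le x<N+1$), the quantity $A(x)v(x)-\int_{\alpha}^{x}A(t)v^{\prime}(t)\,dt$ telescopes: the two $A(N)v(x)$ terms cancel, and what remains is exactly $A(N)v(N)+\sum_{m=1}^{N-1}A(m)\bigl(v(m)-v(m+1)\bigr)$, which coincides with the discrete identity from the first step. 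This establishes the formula.

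The only delicate point, and the step I would treat most carefully, is the bookkeeping at the two endpoints: one must check that the lower endpoint contributes nothing (guaranteed by $A\equiv0$ below the first index, i.e.\ by the implicit assumption $\alpha<1$) and that the top interval $[N,x]$ is accounted for correctly, so that the boundary term $A(x)v(x)$ pairs with the $A(N)\bigl(v(x)-v(N)\bigr)$ piece of the integral rather than leaving a spurious term. A cleaner but less elementary alternative is to observe that the left-hand side equals the Riemann--Stieltjes integral $\int_{\alpha}^{x}v\,dA$ and invoke Riemann--Stieltjes integration by parts, $\int_{\alpha}^{x}v\,dA=\bigl[vA\bigr]_{\alpha}^{x}-\int_{\alpha}^{x}A\,dv$, together with $\int_{\alpha}^{x}A\,dv=\int_{\alpha}^{x}A(t)v^{\prime}(t)\,dt$; this reproduces the identity in a single line, at the cost of appealing to the existence and integration-by-parts theorems for Stieltjes integrals against a step function.
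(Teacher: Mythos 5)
Your proof is correct, and it matches the source: the paper gives no argument for this lemma at all, citing Apostol's Theorem 4.2, and your telescoping identity plus piecewise evaluation of $\int_{\alpha}^{x}A(t)v'(t)\,dt$ over the intervals where the step function $A(t)=\sum_{n\le t}b_{n}$ is constant is precisely the standard proof found there. You also correctly flag the one point the paper's statement leaves implicit — that the absence of a boundary term at $\alpha$ tacitly requires $A(\alpha)=0$, i.e.\ $\alpha<1$ — which is consistent with how the lemma is applied in Lemma \ref{lem2}, where the integral may then be started at $t=1$.
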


The following lemma follows by setting $b_{n}=A_{n}\left(  k\right)  $ and
$v\left(  x\right)  =x^{-s}$ in Lemma \ref{lem1}.

\begin{lemma}
\label{lem2}For $\operatorname{Re}\left(  s\right)  >1$ we have%
\[
\zeta_{A\left(  k\right)  }\left(  s\right)  =s\int\limits_{1}^{\infty
}t^{-s-1}\left(  \sum_{n\leq t}A_{n}\left(  k\right)  \right)  dt.
\]

\end{lemma}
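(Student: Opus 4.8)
The plan is to follow the hint literally: apply the Abel summation formula of Lemma \ref{lem1} with $b_{n}=A_{n}\left(  k\right)$ and $v\left(  x\right)  =x^{-s}$, and then let the truncation parameter tend to infinity. Since $v^{\prime}\left(  t\right)  =-s\,t^{-s-1}$, taking base point $\alpha=1$, Lemma \ref{lem1} gives, for every $x\geq1$,
\[
\sum_{n\leq x}\frac{A_{n}\left(  k\right)  }{n^{s}}=\left(  \sum_{n\leq x}A_{n}\left(  k\right)  \right)  x^{-s}+s\int_{1}^{x}\left(  \sum_{n\leq t}A_{n}\left(  k\right)  \right)  t^{-s-1}\,dt.
\]
The lower endpoint $\alpha=1$ is admissible because the partial-sum function $t\mapsto\sum_{n\leq t}A_{n}\left(  k\right)$ equals the constant $A_{1}\left(  k\right)  =1$ on $\left[  1,2\right)$, so shifting the base point into $\left[  0,1\right)$ leaves the integral unchanged. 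It then remains to justify passing to the limit $x\rightarrow\infty$ in all three terms.

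The single genuine ingredient is an elementary growth estimate for the summatory function. From the representation $A_{n}\left(  k\right)  =\sum_{\substack{1\leq v\leq n\\v\equiv n\,\left(  \operatorname{mod}k\right)  }}1/v$ underlying (\ref{Ank}), one reads off $A_{n}\left(  k\right)  \leq H_{n}\leq1+\log n$ (each $A_{n}(k)$ being a subsum of positive terms of $\sum_{v=1}^{n}1/v$), and hence $\sum_{n\leq x}A_{n}\left(  k\right)  =O\left(  x\log x\right)$. This bound is exactly what is needed to control both the boundary term and the tail of the integral, and establishing it is the only step that is not purely mechanical; I expect it to be the main (though minor) obstacle.

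Given the estimate, the three limits are immediate for $\operatorname{Re}\left(  s\right)  >1$. The left-hand series converges to $\zeta_{A\left(  k\right)  }\left(  s\right)$ since $A_{n}\left(  k\right)  /n^{s}=O\left(  n^{-\operatorname{Re}\left(  s\right)  }\log n\right)$; the boundary term satisfies $\left(  \sum_{n\leq x}A_{n}\left(  k\right)  \right)  x^{-s}=O\left(  x^{1-\operatorname{Re}\left(  s\right)  }\log x\right)  \rightarrow0$; and the integrand obeys $\left(  \sum_{n\leq t}A_{n}\left(  k\right)  \right)  t^{-s-1}=O\left(  t^{-\operatorname{Re}\left(  s\right)  }\log t\right)$, which is absolutely integrable on $\left[  1,\infty\right)$, so $s\int_{1}^{x}\rightarrow s\int_{1}^{\infty}$. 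Letting $x\rightarrow\infty$ in the displayed identity then yields $\zeta_{A\left(  k\right)  }\left(  s\right)  =s\int_{1}^{\infty}t^{-s-1}\left(  \sum_{n\leq t}A_{n}\left(  k\right)  \right)  dt$, as claimed. Note that no interchange of summation and integration is required: the result is simply the limit of a finite Abel-summation identity, so the argument is self-contained once the growth bound is in hand.
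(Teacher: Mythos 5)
Your proposal is correct and follows exactly the paper's route: the paper proves Lemma \ref{lem2} by the one-line remark that it ``follows by setting $b_{n}=A_{n}\left(k\right)$ and $v\left(x\right)=x^{-s}$ in Lemma \ref{lem1}.'' You have merely supplied the routine details left implicit there --- the bound $A_{n}\left(k\right)\leq H_{n}=O\left(\log n\right)$, hence $\sum_{n\leq x}A_{n}\left(k\right)=O\left(x\log x\right)$, which kills the boundary term and gives absolute convergence of the integral for $\operatorname{Re}\left(s\right)>1$ --- and these details are all accurate.
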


\begin{lemma}
\label{lemAnk}We have%
\[
\sum_{n\leq x}A_{n}\left(  k\right)  =\frac{x}{k}\log\left(  x\right)
+\frac{x}{k}\left(  \gamma-1\right)  +C_{k}+O\left(  \log x\right)  ,
\]
where%
\begin{equation}
C_{k}=\frac{1}{2}+\frac{\gamma}{k}-\gamma-\frac{3+k}{2k}\log k+\frac{1}{k^{2}%
}\sum_{j=1}^{k}j\gamma\left(  0,\frac{j}{k}\right)  . \label{Ck}%
\end{equation}

\end{lemma}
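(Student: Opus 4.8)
The plan is to evaluate $\sum_{n\le x}A_{n}(k)$ by inserting the explicit description $A_{n}(k)=\sum_{v\le n,\,v\equiv n\,(k)}1/v$ established in Section~\ref{sec2} and interchanging the order of summation. Each $v\le x$ contributes $1/v$ to $A_{n}(k)$ for exactly those $n$ with $v\le n\le x$ and $n\equiv v\ (\mathrm{mod}\ k)$, of which there are $\lfloor (x-v)/k\rfloor+1$. Hence
\[
\sum_{n\le x}A_{n}(k)=\sum_{v\le x}\frac{1}{v}\left(\left\lfloor\frac{x-v}{k}\right\rfloor+1\right).
\]
Writing $\lfloor (x-v)/k\rfloor=(x-v)/k-\{(x-v)/k\}$ separates the sum into a smooth main part and a fractional-part correction $S(x)=\sum_{v\le x}v^{-1}\{(x-v)/k\}$.

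First I would dispose of the main part. The terms $\tfrac{x-v}{k}+1$ give $\tfrac{x}{k}H_{\lfloor x\rfloor}-\tfrac{\lfloor x\rfloor}{k}+H_{\lfloor x\rfloor}$, and feeding in $H_{M}=\log M+\gamma+O(1/M)$ together with $\log\lfloor x\rfloor=\log x+O(1/x)$ produces the announced leading behaviour $\tfrac{x}{k}\log x+\tfrac{x}{k}(\gamma-1)$, while the remaining contribution is a genuine constant (here $\gamma+\tfrac{1}{2k}$) plus a term of size $O(\log x)$ that is swallowed by the error.

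The heart of the matter is the correction $S(x)$, and this is where I expect the real work to lie. Since $\{(x-v)/k\}$ depends only on the residue of $v$ modulo $k$, I would group the sum according to the $k$ residue classes and evaluate, for each $j\in\{1,\dots,k\}$, the progression sum $\sum_{v\le x,\,v\equiv j\,(k)}1/v=\tfrac{1}{k}\big(\log x-\log k+\gamma(0,j/k)\big)+o(1)$, the constant being exactly the generalized Stieltjes constant $\gamma(0,j/k)=-\psi(j/k)$ attached to the arithmetic progression. Assembling these over the classes introduces the linear combination $\tfrac{1}{k}\sum_{j=1}^{k}\gamma(0,j/k)$ and the weighted combination $\tfrac{1}{k^{2}}\sum_{j=1}^{k}j\,\gamma(0,j/k)$; the former I would collapse by Gauss's multiplication theorem $\sum_{j=1}^{k}\psi(j/k)=-k(\gamma+\log k)$, whereas the latter does not simplify and survives in $C_{k}$.

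Collecting the constant contributions from the main part and from $S(x)$, and discarding the several $O(\log x)$ (and bounded, residue-dependent) remainders into the error term, assembles the closed form $C_{k}$ recorded in (\ref{Ck}). The main obstacle is the precise bookkeeping of the constant in $S(x)$: one must carry the constants produced by the arithmetic-progression harmonic sums through the residue decomposition and through the multiplication formula, and make sure that only the genuinely constant part is retained in $C_{k}$, the logarithmic and the bounded oscillating parts being absorbed by the $O(\log x)$ term.
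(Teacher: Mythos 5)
Your argument is correct and establishes Lemma \ref{lemAnk} by a route that differs from the paper's mainly in bookkeeping. Both proofs start from the representation (\ref{Ank}) and interchange the order of summation; the paper then restricts to $x=kN$, counts the terms of each progression block by block, and reduces everything to $H_{kN}$, $H_N$ and the progression harmonic numbers $H_{N-1}(m/k)$, whereas you keep $x$ general, write the multiplicity of $1/v$ as $\lfloor (x-v)/k\rfloor+1$, and split the floor into a linear main part plus the fractional-part sum $S(x)$. Your main-part computation is right (and the exact cancellation of the $\{x\}$ terms, which makes $\gamma+\tfrac{1}{2k}$ a genuine constant rather than an oscillating one, is a nice touch), and your treatment of $S(x)$ via the progression sums, $\gamma(0,j/k)=-\psi(j/k)$ and the Gauss multiplication formula is the same essential ingredient as the paper's use of $H_{N-1}(m/k)$, just packaged differently.

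Two caveats. First, the one claim you should qualify is that $\{(x-v)/k\}$ ``depends only on the residue of $v$ modulo $k$'': it also depends on $x$ through $x\bmod k$, so the coefficients $c_j(x)=\{(x-j)/k\}$ oscillate with $x$, and the ``constant'' part of $S(x)$ is only well defined along $x$ restricted to a fixed residue class (e.g.\ $x=kN$, where $c_j=(k-j)/k$, which is exactly what produces the weight $j/k^{2}$ in front of $\gamma(0,j/k)$). This is harmless here because that entire contribution is $O(1)\subseteq O(\log x)$. Second, and for the same reason, the closed form (\ref{Ck}) is not actually pinned down by the statement of the lemma: with an $O(\log x)$ error term, any constant can be absorbed, so the only real content is the pair of main terms $\tfrac{x}{k}\log x+\tfrac{x}{k}(\gamma-1)$, which your argument delivers cleanly. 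You therefore need not agonize over whether your bookkeeping reproduces (\ref{Ck}) exactly (a careful run of your computation along $x=kN$ in fact yields a slightly different constant than the paper's); nothing downstream depends on it, since $C_k$ cancels in the proof of Theorem \ref{mteo2}.
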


\begin{proof}
It follows from (\ref{Ank}) that%
\begin{align*}
\sum_{n=1}^{kN}A_{n}\left(  k\right)   &  =\sum_{m=1}^{kN}\frac{1}{m}%
\sum_{n=m}^{kN}f_{n}\left(  m\right) \\
&  =\sum_{j=1}^{N}\sum_{m=\left(  j-1\right)  k+1}^{jk}\frac{1}{m}\left(
\sum_{v=1}^{N-j}\sum_{n=m+\left(  v-1\right)  k}^{m+vk-1}f_{n}\left(
m\right)  +f_{kN}\left(  m\right)  \right)  .
\end{align*}
Since $f_{n}\left(  m\right)  =\left\{
\begin{array}
[c]{cc}%
1, & n=m+\left(  v-1\right)  k\equiv m\left(  \operatorname{mod}k\right) \\
0, & m+\left(  v-1\right)  k<n\leq m+vk-1
\end{array}
\right.  $ we arrive at\
\begin{align*}
\sum_{n=1}^{kN}A_{n}\left(  k\right)   &  =\sum_{j=1}^{N}\sum_{m=\left(
j-1\right)  k+1}^{jk}\frac{1}{m}\left(  N-j+f_{kN}\left(  m\right)  \right) \\
&  =NH_{kN}-\sum_{j=1}^{N}j\sum_{m=1}^{k}\frac{1}{m+\left(  j-1\right)
k}+\frac{H_{N}}{k}.
\end{align*}
Here the double sum can be evaluated as%
\begin{align*}
\sum_{j=1}^{N}\sum_{m=1}^{k}\frac{j}{m+\left(  j-1\right)  k}  &  =H_{kN}%
+\sum_{j=0}^{N-1}\sum_{m=1}^{k}\frac{j}{m+jk}\\
&  =H_{kN}+N-\frac{1}{k}\sum_{j=0}^{N-1}\sum_{m=1}^{k}\frac{m}{m+jk}\\
&  =H_{kN}+N-\frac{1}{k}\sum_{m=1}^{k}\frac{m}{k}H_{N-1}\left(  \frac{m}%
{k}\right),
\end{align*}
which gives
\[
\sum_{n=1}^{kN}A_{n}\left(  k\right)  =NH_{kN}+\frac{H_{N}}{k}-H_{kN}%
-N+\frac{1}{k}\sum_{m=1}^{k}\frac{m}{k}H_{N-1}\left(  \frac{m}{k}\right)  .
\]
We now use the well-known identities%
\[
H_{N}=\log N+\gamma+\frac{1}{2N}+O\left(  \frac{1}{N^{2}}\right)  ,\text{ as
}N\rightarrow\infty
\]
and
\[
H_{N}\left(  a\right)  =\log\left(  N+a\right)  +\gamma\left(  0,a\right)
+O\left(  \frac{1}{N+a}\right)  ,\text{ as }N\rightarrow\infty.
\]
Hence we deduce that%
\begin{align*}
\sum_{n=1}^{kN}A_{n}\left(  k\right)   &  =N\log kN+N\left(  \gamma-1\right)
+\frac{3-k}{2k}\log\left(  N\right) \\
&  +\frac{1}{2}+\left(  \frac{1}{k}-1\right)  \gamma-\log k+\frac{1}{k}%
\sum_{m=1}^{k}\frac{m}{k}\gamma\left(  0,\frac{m}{k}\right)  +O\left(
\frac{1}{N}\right)  ,
\end{align*}
which implies
\begin{align*}
\sum_{n\leq x}A_{n}\left(  k\right)   &  =\frac{x}{k}\log\left(  x\right)
+\frac{x}{k}\left(  \gamma-1\right) \\
&  +\frac{1}{2}+\left(  \frac{1}{k}-1\right)  \gamma-\frac{3+k}{2k}\log
k+\frac{1}{k^{2}}\sum_{j=1}^{k}j\gamma\left(  0,\frac{j}{k}\right)  +O\left(
\log x\right)  .
\end{align*}

\end{proof}

According to Lemma \ref{lemAnk} we set%
\begin{align}
E\left(  x,k\right)   &  =\sum_{n\leq x}A_{n}\left(  k\right)  -\frac{x}%
{k}\log x-\frac{x}{k}\left(  \gamma-1\right)  -C_{k}\nonumber\\
&  =O\left(  \log x\right)  . \label{5}%
\end{align}

\begin{lemma}
\label{lem3}Let
\[
f\left(  s,k\right)  =s\int\limits_{1}^{\infty}t^{-s-1}E\left(  t,k\right)
dt.
\]
Then, for $\operatorname{Re}\left(  s\right)  >0$,
\[
f\left(  s,k\right)  =\frac{1}{k}-\frac{\gamma}{k}-C_{k}+\sum_{m=0}^{\infty
}\left(  -1\right)  ^{m}\frac{\gamma_{A\left(  k\right)  }\left(  m\right)
}{m!}\left(  s-1\right)  ^{m},
\]
where $C_{k}$ is given by (\ref{Ck}).
\end{lemma}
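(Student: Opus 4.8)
The plan is to relate $f(s,k)$ to $\zeta_{A(k)}(s)$ by inserting the definition (\ref{5}) of $E(t,k)$ into the integral representation of Lemma \ref{lem2}, computing the resulting elementary integrals, and then reading off the Laurent expansion at $s=1$ from (\ref{azetaL}).

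First I would start from Lemma \ref{lem2}, which for $\operatorname{Re}(s)>1$ gives $\zeta_{A(k)}(s)=s\int_1^\infty t^{-s-1}\left(\sum_{n\le t}A_n(k)\right)dt$. Substituting $\sum_{n\le t}A_n(k)=E(t,k)+\frac{t}{k}\log t+\frac{t}{k}(\gamma-1)+C_k$ from (\ref{5}) and splitting the integral, I would obtain
\[
\zeta_{A(k)}(s)=f(s,k)+\frac{s}{k}\int_1^\infty t^{-s}\log t\,dt+\frac{s(\gamma-1)}{k}\int_1^\infty t^{-s}\,dt+sC_k\int_1^\infty t^{-s-1}\,dt.
\]
The three elementary integrals evaluate (for $\operatorname{Re}(s)>1$) to $\frac{1}{(s-1)^2}$, $\frac{1}{s-1}$ and $\frac{1}{s}$ respectively, so that
\[
f(s,k)=\zeta_{A(k)}(s)-\frac{s}{k(s-1)^2}-\frac{s(\gamma-1)}{k(s-1)}-C_k.
\]

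The main point to be careful about is the domain of validity. The integrals for the $\frac{t}{k}\log t$ and $\frac{t}{k}(\gamma-1)$ pieces converge only for $\operatorname{Re}(s)>1$, so the displayed identity is first established there. However, the estimate $E(t,k)=O(\log t)$ from (\ref{5}) guarantees that the defining integral for $f(s,k)$ converges, and hence that $f(s,k)$ is analytic, throughout $\operatorname{Re}(s)>0$. Since the right-hand side above is meromorphic on $\operatorname{Re}(s)>0$, the identity extends to all of $\operatorname{Re}(s)>0$ by analytic continuation.

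It remains to expand at $s=1$. Writing $s=1+(s-1)$ gives $\frac{s}{k(s-1)^2}=\frac{1}{k(s-1)^2}+\frac{1}{k(s-1)}$ and $\frac{s(\gamma-1)}{k(s-1)}=\frac{\gamma-1}{k(s-1)}+\frac{\gamma-1}{k}$, so the subtracted terms contribute a principal part $\frac{1}{k(s-1)^2}+\frac{\gamma}{k(s-1)}$ together with the constant $\frac{\gamma-1}{k}+C_k$. Inserting the Laurent expansion (\ref{azetaL}) of $\zeta_{A(k)}(s)$, the double-pole and simple-pole terms cancel exactly against this principal part (a consistency check confirming analyticity of $f$ at $s=1$), and the constant $-\frac{\gamma-1}{k}$ combines to $\frac{1}{k}-\frac{\gamma}{k}$. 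This leaves precisely
\[
f(s,k)=\frac{1}{k}-\frac{\gamma}{k}-C_k+\sum_{m=0}^\infty(-1)^m\frac{\gamma_{A(k)}(m)}{m!}(s-1)^m,
\]
as claimed.
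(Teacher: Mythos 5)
Your proof is correct and follows essentially the same route as the paper: substitute the decomposition (\ref{5}) into Lemma \ref{lem2}, evaluate the three elementary integrals, and cancel the principal parts against the Laurent expansion (\ref{azetaL}). Your explicit remark on extending the identity from $\operatorname{Re}(s)>1$ to $\operatorname{Re}(s)>0$ via the $O(\log t)$ bound is a welcome point of care that the paper only states implicitly.
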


\begin{proof}
For $\operatorname{Re}\left(  s\right)  >0$ the integral is an analytic
function. Moreover, for $\operatorname{Re}\left(  s\right)  >1$, it follows
from (\ref{5}) and Lemma \ref{lem2} that%
\begin{align*}
s\int\limits_{1}^{\infty}t^{-s-1}E\left(  t,k\right)  dt  &  =\frac
{1/k}{\left(  s-1\right)  ^{2}}+\frac{\gamma/k}{s-1}-\frac{s}{k}\frac
{1}{\left(  s-1\right)  ^{2}}-\frac{\gamma-1}{k}\frac{s}{s-1}\\
&  -C_{k}+\sum_{m=0}^{\infty}\frac{\left(  -1\right)  ^{m}\gamma_{A\left(
k\right)  }\left(  m\right)  }{m!}\left(  s-1\right)  ^{m}%
\end{align*}
which gives the desired conclusion.
\end{proof}

\begin{lemma}
\label{lem4}Let $m$ be a non-negative integer and let $u<0$. Then,%
\begin{align*}
\sum_{n\leq x}A_{n}\left(  k\right)  n^{u}\log^{m}n  &  =\frac{1}{k}%
\int\limits_{1}^{x}\left(  t^{u}\log^{m+1}t+\gamma t^{u}\log^{m}t\right)
dt+\left(  -1\right)  ^{m}f^{\left(  m\right)  }\left(  -u,k\right) \\
&  +\left(  C_{k}+\frac{\gamma-1}{k}\right)  \left\{
\begin{array}
[c]{cc}%
1, & m=0\\
0, & m>0
\end{array}
\right.  +O\left(  x^{u}\log^{m+1}x\right)
\end{align*}
where $f\left(  s,k\right)  $ is the function introduced in Lemma \ref{lem3},
and $C_{k}$ is given by (\ref{Ck}).
\end{lemma}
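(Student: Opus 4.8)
The plan is to apply the Abel summation formula of Lemma~\ref{lem1} with $b_n=A_n(k)$ and $v(t)=t^u\log^m t$, taking the lower limit $\alpha=1$, which gives
\[
\sum_{n\le x}A_n(k)\,n^u\log^m n=S(x)\,x^u\log^m x-\int_1^x S(t)\,v'(t)\,dt,
\]
where $S(t)=\sum_{n\le t}A_n(k)$. I would then insert the decomposition $S(t)=M(t)+C_k+E(t,k)$ supplied by Lemma~\ref{lemAnk} and (\ref{5}), in which $M(t)=\tfrac{t}{k}\log t+\tfrac{t}{k}(\gamma-1)$ is the main term and $E(t,k)=O(\log t)$. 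The algebraic key is that $M'(t)=\tfrac{1}{k}(\log t+\gamma)$, so that $M'(t)v(t)=\tfrac1k\bigl(t^u\log^{m+1}t+\gamma\,t^u\log^m t\bigr)$ reproduces exactly the integrand of the target integral.

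Next I would handle the contribution of $M$ to the Abel integral by parts, $\int_1^x M(t)v'(t)\,dt=M(x)v(x)-M(1)v(1)-\tfrac1k\int_1^x\bigl(t^u\log^{m+1}t+\gamma\,t^u\log^m t\bigr)\,dt$, and the constant contribution directly, $\int_1^x C_k\,v'(t)\,dt=C_k\bigl(v(x)-v(1)\bigr)$. Since $S(x)v(x)=M(x)v(x)+C_kv(x)+E(x,k)v(x)$, the boundary terms $M(x)v(x)$ and $C_kv(x)$ cancel, leaving $E(x,k)v(x)=O(x^u\log^{m+1}x)$ inside the error and the surviving lower-limit values $\bigl(M(1)+C_k\bigr)v(1)$. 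Because $v(1)=1^u\log^m 1$ equals $1$ for $m=0$ and $0$ for $m>0$, while $M(1)=\tfrac{\gamma-1}{k}$, these produce precisely the indicator term $\bigl(C_k+\tfrac{\gamma-1}{k}\bigr)$ present only when $m=0$.

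It then remains to match $-\int_1^x E(t,k)v'(t)\,dt$ with $(-1)^m f^{(m)}(-u,k)$, which I expect to be the main obstacle. I would split $-\int_1^x=-\int_1^\infty+\int_x^\infty$ and bound the tail through the elementary estimate $\int_x^\infty t^{u-1}\log^{m+1}t\,dt=O(x^u\log^{m+1}x)$, valid since $u<0$. For the remaining integral over $[1,\infty)$ I would use $v'(t)=u\,t^{u-1}\log^m t+m\,t^{u-1}\log^{m-1}t$ and differentiate $f(s,k)=s\int_1^\infty t^{-s-1}E(t,k)\,dt$ under the integral sign, which is legitimate because $f$ is analytic on $\operatorname{Re}(s)>0$ by Lemma~\ref{lem3}. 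Writing $f(s,k)=s\,g(s)$ with $g(s)=\int_1^\infty t^{-s-1}E(t,k)\,dt$, Leibniz gives $f^{(m)}(s)=s\,g^{(m)}(s)+m\,g^{(m-1)}(s)$, while $g^{(j)}(s)=(-1)^j\int_1^\infty t^{-s-1}\log^j t\,E(t,k)\,dt$; evaluating at $s=-u$ then yields exactly $(-1)^m f^{(m)}(-u,k)=-\int_1^\infty E(t,k)v'(t)\,dt$. Assembling the clean integral, the indicator term, and the $O(x^u\log^{m+1}x)$ pieces gives the claimed identity.
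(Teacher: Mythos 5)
Your proof is correct and follows essentially the same route as the paper: Abel summation with $b_n=A_n(k)$ and $v(t)=t^u\log^m t$, the decomposition of $\sum_{n\le t}A_n(k)$ supplied by Lemma \ref{lemAnk} and (\ref{5}), the tail estimate $\int_x^\infty t^{u-1}\log^{m+1}t\,dt=O(x^u\log^{m+1}x)$, and the identification of the $E$-integral with $(-1)^m f^{(m)}(-u,k)$. The only difference is bookkeeping: the paper pulls derivatives outside via the identity $\frac{d}{dt}\left(t^u\log^m t\right)=\frac{d^m}{du^m}\left(ut^{u-1}\right)$ (the Briggs--Buschman device), whereas you integrate the main term by parts directly, exploiting that $M'(t)v(t)$ is exactly the target integrand, and compute $f^{(m)}(-u,k)$ by the Leibniz rule applied to $f(s,k)=s\,g(s)$; both computations check out and yield the same terms.
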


\begin{proof}
Set $v\left(  x\right)  =x^{u}\log^{m}x$ and $b_{n}=A_{n}\left(  k\right)  $
in Lemma \ref{lem1}. Then%
\begin{equation}
\sum_{n\leq x}A_{n}\left(  k\right)  n^{u}\log^{m}n=x^{u}\left(  \log
^{m}x\right)  \sum_{n\leq x}A_{n}\left(  k\right)  -\int\limits_{1}^{x}%
\sum_{n\leq t}A_{n}\left(  k\right)  \frac{d}{dt}\left(  t^{u}\log
^{m}t\right)  dt. \label{9}%
\end{equation}
The RHS of (\ref{9}) can be written as%
\[
RHS=\frac{L_{1}}{k}+\frac{\gamma-1}{k}L_{2}-\frac{d^{m}}{du^{m}}%
u\int\limits_{1}^{x}t^{u-1}E\left(  t,k\right)  dt+C_{k}\frac{d^{m}}{du^{m}%
}1+O\left(  x^{u}\log^{m+1}x\right)
\]
upon the use of%
\[
\frac{d}{dt}\left(  t^{u}\log^{m}t\right)  =\frac{d^{m}}{du^{m}}\left(
ut^{u-1}\right)  .
\]
Here
\[
L_{1}=x^{u+1}\log^{m+1}x-\frac{d^{m}}{du^{m}}u\int\limits_{1}^{x}t^{u}\log
tdt\text{ and }L_{2}=x^{u+1}\log^{m}x-\frac{d^{m}}{du^{m}}u\int\limits_{1}%
^{x}t^{u}dt
\]
and $\dfrac{d^{m}}{du^{m}}1=\left\{
\begin{array}
[c]{cc}%
1, & m=0\\
0, & m>0
\end{array}
\right.  $. It is easily seen that
\[
L_{1}=\int\limits_{1}^{x}t^{u}\log^{m+1}tdt+\int\limits_{1}^{x}t^{u}\log
^{m}tdt\text{ and }L_{2}=\int\limits_{1}^{x}t^{u}\log^{m}tdt+\frac{d^{m}%
}{du^{m}}1.
\]
These give%
\begin{align*}
\sum_{n\leq x}A_{n}\left(  k\right)  n^{u}\log^{m}n  &  =\frac{1}{k}%
\int\limits_{1}^{x}\left(  t^{u}\log^{m+1}t+\gamma t^{u}\log^{m}t\right)
dt+\left(  -1\right)  ^{m}f^{\left(  m\right)  }\left(  -u,k\right) \\
&  +\left(  \frac{\gamma-1}{k}+C_{k}\right)  \frac{d^{m}}{du^{m}}1+O\left(
x^{u}\log^{m+1}x\right)  ,
\end{align*}
the desired result.
\end{proof}

We are now ready to prove Theorem \ref{mteo2}.

\begin{proof}
[Proof of Theorem \ref{mteo2}]From Lemma \ref{lem3} we have%
\begin{equation}
f^{\left(  m\right)  }\left(  1,k\right)  =\left(  -1\right)  ^{m}%
\gamma_{A\left(  k\right)  }\left(  m\right)  \text{ for }m>0\text{ and
}f\left(  1,k\right)  =\frac{1-\gamma}{k}-C_{k}+\gamma_{A\left(  k\right)
}\left(  0\right)  . \label{14}%
\end{equation}
Setting $u=-1$ in Lemma \ref{lem4} yields to%
\begin{align}
\sum_{n\leq x}\frac{A_{n}\left(  k\right)  \log^{m}n}{n}  &  =\frac{1}{k}%
\frac{\log^{m+2}x}{m+2}+\frac{\gamma}{k}\frac{\log^{m+1}x}{m+1}+\left(
-1\right)  ^{m}f^{\left(  m\right)  }\left(  1,k\right) \nonumber\\
&  +\left(  C_{k}+\frac{\gamma-1}{k}\right)  \frac{d^{m}}{du^{m}}1+O\left(
x^{u}\log^{m+1}x\right)  . \label{15}%
\end{align}
Hence (\ref{14}) and (\ref{15}) lead to%
\[
\gamma_{A\left(  k\right)  }\left(  0\right)  =\lim_{x\rightarrow\infty
}\left(  \sum_{n\leq x}\frac{A_{n}\left(  k\right)  }{n}-\frac{1}{k}\frac
{\log^{2}x}{2}-\frac{\gamma}{k}\log x\right)
\]
and
\[
\gamma_{A\left(  k\right)  }\left(  m\right)  =\lim_{x\rightarrow\infty
}\left(  \sum_{n\leq x}\frac{A_{n}\left(  k\right)  \log^{m}n}{n}-\frac{1}%
{k}\frac{\log^{m+2}x}{m+2}-\frac{\gamma}{k}\frac{\log^{m+1}x}{m+1}\right)  .
\]

\end{proof}

\section{Two evaluation formulas for $\gamma_{H}\left(  n,1/2\right)
$\label{sec5}}

In \cite{KDCC}, we have presented a formula for the harmonic
Stieltjes constant $\gamma_{H}\left(  m,1/2\right)  $. Here we offer two other
evaluation formulas. The first one is a consequence of (\ref{hgRaabe}):

\begin{corollary}
\label{ghm1b2-1}For a non-negative integer $m$ we have%
\begin{align}
\gamma_{H}\left(  m,1/2\right)   &  =-\gamma_{H}\left(  m\right)  +\left(
-1\right)  ^{m}\left(  \frac{\ln2}{n+2}+\gamma\right)  \frac{2\ln^{m+1}2}%
{m+1}\nonumber\\
&  +4\sum\limits_{j=0}^{m}\binom{m}{j}\left(  -1\right)  ^{m-j}\gamma
_{A\left(  2\right)  }\left(  j\right)  \ln^{m-j}\left(  2\right)  .
\label{ev2a}%
\end{align}

\end{corollary}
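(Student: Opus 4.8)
The plan is to specialize the multiplication formula (\ref{hgRaabe}) to $k=2$ and then solve for the single term $\gamma_{H}(m,1/2)$. When $k=2$, the argument $a/k$ runs over the two values $1/2$ and $2/2=1$ as $a$ ranges over $1,2$, so the left-hand side of (\ref{hgRaabe}) becomes $\gamma_{H}(m,1/2)+\gamma_{H}(m,1)$, while the right-hand side is exactly the combination of the $\ln 2$--term and the $\gamma_{A(2)}(j)$--sum displayed in (\ref{ev2a}). Thus the entire content of the corollary reduces to identifying $\gamma_{H}(m,1)$.

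The one fact I would establish separately is the identity $\gamma_{H}(m,1)=\gamma_{H}(m)$. Starting from the definition (\ref{hhz}), one has $H_{n}(1)=\sum_{k=0}^{n}1/(k+1)=H_{n+1}$, whence the reindexing $\zeta_{H}(s,1)=\sum_{n=0}^{\infty}H_{n+1}/(n+1)^{s}=\sum_{m=1}^{\infty}H_{m}/m^{s}=\zeta_{H}(s)$. Since $\zeta_{H}(s,1)$ and $\zeta_{H}(s)$ coincide as meromorphic functions, their Laurent expansions about $s=1$ agree coefficient by coefficient; noting that $\gamma(0,1)=\gamma$, a term-by-term comparison of (\ref{hzetaL}) at $a=1$ with the defining expansion of $\gamma_{H}(m)$ gives $\gamma_{H}(m,1)=\gamma_{H}(m)$ for every $m\geq 0$.

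Substituting $\gamma_{H}(m,1)=\gamma_{H}(m)$ into the $k=2$ instance of (\ref{hgRaabe}) and transferring this term to the right-hand side then yields (\ref{ev2a}) directly, with no further manipulation required.

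There is no substantive obstacle in this argument, as it is a straightforward specialization and rearrangement of an already-proved multiplication formula. The only step warranting care is the reindexing that produces $\gamma_{H}(m,1)=\gamma_{H}(m)$, which hinges on the precise offset in the summation defining $H_{n}(a)$ in (\ref{hhz}); once that shift is verified, the corollary follows immediately.
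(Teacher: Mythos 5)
Your proposal is correct and matches the paper's (implicit) argument: the paper states this corollary as a direct consequence of the multiplication formula (\ref{hgRaabe}) specialized to $k=2$, using $\gamma_{H}(m,1)=\gamma_{H}(m)$ exactly as you do. The only cosmetic remark is that the $n$ in the factor $\frac{\ln 2}{n+2}$ of (\ref{ev2a}) is a typo for $m$, which your derivation correctly produces as $\frac{\ln 2}{m+2}$.
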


The second one is as follows:

\begin{proposition}
\label{ghm1b2-2}For a non-negative integer $m$ we have%
\begin{align}
\gamma_{H}\left(  m,1/2\right)   &  =-\gamma_{H}\left(  m\right)  +\left(
-1\right)  ^{m}\left(  \frac{\log2}{m+2}+\gamma\right)  \frac{2\log^{m+1}%
2}{\left(  m+1\right)  }\nonumber\\
&  +2\left(  -1\right)  ^{m}\sum_{j=0}^{m}\binom{m}{j}\left(  \left(
-1\right)  ^{j}\gamma_{H}\left(  j\right)  -\sum\limits_{v=0}^{j}\binom{j}%
{v}g_{j-v}J_{v}\right)  \left(  \log^{m-j}2\right)  , \label{ev2b}%
\end{align}
where $g_{m}$ is given by (\ref{gm-im}) and
\[
J_{m}=\int_{0}^{\infty}\frac{\log\left(  1-e^{-x}\right)  }{1+e^{-x}}\log
^{m}xdx.
\]

\end{proposition}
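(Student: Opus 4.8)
The plan is to reduce the proposition to a single functional identity relating the three functions named in the statement and then to read off the coefficient of $(s-1)^m$. First I would assemble the integral representations coming from the proof of Proposition \ref{prop-raabe}. Evaluating the Lerch transcendent in closed form, $e^{-x}\Phi(e^{-x},1;1)=-\log(1-e^{-x})$ and $e^{-x/2}\Phi(e^{-x},1;\tfrac12)=\log\frac{1+e^{-x/2}}{1-e^{-x/2}}$, the representation $\zeta_H(s,a)=\frac{1}{\Gamma(s)}\int_0^\infty\frac{x^{s-1}}{1-e^{-x}}e^{-xa}\Phi(e^{-x},1;a)\,dx$ gives
\[
\zeta_H(s)=-\frac{1}{\Gamma(s)}\int_0^\infty\frac{x^{s-1}\log(1-e^{-x})}{1-e^{-x}}\,dx,\quad \zeta_H(s,\tfrac12)=\frac{1}{\Gamma(s)}\int_0^\infty\frac{x^{s-1}}{1-e^{-x}}\log\frac{1+e^{-x/2}}{1-e^{-x/2}}\,dx.
\]
Likewise, the generating function $\sum_{n\ge1}H_n^-z^n=\log(1+z)/(1-z)$ together with $1/n^s=\Gamma(s)^{-1}\int_0^\infty x^{s-1}e^{-nx}\,dx$ yields
\[
\eta_{H^-}(s)=-\frac{1}{\Gamma(s)}\int_0^\infty\frac{x^{s-1}\log(1-e^{-x})}{1+e^{-x}}\,dx,
\]
valid for $\operatorname{Re}(s)>1$.

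Next I would establish the key identity. Using $\log\frac{1+e^{-x/2}}{1-e^{-x/2}}=\log(1-e^{-x})-2\log(1-e^{-x/2})$ splits $\zeta_H(s,\tfrac12)$ into $-\zeta_H(s)$ plus $-\frac{2}{\Gamma(s)}\int_0^\infty\frac{x^{s-1}\log(1-e^{-x/2})}{1-e^{-x}}\,dx$. In the latter I would substitute $x\mapsto2x$ and apply $\frac{1}{1-e^{-2x}}=\frac12\bigl(\frac{1}{1-e^{-x}}+\frac{1}{1+e^{-x}}\bigr)$; the two pieces are precisely $-\Gamma(s)\zeta_H(s)$ and $-\Gamma(s)\eta_{H^-}(s)$, so that
\[
\zeta_H(s,\tfrac12)=-\zeta_H(s)+2^s\bigl(\zeta_H(s)+\eta_{H^-}(s)\bigr).
\]
As a check, taking residues at $s=1$ reproduces the correct residue $\gamma(0,\tfrac12)=\gamma+2\log2$.

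I would then expand both sides about $s=1$ and match coefficients. Inserting $x^{s-1}=\sum_{v\ge0}\frac{\log^vx}{v!}(s-1)^v$ into the $\eta_{H^-}$ integral and multiplying by $1/\Gamma(s)=\sum_{m\ge0}\frac{g_m}{m!}(s-1)^m$ produces the Taylor expansion $\eta_{H^-}(s)=-\sum_{m\ge0}\frac{1}{m!}\bigl(\sum_{v=0}^m\binom{m}{v}g_{m-v}J_v\bigr)(s-1)^m$, which is exactly where the constants $g_m$ of (\ref{gm-im}) and the integrals $J_v$ enter. Feeding in the Laurent expansion (\ref{hzetaL}) of $\zeta_H(s,\tfrac12)$ and of $\zeta_H(s)$, and writing $2^s=2\sum_{i\ge0}\frac{\log^i2}{i!}(s-1)^i$, I would read off the $(s-1)^m$--coefficient on the right. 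The summand $-\zeta_H(s)$ contributes $-\gamma_H(m)$; the product of the expansion of $2^s$ with the principal part $\frac{1}{(s-1)^2}+\frac{\gamma}{s-1}$ of $\zeta_H(s)+\eta_{H^-}(s)$ contributes the explicit term $(-1)^m\bigl(\frac{\log2}{m+2}+\gamma\bigr)\frac{2\log^{m+1}2}{m+1}$; and the product of $2^s$ with the regular part yields the Cauchy-type sum $2(-1)^m\sum_{j=0}^m\binom{m}{j}\bigl((-1)^j\gamma_H(j)-\sum_{v=0}^j\binom{j}{v}g_{j-v}J_v\bigr)\log^{m-j}2$. Clearing the common factor $(-1)^m/m!$ gives (\ref{ev2b}).

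The hard part will be the second step: obtaining the clean relationship $\zeta_H(s,\tfrac12)=-\zeta_H(s)+2^s(\zeta_H(s)+\eta_{H^-}(s))$ hinges on the closed-form evaluation of $\Phi(e^{-x},1;\tfrac12)$ and on carrying out the $x\mapsto2x$ rescaling together with the partial-fraction split without sign slips, after which one must justify the termwise integration and confirm that all representations hold for $\operatorname{Re}(s)>1$. Once the identity is in hand, the remaining coefficient extraction is essentially bookkeeping, the only subtlety being the interaction of the factor $2^s$ with the double pole of $\zeta_H(s)$, which is precisely what manufactures the extra powers of $\log2$ in the final closed form.
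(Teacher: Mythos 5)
Your proposal is correct and its overall architecture coincides with the paper's: both hinge on the identity $\zeta_{H}\left(s,\tfrac12\right)=\left(2^{s}-1\right)\zeta_{H}\left(s\right)+2^{s}\eta_{H^{-}}\left(s\right)$ (equation (\ref{3})) followed by the same Laurent-coefficient extraction at $s=1$, including the identification $\eta_{H^{-}}^{\left(j\right)}\left(1\right)=-\sum_{v=0}^{j}\binom{j}{v}g_{j-v}J_{v}$ via the integral representation (\ref{11}). The only genuine difference is how (\ref{3}) is obtained: the paper proves the more general duplication relation $\zeta_{H}\left(s,a\right)+\zeta_{H}\left(s,a+\tfrac12\right)=2^{s}\left\{\zeta_{H}\left(s,2a\right)+\eta_{H^{-}}\left(s,2a\right)\right\}$ from an identity for the Lerch transcendent together with $\zeta_{H}\left(s,a+1\right)=\zeta_{H}\left(s,a\right)-\tfrac{1}{a}\zeta\left(s,a\right)$, and then sets $a=\tfrac12$; you instead evaluate $\Phi\left(e^{-x},1;1\right)$ and $\Phi\left(e^{-x},1;\tfrac12\right)$ in logarithmic closed form and reach the $a=\tfrac12$ case directly by the splitting $\log\frac{1+e^{-x/2}}{1-e^{-x/2}}=\log\left(1-e^{-x}\right)-2\log\left(1-e^{-x/2}\right)$, the rescaling $x\mapsto 2x$, and the partial fraction $\frac{1}{1-e^{-2x}}=\frac12\bigl(\frac{1}{1-e^{-x}}+\frac{1}{1+e^{-x}}\bigr)$. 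Your route is more elementary and self-contained (it avoids introducing $\eta_{H^{-}}\left(s,a\right)$ for general $a$), at the cost of losing the general-$a$ relation that the paper gets for free; your residue check at $s=1$ and the bookkeeping of the $2^{s}$ factor against the double pole are both correct.
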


\begin{proof}
We exploit the equation%
\[
\zeta_{H}\left(  s,a\right)  =\frac{1}{\Gamma\left(  s\right)  }\int%
_{0}^{\infty}\frac{x^{s-1}}{1-e^{-x}}e^{-xa}\Phi\left(  e^{-x},1;a\right)  dx
\]
(\cite[Eq. (14)]{KDCC}) as%
\begin{align*}
&  \zeta_{H}\left(  s,a+1\right)  +\zeta_{H}\left(  s,a+\frac{1}{2}\right)  \\
&  =\frac{1}{\Gamma\left(  s\right)  }\int_{0}^{\infty}\frac{x^{s-1}}%
{1-e^{-x}}\left(  e^{-x\left(  a+1\right)  }\Phi\left(  e^{-x},1;a+1\right)
+e^{-x\left(  a+1/2\right)  }\Phi\left(  e^{-x},1;a+1/2\right)  \right)  dx.
\end{align*}
The equality%
\[
\frac{\Phi\left(  e^{-x},1;a+1\right)  }{e^{x\left(  a+1\right)  }}+\frac
{\Phi\left(  e^{-x},1;a+1/2\right)  }{e^{x\left(  a+1/2\right)  }}=2\frac
{\Phi\left(  e^{-x/2},1;2a\right)  }{e^{2a\left(  x/2\right)  }}-\frac
{e^{-ax}}{a}%
\]
gives rise to%
\begin{align*}
&  \zeta_{H}\left(  s,a+1\right)  +\zeta_{H}\left(  s,a+\frac{1}{2}\right)  \\
&  =\frac{2^{s}}{\Gamma\left(  s\right)  }\int_{0}^{\infty}\frac{\left(
x/2\right)  ^{s-1}}{1-e^{-x/2}}e^{-2a\left(  x/2\right)  }\Phi\left(
e^{-x/2},1;2a\right)  d\left(  \frac{x}{2}\right)  \\
&  +\frac{2^{s}}{\Gamma\left(  s\right)  }\int_{0}^{\infty}\frac{\left(
x/2\right)  ^{s-1}e^{-2a\left(  x/2\right)  }}{1+e^{-x/2}}\Phi\left(
e^{-x/2},1;2a\right)  d\left(  \frac{x}{2}\right)  -\frac{1}{a\Gamma\left(
s\right)  }\int_{0}^{\infty}\frac{x^{s-1}e^{-ax}}{1-e^{-x}}dx.
\end{align*}

It is obvious from the generating function
\[
\sum_{n=0}^{\infty}H_{n}^{^{-}}\left(  a\right)  t^{n}=\frac{\Phi\left(
-t,1;a\right)  }{1-t}\text{, where }H_{n}^{^{-}}\left(  a\right)  =\sum
_{j=0}^{n}\frac{\left(  -1\right)  ^{j}}{j+a}\text{,}%
\]
that%
\begin{equation}
\frac{1}{\Gamma\left(  s\right)  }\int_{0}^{\infty}\frac{x^{s-1}e^{-ax}%
}{1+e^{-x}}\Phi\left(  e^{-x},1;a\right)  dx=\sum_{n=0}^{\infty}\frac{\left(
-1\right)  ^{n}H_{n}^{-}\left(  a\right)  }{\left(  n+a\right)  ^{s}}%
=:\eta_{H^{-}}\left(  s,a\right)  . \label{11}%
\end{equation}
Thus we arrive at
\begin{equation}
\zeta_{H}\left(  s,a+1\right)  +\zeta_{H}\left(  s,a+\frac{1}{2}\right)
=2^{s}\left\{  \zeta_{H}\left(  s,2a\right)  +\eta_{H^{-}}\left(  s,2a\right)
\right\}  -\frac{1}{a}\zeta\left(  s,a\right)  . \label{12}%
\end{equation}
Since%
\begin{equation}
\zeta_{H}\left(  s,a+1\right)  =\zeta_{H}\left(  s,a\right)  -\frac{1}{a}%
\zeta\left(  s,a\right)  \label{16}%
\end{equation}
(\ref{12}) becomes%
\[
\zeta_{H}\left(  s,a\right)  +\zeta_{H}\left(  s,a+\frac{1}{2}\right)
=2^{s}\left\{  \zeta_{H}\left(  s,2a\right)  +\eta_{H^{-}}\left(  s,2a\right)
\right\}  .
\]
In particular, for $a=1/2$, we have%
\begin{equation}
\zeta_{H}\left(  s,\frac{1}{2}\right)  =\left(  2^{s}-1\right)  \zeta
_{H}\left(  s\right)  +2^{s}\eta_{H^{-}}\left(  s\right)  , \label{3}%
\end{equation}
where we have used that $\zeta_{H}\left(  s,1\right)  =\zeta_{H}\left(
s\right)  $ and $\eta_{H^{-}}\left(  s,1\right)  =\eta_{H^{-}}\left(
s\right)  $. Making use of (\ref{hzetaL}) and
\[
\eta_{H^{-}}\left(  s\right)  =\sum_{m=0}^{\infty}\frac{\eta_{H^{-}}^{\left(
m\right)  }\left(  1\right)  }{m!}\left(  s-1\right)  ^{m}\text{ }%
\]
we deduce that
\begin{align*}
\sum_{m=0}^{\infty}\left(  -1\right)  ^{m}\frac{\gamma_{H}\left(
m,1/2\right)  }{m!}\left(  s-1\right)  ^{m} &  =2\sum_{m=0}^{\infty}\left(  \frac{\log^{m+2}2}{\left(  m+2\right)
!}+\gamma\frac{\log^{m+1}2}{\left(  m+1\right)  !}-\left(  -1\right)
^{m}\frac{\gamma_{H}\left(  m\right)  }{m!}\right)  \left(  s-1\right)  ^{m}\\
&  +2\sum_{m=0}^{\infty}\sum_{j=0}^{m}\binom{m}{j}\left\{  \left(  -1\right)
^{j}\gamma_{H}\left(  j\right)  -\eta_{H^{-}}^{\left(  j\right)  }\left(
1\right)  \left(  \log^{m-j}2\right)  \right\}  \frac{\left(  s-1\right)
^{m}}{m!},
\end{align*}
which entails
\begin{align*}
\gamma_{H}\left(  m,1/2\right)   &  =-\gamma_{H}\left(  m\right)  +\left(
-1\right)  ^{m}\left(  \frac{\log2}{m+2}+\gamma\right)  \frac{2\log^{m+1}%
2}{\left(  m+1\right)  }\\
&  +2\left(  -1\right)  ^{m}\sum_{j=0}^{m}\binom{m}{j}\left(  \left(
-1\right)  ^{j}\gamma_{H}\left(  j\right)  +\eta_{H^{-}}^{\left(  j\right)
}\left(  1\right)  \right)  \left(  \log^{m-j}2\right)  .
\end{align*}
The assertion%
\[
\eta_{H^{-}}^{\left(  j\right)  }\left(  1\right)  =-\sum\limits_{v=0}%
^{j}\binom{j}{v}g_{j-v}J_{v}%
\]
follows from (\ref{11}).
\end{proof}

Recall the formula recorded in \cite[Proposition 12]{KDCC}:%
\begin{align}
\gamma_{H}\left(  m,1/2\right)    =\gamma_{H}\left(  m\right)  +2\left(
-1\right)  ^{m}\frac{\log^{m+1}2}{m+1}
 +2\sum_{v=0}^{m}\binom{m}{v}\left(  -\log2\right)  ^{m-v}\left(
\gamma_{\widetilde{H}}\left(  v\right)  +\gamma_{H^{^{-}}}\left(  v\right)
\right) ,\label{8a}%
\end{align}
where $\gamma_{\widetilde{H}}\left(  m\right)  $ and $\gamma_{H^{^{-}}}\left(
m\right)  $ are defined by
\[%
\begin{array}
[c]{c}%
\eta_{H}\left(  s\right)  =%
{\displaystyle\sum\limits_{n=1}^{\infty}}
\left(  -1\right)  ^{n-1}\dfrac{H_{n}}{n^{s}}=%
{\displaystyle\sum\limits_{m=0}^{\infty}}
\left(  -1\right)  ^{m}\dfrac{\gamma_{\widetilde{H}}\left(  m\right)  }%
{m!}\left(  s-1\right)  ^{m},\medskip\\
\zeta_{H^{-}}\left(  s\right)  =%
{\displaystyle\sum\limits_{n=1}^{\infty}}
\dfrac{H_{n}^{^{-}}}{n^{s}}=\dfrac{\log2}{s-1}+%
{\displaystyle\sum\limits_{m=0}^{\infty}}
\left(  -1\right)  ^{m}\dfrac{\gamma_{H^{^{-}}}\left(  m\right)  }{m!}\left(
s-1\right)  ^{m}.
\end{array}
\]
Employing the binomial transform, formula (\ref{8a}) can equivalently be
written as\textbf{ }%
\begin{align}
\gamma_{H^{^{-}}}\left(  m\right)   &  =-\gamma_{\widetilde{H}}\left(
m\right)  +\frac{1}{2}\sum_{v=0}^{m}\binom{m}{v}\left(  \log2\right)
^{m-v}\nonumber\\
&  \qquad\times\left(  \gamma_{H}\left(  v,1/2\right)  -\gamma_{H}\left(
v\right)  -2\left(  -1\right)  ^{v}\frac{\log^{v+1}2}{v+1}\right)  .\label{8b}%
\end{align}
Since the constants that occur on the RHS are computable ($\gamma_{H}\left(
k,1/2\right)  $ by Corollary \ref{ghm1b2-1}, $\gamma_{H}\left(  k\right)  $ by
Corollary \ref{ghm}, $\gamma_{\widetilde{H}}\left(  m\right)  $ by
\cite[Proposition 14]{KDCC}), (\ref{8b}) can be used to calculate
$\gamma_{H^{^{-}}}\left(  m\right)  $.

\section{Proof of Theorem \ref{mteo3} and its consequences\label{sec6}}

In this section, we first prove Theorem \ref{mteo3} and then present some
results for the harmonic zeta functions $\zeta_{O}\left(  s\right)  $ and
$S_{1,s}^{++}\left(  0,1/2\right)  $.

\begin{proof}
[Proof of Theorem \ref{mteo3}]Let $\operatorname{Re}\left(  s\right)  >1$.
Then,%
\begin{align*}
\eta_{H}\left(  s,a\right)   =\sum_{n=0}^{\infty}\frac{\left(  -1\right)
^{n}H_{n}\left(  a\right)  }{\left(  n+a\right)  ^{s}}
  =2^{1-s}\sum_{n=1}^{\infty}\frac{H_{2n}\left(  a\right)  }{\left(
n+a/2\right)  ^{s}}+\frac{2}{a^{s+1}}-\zeta_{H}\left(  s,a\right)  .
\end{align*}
We now use the facts
\[
H_{2n}\left(  a\right)  =\mathcal{H}_{2n}\left(  a\right)  +\frac{1}{a}%
\]
and
\[
\sum_{n=1}^{\infty}\frac{\mathcal{H}_{2n}\left(  a\right)  }{\left(
n+b\right)  ^{s}}=\frac{1}{2}S_{1,s}^{++}\left(  \frac{a-1}{2},b\right)
+\frac{1}{2}S_{1,s}^{++}\left(  \frac{a}{2},b\right)
\]
(see \cite[Eq. (2.19)]{SoC}). We then find that
\[
\eta_{H}\left(  s,a\right)  =2^{-s}S_{1,s}^{++}\left(  \frac{a-1}{2},\frac
{a}{2}\right)  +2^{-s}S_{1,s}^{++}\left(  \frac{a}{2},\frac{a}{2}\right)
+\frac{2^{1-s}}{a}\zeta\left(  s,\frac{a}{2}\right)  -\zeta_{H}\left(
s,a\right)
\]
The equality
\begin{equation}
S_{1,s}^{++}\left(  a,b\right)  =\sum_{n=1}^{\infty}\frac{\mathcal{H}%
_{n}\left(  a\right)  }{\left(  n+b\right)  ^{s}}=\sum_{n=0}^{\infty}%
\frac{H_{n}\left(  a+1\right)  }{\left(  n+b+1\right)  ^{s}}=\zeta_{H}\left(
s,a+1,b+1\right)  \label{17}%
\end{equation}
and (\ref{16}) yield%
\begin{equation}
\zeta_{H}\left(  s,\frac{a+1}{2},\frac{a+1}{2}+\frac{1}{2}\right)
=2^{s}\left(  \eta_{H}\left(  s,a\right)  +\zeta_{H}\left(  s,a\right)
\right)  -\zeta_{H}\left(  s,\frac{a}{2}\right)  . \label{18}%
\end{equation}

Then the assertion (\ref{19}) follows from (\ref{18}) with $a=1$ since
$\zeta_{H}\left(  s,1\right)  =\zeta_{H}\left(  s\right)  $, $\eta_{H}\left(
s,1\right)  =\eta_{H}\left(  s\right)  $ and%
\[
\zeta_{H}\left(  s,1,\frac{1}{2}+1\right)  =\sum_{n=0}^{\infty}\frac
{H_{n}\left(  1\right)  }{\left(  n+\frac{1}{2}+1\right)  ^{s}}=\sum
_{n=1}^{\infty}\frac{H_{n}}{\left(  n+\frac{1}{2}\right)  ^{s}}=S_{1,s}%
^{++}\left(  0,\frac{1}{2}\right)  .
\]

For (\ref{ozeta}) we set $a=2$ in (\ref{18}). The LHS becomes%
\begin{align*}
\zeta_{H}\left(  s,1+\frac{1}{2},1+1\right)   &  =S_{1,s}^{++}\left(  \frac
{1}{2},1\right)  =\sum_{n=1}^{\infty}\frac{\mathcal{H}_{n}\left(  \frac{1}%
{2}\right)  }{\left(  n+1\right)  ^{s}}\\
&  =2\sum_{n=2}^{\infty}\frac{1}{n^{s}}\sum_{j=2}^{n}\frac{1}{2j-1}%
=2\sum_{n=1}^{\infty}\frac{O_{n}}{n^{s}}-2\zeta\left(  s\right)  ,
\end{align*}
upon the use of (\ref{17}). While the RHS becomes%
\[
2^{s}\left(  \zeta_{H}\left(  s\right)  -\eta_{H}\left(  s\right)
+\eta\left(  s\right)  -\zeta\left(  s\right)  \right)  -\zeta_{H}\left(
s\right)
\]
since
\[
\zeta_{H}\left(  s,2\right)  \overset{\text{(\ref{16})}}{=}\zeta_{H}\left(  s\right)
-\zeta\left(  s\right)
\text{ and }
\eta_{H}\left(  s,2\right) =\eta\left(  s\right)
-\eta_{H}\left(  s\right)  .
\]
Hence%
\begin{align*}
S_{1,s}^{++}\left(  \frac{1}{2},1\right)   =2\sum_{n=1}^{\infty}\frac
{O_{n}}{n^{s}}-2\zeta\left(  s\right) =2^{s}\left(  \zeta_{H}\left(  s\right)  -\eta_{H}\left(  s\right)
+\eta\left(  s\right)  -\zeta\left(  s\right)  \right)  -\zeta_{H}\left(
s\right)
\end{align*}
or%
\[
\zeta_{O}\left(  s\right)  =2^{s-1}\left(  \zeta_{H}\left(  s\right)
-\eta_{H}\left(  s\right)  \right)  -\frac{1}{2}\zeta_{H}\left(  s\right)  ,
\]
upon the use of $\eta\left(  s\right)  =\left(  1-2^{1-s}\right)  \zeta\left(
s\right)  $.
\end{proof}

We now order some consequences of Theorem \ref{mteo3}. By appealing the
Laurent/Taylor expansions of $\zeta_{H}\left(  s\right)  $, $\eta_{H}\left(
s\right)  $ and $\zeta_{H}\left(  s,1/2\right)  $ in a neighborhood of $s=1$,
one can obtain the following Laurent expansions:

\begin{corollary}
\label{ozetaL}In a neighborhood of $s=1$ we have%
\[
\zeta_{O}\left(  s\right)  =\frac{1}{2}\frac{1}{\left(  s-1\right)  ^{2}%
}+\frac{\log2+\gamma/2}{\left(  s-1\right)  }+\sum\limits_{n=0}^{\infty
}\left(  -1\right)  ^{n}\frac{\gamma_{O}\left(  n\right)  }{n!}\left(
s-1\right)  ^{n},
\]
where the coefficients $\gamma_{O}\left(  n\right)  $ can be determined by%
\begin{align}
\gamma_{O}\left(  n\right)   &  =\left(  -1\right)  ^{n}\left(  \frac{\log
2}{n+2}+\gamma\right)  \frac{\log^{n+1}2}{n+1}-\frac{1}{2}\gamma_{H}\left(
n\right)  \nonumber\\
&  +\left(  -1\right)  ^{n}\sum_{v=0}^{n}\binom{n}{v}\left(  \left(
-1\right)  ^{v}\gamma_{H}\left(  v\right)  -\eta_{H}^{\left(  v\right)
}\left(  1\right)  \right)  \log^{n-v}2.\label{gammao}%
\end{align}

\end{corollary}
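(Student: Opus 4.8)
The plan is to substitute the Laurent expansion of $\zeta_H(s)$ from (\ref{hzetaL}) with $a=1$, the Taylor expansion of $\eta_H(s)$ at $s=1$ (which is regular there, with coefficients $\eta_H^{(m)}(1)/m!$), and the elementary power series
\[
2^s = 2\sum_{k=0}^{\infty}\frac{\log^k 2}{k!}(s-1)^k,\qquad 2^{s-1}=\sum_{k=0}^{\infty}\frac{\log^k 2}{k!}(s-1)^k
\]
directly into the identity (\ref{ozeta}). Writing $w=s-1$ and $E=e^{w\log 2}=\sum_{k\ge 0}\frac{\log^k 2}{k!}w^k$, it is convenient first to observe that $\frac{1}{2}(2^s-1)=E-\frac{1}{2}$, so that (\ref{ozeta}) takes the compact form
\[
\zeta_O(s)=E\bigl(\zeta_H(s)-\eta_H(s)\bigr)-\tfrac{1}{2}\zeta_H(s).
\]

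Next I would carry out the Cauchy products. The factor $\zeta_H(s)-\eta_H(s)$ carries the double pole $w^{-2}$ and the simple pole $\gamma w^{-1}$ inherited from $\zeta_H$, so multiplying by $E$ produces, from $E\cdot w^{-2}$ and $E\cdot\gamma w^{-1}$, the singular contributions $w^{-2}$ and $(\log 2+\gamma)w^{-1}$ together with a regular tail. Combining these with the singular part $-\frac{1}{2}(w^{-2}+\gamma w^{-1})$ coming from $-\frac{1}{2}\zeta_H$ gives leading coefficient $1-\frac{1}{2}=\frac{1}{2}$ for $w^{-2}$ and $\log 2+\gamma-\frac{\gamma}{2}=\log 2+\frac{\gamma}{2}$ for $w^{-1}$, which is precisely the asserted principal part.

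Finally I would read off the coefficient of $w^{n}$ in the regular part. The two pole-times-series pieces $E\cdot w^{-2}$ and $E\cdot\gamma w^{-1}$ contribute $\frac{\log^{n+2}2}{(n+2)!}$ and $\gamma\frac{\log^{n+1}2}{(n+1)!}$ respectively; the product of $E$ with the regular parts of $\zeta_H-\eta_H$ contributes $\sum_{v=0}^{n}\frac{\log^{n-v}2}{(n-v)!}\bigl((-1)^v\frac{\gamma_H(v)}{v!}-\frac{\eta_H^{(v)}(1)}{v!}\bigr)$; and $-\frac{1}{2}\zeta_H$ contributes $-\frac{1}{2}(-1)^n\frac{\gamma_H(n)}{n!}$. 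Equating this total with $(-1)^n\gamma_O(n)/n!$ and multiplying through by $(-1)^n n!$ turns $n!/((n-v)!\,v!)$ into $\binom{n}{v}$ and merges the first two pieces into $(-1)^n\bigl(\frac{\log 2}{n+2}+\gamma\bigr)\frac{\log^{n+1}2}{n+1}$, which yields (\ref{gammao}).

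The only real obstacle is the bookkeeping in the Cauchy products, specifically tracking how the $w^{-2}$ and $w^{-1}$ factors shift the summation index in $E$ and verifying that the binomial coefficients assemble correctly after multiplying by $n!$. No analytic subtlety arises, since all the expansions involved converge in the punctured disc $0<|s-1|<1$, so the manipulation is a legitimate rearrangement of absolutely convergent series there.
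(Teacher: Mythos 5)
Your proposal is correct and follows essentially the same route the paper indicates: substitute the Laurent expansion (\ref{hzetaL}) of $\zeta_H(s)$ and the Taylor expansion of $\eta_H(s)$ at $s=1$ into (\ref{ozeta}) and collect coefficients via Cauchy products; your bookkeeping of the principal part and of the coefficient of $(s-1)^n$ reproduces (\ref{gammao}) exactly.
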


\begin{corollary}
\label{dnL}In a neighborhood of $s=1$ we have%
\[
S_{1,s}^{++}\left(  0,\frac{1}{2}\right)  =\frac{1}{\left(  s-1\right)  ^{2}%
}+\frac{\gamma}{s-1}+\sum\limits_{n=0}^{\infty}\left(  -1\right)  ^{n}%
\frac{d_{n}}{n!}\left(  s-1\right)  ^{n},
\]
where the coefficients $d_{n}$ can be determined by%
\begin{align}
d_{n} &  =-\gamma_{H}\left(  n,1/2\right)  +2\left(  -1\right)  ^{n}\left(
\frac{\log2}{n+2}+\gamma\right)  \frac{\log^{n+1}2}{n+1}
\nonumber\\
&  +2\left(  -1\right)  ^{n}\sum_{v=0}^{n}\binom{n}{v}\left(  \left(
-1\right)  ^{v}\gamma_{H}\left(  v\right)  +\eta_{H}^{\left(  v\right)
}\left(  1\right)  \right)  \log^{n-v}2.\label{dn}%
\end{align}

\end{corollary}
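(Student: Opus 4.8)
The plan is to substitute the known local expansions of the three constituents on the right-hand side of \eqref{19} and read off the Laurent coefficients of $S_{1,s}^{++}(0,1/2)$; this parallels the derivation of Corollary \ref{ozetaL}. First I would record the four ingredients near $s=1$: from the Introduction, $\zeta_{H}(s)=\frac{1}{(s-1)^{2}}+\frac{\gamma}{s-1}+\sum_{m\geq0}(-1)^{m}\frac{\gamma_{H}(m)}{m!}(s-1)^{m}$; since $\eta_{H}$ is holomorphic at $s=1$ it has the Taylor series $\eta_{H}(s)=\sum_{m\geq0}\frac{\eta_{H}^{(m)}(1)}{m!}(s-1)^{m}$; the expansion \eqref{hzetaL} with $a=1/2$ gives $\zeta_{H}(s,1/2)=\frac{1}{(s-1)^{2}}+\frac{\gamma(0,1/2)}{s-1}+\sum_{n\geq0}(-1)^{n}\frac{\gamma_{H}(n,1/2)}{n!}(s-1)^{n}$, where $\gamma(0,1/2)=-\psi(1/2)=\gamma+2\log2$; and finally $2^{s}=2\sum_{k\geq0}\frac{\log^{k}2}{k!}(s-1)^{k}$.

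Then I would form the product $2^{s}\bigl(\eta_{H}(s)+\zeta_{H}(s)\bigr)$ by Cauchy multiplication, being careful with the two index shifts that arise when $2^{s}$ multiplies the $(s-1)^{-2}$ and $(s-1)^{-1}$ terms. Writing $\frac{2^{s}}{(s-1)^{2}}=\frac{2}{(s-1)^{2}}+\frac{2\log2}{s-1}+\sum_{m\geq0}\frac{2\log^{m+2}2}{(m+2)!}(s-1)^{m}$ and $\frac{\gamma\,2^{s}}{s-1}=\frac{2\gamma}{s-1}+\sum_{m\geq0}\frac{2\gamma\log^{m+1}2}{(m+1)!}(s-1)^{m}$, the principal part of $2^{s}(\eta_{H}+\zeta_{H})$ is $\frac{2}{(s-1)^{2}}+\frac{2(\log2+\gamma)}{s-1}$. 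Subtracting the principal part of $\zeta_{H}(s,1/2)$, namely $\frac{1}{(s-1)^{2}}+\frac{\gamma+2\log2}{s-1}$, the double poles combine to $\frac{1}{(s-1)^{2}}$ and the simple poles to $\frac{\gamma}{s-1}$, which confirms the stated shape of the Laurent expansion.

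Matching the coefficient of $(s-1)^{n}$ then completes the proof. The analytic part of $2^{s}(\eta_{H}+\zeta_{H})$ contributes $\frac{2}{n!}\sum_{v=0}^{n}\binom{n}{v}\log^{n-v}2\,\bigl((-1)^{v}\gamma_{H}(v)+\eta_{H}^{(v)}(1)\bigr)$, the pole terms contribute $\frac{2\log^{n+2}2}{(n+2)!}+\frac{2\gamma\log^{n+1}2}{(n+1)!}$, and $-\zeta_{H}(s,1/2)$ contributes $-(-1)^{n}\frac{\gamma_{H}(n,1/2)}{n!}$. Equating the total to $(-1)^{n}\frac{d_{n}}{n!}$, multiplying through by $n!$, and regrouping $\frac{2\log^{n+2}2}{(n+1)(n+2)}+\frac{2\gamma\log^{n+1}2}{n+1}=2\bigl(\frac{\log2}{n+2}+\gamma\bigr)\frac{\log^{n+1}2}{n+1}$ yields \eqref{dn} exactly. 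I expect the only genuine obstacle to be disciplined bookkeeping of the two index shifts produced by multiplying $2^{s}$ into the double and simple poles, together with the identification $\gamma(0,1/2)=\gamma+2\log2$ needed for the principal parts to telescope to $\frac{1}{(s-1)^{2}}+\frac{\gamma}{s-1}$; no idea beyond \eqref{19} and the catalogued expansions is required.
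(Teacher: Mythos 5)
Your proposal is correct and is exactly the route the paper intends: it states that the corollary follows "by appealing the Laurent/Taylor expansions of $\zeta_{H}(s)$, $\eta_{H}(s)$ and $\zeta_{H}(s,1/2)$ in a neighborhood of $s=1$" in \eqref{19}, which is precisely your Cauchy-product computation. Your bookkeeping of the index shifts, the identification $\gamma(0,1/2)=-\psi(1/2)=\gamma+2\log 2$, and the final regrouping all check out.
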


Moreover, we observe from (\ref{19}) and (\ref{ozeta}) that the functions
$\zeta_{O}\left(  s\right)  $ and $S_{1,s}^{++}\left(  0,1/2\right)  $ have
simple poles at $s=1-2m$, $m\in\mathbb{N}$. Since
\[
\mathrm{Res}\left(  \zeta_{H}\left(  s\right)  ,s=1-2m\right)  =\mathrm{Res}%
\left(  \zeta_{H}\left(  s,1/2\right)  ,s=1-2m\right)  =\zeta\left(
1-2m\right)  ,
\]
we find that
\[
\mathrm{Res}\left(  S_{1,s}^{++}\left(  0,\frac{1}{2}\right)  ,s=1-2m\right)
=\left(  2^{1-2m}-1\right)  \zeta\left(  1-2m\right)
\]
and
\[
\mathrm{Res}\left(  \zeta_{O}\left(  s\right)  ,s=1-2m\right)  =\frac{1}%
{2}\left(  2^{1-2m}-1\right)  \zeta\left(  1-2m\right)  .
\]

Furthermore, we may evaluate the values at $s=-2m$, $m\in\mathbb{N\cup
}\left\{  0\right\}  $, of $\zeta_{O}\left(  s\right)  $ and $S_{1,s}%
^{++}\left(  0,1/2\right)  $.

\begin{corollary}
\label{cor0}We have%
\[
S_{1,0}^{++}\left(  0,\frac{1}{2}\right)  =1\text{ and }\zeta_{O}\left(
0\right)  =0.
\]

\end{corollary}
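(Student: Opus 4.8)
The plan is to read both special values off the two identities of Theorem~\ref{mteo3}: since \eqref{19} and \eqref{ozeta} hold for $\operatorname{Re}(s)>1$ and both sides continue meromorphically, they remain valid at $s=0$, so it suffices to expand each right-hand side about $s=0$ and isolate the finite part. Thus my first task is to record the local behaviour at $s=0$ of the three functions $\zeta_H$, $\eta_H$ and $\zeta_H(\cdot,1/2)$. The expansion of $\zeta_H(s)=\zeta_{A(1)}(s)$ is already available as the $k=1$ instance of the corollary giving the expansion in a neighbourhood of zero, which yields $\zeta_H(s)=\tfrac{1}{2s}+\bigl(\tfrac12+\tfrac{\gamma}{2}\bigr)+O(s)$, and in particular $\operatorname{Res}(\zeta_H,0)=\tfrac12$. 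For $\eta_H$, which is regular at $s=0$, I would compute $\eta_H(0)$ from its Mellin representation $\eta_H(s)=\tfrac{1}{\Gamma(s)}\int_0^{\infty}x^{s-1}\tfrac{\log(1+e^{-x})}{1+e^{-x}}\,dx$: the simple pole of the integral at $s=0$ has residue equal to the value at $x=0$ of $\tfrac{\log(1+e^{-x})}{1+e^{-x}}$, and this pole is cancelled by the zero of $1/\Gamma(s)$, leaving $\eta_H(0)=\tfrac{\log2}{2}$. Finally, for $\zeta_H(\cdot,1/2)$ I would use \eqref{3} to pass to $\zeta_H$ and $\eta_{H^{-}}$, the latter treated by the same Mellin device.

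For $\zeta_O(0)$ the computation through \eqref{ozeta} is the cleaner of the two. The prefactor $2^s-1$ vanishes at $s=0$, so the product $(2^s-1)\zeta_H(s)$ is regular there in spite of the pole of $\zeta_H$; evaluating it together with the regular term $2^{s-1}\eta_H(s)$ and subtracting gives $\zeta_O(0)=0$.

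The step I expect to be the real obstacle is $S_{1,0}^{++}(0,1/2)$ via \eqref{19}, because there both $2^s\zeta_H(s)$ and $\zeta_H(s,1/2)$ carry a genuine simple pole at $s=0$, and the finiteness asserted in the corollary is precisely the statement that these poles cancel. Hence the decisive point is to verify $\operatorname{Res}(\zeta_H,0)=\operatorname{Res}(\zeta_H(\cdot,1/2),0)$; once this is checked, the surviving constant terms---into which the expansion $2^s=1+s\log2+\cdots$ feeds each residue, producing contributions proportional to $\log2$---must be tracked carefully and combined with $\eta_H(0)$ to obtain the stated value. As an independent cross-check I would compute both numbers directly from the Mellin--Barnes representations, using that for a Dirichlet series $\tfrac{1}{\Gamma(s)}\int_0^{\infty}x^{s-1}\Theta(x)\,dx$ regular at the origin the value at $s=0$ equals the coefficient of $x^0$ in the small-$x$ expansion of $\Theta$; this reduces each assertion to a short Laurent computation at $x=0$ for $\Theta(x)=e^{-x/2}\tfrac{-\log(1-e^{-x})}{1-e^{-x}}$ and its $\zeta_O$-analogue.
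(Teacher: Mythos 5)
Your overall route is the same as the paper's: both read the two values off the identities (\ref{19}) and (\ref{ozeta}) continued to $s=0$, using $\eta_{H}(0)=\tfrac{1}{2}\log 2$ together with the local behaviour of $\zeta_{H}$ and $\zeta_{H}(\cdot,1/2)$ at $s=0$. The $\zeta_{O}(0)=0$ half of your argument is complete and correct. The other half, however, stops at a plan: the two computations you yourself flag as decisive --- that $\operatorname{Res}(\zeta_{H}(\cdot,1/2),0)=\operatorname{Res}(\zeta_{H},0)=\tfrac12$, and the tracking of the surviving constant terms --- are never carried out. Moreover, the tool you propose for them does not apply as stated. You want to reach $\zeta_{H}(s,1/2)$ through (\ref{3}) by treating $\eta_{H^{-}}$ ``by the same Mellin device'', but that device (value at $s=0$ equals the $x^{0}$-coefficient of the kernel) is valid only when the kernel has no $\log x$ term at the origin. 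The kernel of $\eta_{H^{-}}$ is $\frac{-\log(1-e^{-x})}{1+e^{-x}}=-\tfrac12\log x+O(x\log x)$, so $\eta_{H^{-}}$ in fact has a simple pole at $s=0$ with residue $\tfrac12$; it is precisely this pole, pushed through (\ref{3}), that gives $\operatorname{Res}(\zeta_{H}(\cdot,1/2),0)=\tfrac12$ and cancels the pole of $2^{s}\zeta_{H}(s)$ in (\ref{19}). As written, your plan would lead you to conclude that $\zeta_{H}(\cdot,1/2)$ is regular at $s=0$ and hence that the pole in (\ref{19}) does not cancel at all.

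A further warning about the endpoint. If you do complete the computation --- for instance via your own cross-check kernel $\Theta(x)=e^{-x/2}\frac{-\log(1-e^{-x})}{1-e^{-x}}=-\frac{\log x}{x}+\frac12+O(x\log x)$, whose $x^{0}$-coefficient is $\tfrac12$ with no accompanying $\log x$ term --- you obtain $S_{1,0}^{++}(0,\tfrac12)=\tfrac12$. The same value results from (\ref{19}) with the expansions $\zeta_{H}(s)=\frac{1}{2s}+\frac{1+\gamma}{2}+O(s)$, $\zeta_{H}(s,\tfrac12)=\frac{1}{2s}+\log 2+\frac{\gamma}{2}+O(s)$ and $\eta_{H}(0)=\frac{\log 2}{2}$, and it agrees with the $m=0$ specialization of $S_{1,-2m}^{++}(0,\tfrac12)=(1-2^{2m-1})B_{2m}/2^{2m}$. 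This does not match the stated value $1$; note that the paper's own evaluation invokes $\zeta(0,1/2)=1/2$, whereas $\zeta(0,a)=\tfrac12-a$ gives $\zeta(0,1/2)=0$. So you cannot close your argument by asserting that careful bookkeeping ``obtains the stated value'': you must either locate an error in the expansions above or conclude that the correct constant is $\tfrac12$.
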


\begin{proof}
The proof follows by evaluating the following limits:
\[
\lim_{s\rightarrow0}S_{1,s}^{++}\left(  0,\frac{1}{2}\right)  =\lim
_{s\rightarrow0}\left(  2^{s}\zeta_{H}\left(  s\right)  -\zeta_{H}\left(
s,\frac{1}{2}\right)  \right)  +\lim_{s\rightarrow0}2^{s}\eta_{H}\left(
s\right)
\]
and%
\[
\lim_{s\rightarrow0}\zeta_{O}\left(  s\right)  =\lim_{s\rightarrow0}\frac
{1}{2}\left(  2^{s}-1\right)  \zeta_{H}\left(  s\right)  -\lim_{s\rightarrow
0}2^{s-1}\eta_{H}\left(  s\right)  ,
\]
by utilizing that $\zeta\left(  0,1/2\right)  =1/2$, $\psi\left(  1/2\right)
=-2\log2-\gamma$ and $\eta_{H}\left(  0\right)  =\left(  \log2\right)  /2.$
\end{proof}

\begin{corollary}
For $m\in\mathbb{N}$ we have%
\[
S_{1,-2m}^{++}\left(  0,\frac{1}{2}\right)  =\left(  1-2^{2m-1}\right)
\frac{B_{2m}}{2^{2m}}%
\]
and%
\[
\zeta_{O}\left(  -2m\right)  =0.
\]

\end{corollary}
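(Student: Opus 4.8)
The plan is to evaluate both identities of Theorem \ref{mteo3}, namely (\ref{19}) and (\ref{ozeta}), at the point $s=-2m$, so that the whole problem reduces to computing the three ingredients $\zeta_H(-2m)$, $\zeta_H(-2m,1/2)$ and $\eta_H(-2m)$. Since $s=-2m$ avoids all the poles of these functions (which sit at $s=1$, $s=0$ and $s=1-2j$), every value involved is finite and the substitution is legitimate.

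Two of the three values come for free from the earlier results. Taking $k=1$ in the generating function (\ref{gAnk}) gives $A_n(1)=H_n$, hence $\zeta_{A(1)}=\zeta_H$, and the corollary evaluating $\zeta_{A(k)}(-2m)$ then yields $\zeta_H(-2m)=\left(\tfrac12-\tfrac1{4m}\right)B_{2m}=\tfrac{2m-1}{4m}B_{2m}$. Next, putting $k=2$ in the multiplication formula (\ref{Raabe}) and using $\zeta_H(s,1)=\zeta_H(s)$ gives $\zeta_H(-2m,1/2)=2^{1-2m}\zeta_{A(2)}(-2m)-\zeta_H(-2m)$; inserting $\zeta_{A(2)}(-2m)=\left(2^{2m-2}-\tfrac1{4m}\right)B_{2m}$ and simplifying the resulting powers of $2$ produces $\zeta_H(-2m,1/2)=\tfrac1{4m}\left(1-2^{1-2m}\right)B_{2m}$.

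The one genuinely new input, and the main obstacle, is $\eta_H(-2m)$. Here I would use that $\sum_n H_n t^n=-\log(1-t)/(1-t)$ with $t=-e^{-x}$ gives the representation $\eta_H(s)=\frac1{\Gamma(s)}\int_0^\infty x^{s-1}\frac{\log(1+e^{-x})}{1+e^{-x}}\,dx$ for $\operatorname{Re}(s)>0$. In contrast to the $\zeta_H$ case, the integrand $h(z)=\log(1+e^{-z})/(1+e^{-z})$ is analytic at $z=0$ (it tends to $\tfrac12\log2$), so the Hankel-contour argument used in the proof of Theorem \ref{mteo1} gives the clean evaluation $\eta_H(-n)=(-1)^n n!\,[z^n]h(z)$. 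Writing $\log(1+e^{-z})=\log2-\tfrac z2+\log\cosh(z/2)$ and $1/(1+e^{-z})=\tfrac12\bigl(1+\tanh(z/2)\bigr)$ and separating by parity, the even-index coefficient collapses to $[z^{2m}]h=\tfrac12[z^{2m}]\log\cosh(z/2)-\tfrac14[z^{2m-1}]\tanh(z/2)$. Feeding in the standard expansions of $\tanh(z/2)$ and $\log\cosh(z/2)$, whose $z^{2n-1}$- and $z^{2n}$-coefficients are $\frac{2(2^{2n}-1)B_{2n}}{(2n)!}$ and $\frac{(2^{2n}-1)B_{2n}}{2n(2n)!}$ respectively, I expect the coefficient to simplify to $[z^{2m}]h=\frac{(2^{2m}-1)B_{2m}}{(2m)!}\left(\tfrac1{4m}-\tfrac12\right)$, whence $\eta_H(-2m)=(2^{2m}-1)\left(\tfrac1{4m}-\tfrac12\right)B_{2m}=(1-2^{2m})\zeta_H(-2m)$. (Alternatively one may quote this value directly from the analytic continuation of $\eta_H$ established in \cite{BGP2}.)

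With the three values in hand the conclusion is a short substitution. Plugging $\eta_H(-2m)=(1-2^{2m})\zeta_H(-2m)$ into (\ref{ozeta}) makes the coefficient of $\zeta_H(-2m)$ equal to $\tfrac12(2^{-2m}-1)-2^{-2m-1}(1-2^{2m})=0$, so that $\zeta_O(-2m)=0$. Inserting all three values into (\ref{19}) gives $S_{1,-2m}^{++}(0,1/2)=\tfrac12\left(2^{1-2m}-1\right)B_{2m}$, and the elementary identity $\tfrac12\left(2^{1-2m}-1\right)=2^{-2m}-\tfrac12=(1-2^{2m-1})/2^{2m}$ casts this in the asserted form. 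The only real care needed throughout is bookkeeping the several powers of $2$; the vanishing of $\zeta_O(-2m)$ is precisely the cancellation one expects by analogy with the trivial zeros $\zeta(-2m)=0$ in the classical case.
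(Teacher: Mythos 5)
Your proof is correct, and at the top level it follows the same strategy as the paper: both evaluate the identities (\ref{19}) and (\ref{ozeta}) of Theorem \ref{mteo3} at $s=-2m$ and reduce the claim to the three values $\zeta_{H}\left(-2m\right)$, $\eta_{H}\left(-2m\right)$ and $\zeta_{H}\left(-2m,1/2\right)$. The difference is in how those inputs are justified. The paper simply cites them: $\zeta_{H}\left(-2m\right)$ from \cite{Ma} or \cite{BGP1}, $\eta_{H}\left(-2m\right)$ from \cite{BGP2}, and $\zeta_{H}\left(-2m,1/2\right)$ from \cite{KDCC}, together with $\zeta\left(-m,a\right)=-B_{m+1}\left(a\right)/(m+1)$ and $B_{m}\left(1/2\right)=\left(2^{1-m}-1\right)B_{m}$. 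You instead derive two of them from within the paper itself, namely $\zeta_{H}\left(-2m\right)=\left(\tfrac{1}{2}-\tfrac{1}{4m}\right)B_{2m}$ as the $k=1$ case of the corollary evaluating $\zeta_{A\left(k\right)}\left(-2m\right)$ (legitimate, since (\ref{gAnk}) with $k=1$ gives $A_{n}\left(1\right)=H_{n}$), and $\zeta_{H}\left(-2m,1/2\right)=\tfrac{1}{4m}\left(1-2^{1-2m}\right)B_{2m}$ from the $k=2$ case combined with Proposition \ref{prop-raabe}; and you compute $\eta_{H}\left(-2m\right)=\left(1-2^{2m}\right)\zeta_{H}\left(-2m\right)$ from scratch via the Mellin representation of $\eta_{H}$ and the parity decomposition of $\log\left(1+e^{-z}\right)/\left(1+e^{-z}\right)$. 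I checked your coefficients for $\tanh\left(z/2\right)$ and $\log\cosh\left(z/2\right)$ and the final power-of-two bookkeeping: the bracket in (\ref{19}) collapses to $2m\left(2^{1-2m}-1\right)$, giving $\tfrac{1}{2}\left(2^{1-2m}-1\right)B_{2m}=\left(1-2^{2m-1}\right)B_{2m}/2^{2m}$, and the coefficient of $\zeta_{H}\left(-2m\right)$ in (\ref{ozeta}) vanishes because $\tfrac{1}{2}\left(2^{-2m}-1\right)=2^{-2m-1}\left(1-2^{2m}\right)$, all as you state. Your route buys self-containedness (it leans only on results already proved in the paper plus the standard fact that for a Mellin integrand analytic at the origin with rapid decay the continuation at $s=-n$ equals $\left(-1\right)^{n}n!$ times the $n$-th Taylor coefficient), at the cost of redoing computations the paper is content to quote from \cite{Ma}, \cite{BGP1}, \cite{BGP2} and \cite{KDCC}.
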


\begin{proof}
The proof of the first assertion follows from (\ref{19}), \cite[p. 400]{Ma}
(or \cite[Eq. (16)]{BGP1}), \cite[Eq. (22)]{BGP2}, \cite[p. 10]{KDCC} and the
facts
\[
\zeta\left(  -m,a\right)  =-\frac{B_{m+1}\left(  a\right)  }{m+1}\text{ and
}B_{m}\left(  \frac{1}{2}\right)  =\left(  \frac{1}{2^{m-1}}-1\right)
B_{m},\text{ }m\geq0.
\]

The proof of the second assertion follows from (\ref{ozeta}) with \cite[p.
400]{Ma} (or \cite[Eq. (16)]{BGP1}) and \cite[Eq. (22)]{BGP2}.
\end{proof}

It is worth mentioning that Corollaries \ref{ozetaL}, \ref{dnL} and \ref{cor0}
give rise to
\[
\sum\limits_{n=0}^{\infty}\frac{d_{n}}{n!}=\gamma\text{  and  }\sum
\limits_{n=0}^{\infty}\frac{\gamma_{O}\left(  n\right)  }{n!}=\frac{\gamma
-1}{2}+\log2.
\]
Further, Corollaries \ref{ozetaL} and \ref{dnL} with $\zeta_{O}\left(
2\right)  =7\zeta\left(  3\right)  /4$ (cf. \cite[Eq. (8)]{J}) and
$S_{1,2}^{++}\left(  0,1/2\right)  =7\zeta\left(  3\right)  -\pi^{2}\ln2$ (cf.
\cite[Theorem 1]{SoCv}) entail
\begin{align*}
\sum\limits_{n=0}^{\infty}\left(  -1\right)  ^{n}\frac{\gamma_{O}\left(
n\right)  }{n!} &  =\frac{7}{4}\zeta\left(  3\right)  -\frac{1+\gamma}{2}%
-\log2,\\
\sum\limits_{n=0}^{\infty}\left(  -1\right)  ^{n}\frac{d_{n}}{n!} &
=7\zeta\left(  3\right)  -\gamma-\pi^{2}\ln2-1.
\end{align*}

\end{document}